\documentclass[11pt,reqno]{amsart}

\usepackage[a4paper]{geometry}
\usepackage{amsmath,amssymb,amsfonts,mathrsfs,mathtools}
\usepackage{enumitem}
\usepackage{array}
\usepackage{float}
\usepackage{microtype}
\usepackage{xcolor}
\usepackage[colorlinks=true,allcolors=blue]{hyperref}

\numberwithin{equation}{section}

\newtheorem{theorem}{Theorem}[section]
\newtheorem{lemma}{Lemma}[section]
\newtheorem{proposition}{Proposition}[section]
\newtheorem{corollary}[theorem]{Corollary}
\theoremstyle{definition}
\newtheorem{definition}{Definition}[section]

\theoremstyle{remark}
\newtheorem{remark}{Remark}[section]

\newcommand{\R}{\mathbb R}
\newcommand{\Z}{\mathbb Z}
\newcommand{\T}{\mathbb T}
\newcommand{\Eclass}{\mathcal E^{\{\mathcal M\}}}
\newcommand{\dd}{\,\mathrm d}
\newcommand{\Id}{\operatorname{Id}}
\newcommand{\tr}{\operatorname{tr}}
\newcommand{\Sym}{\operatorname{Sym}}

\title[Quasianalyticity and geometric rigidity]
{Quasianalyticity and geometric rigidity in anisotropic Calder\'on's problem}

\author{Lu Chen}
\address[Lu Chen]{Key Laboratory of Algebraic Lie Theory and Analysis, Ministry of Education, School of Mathematics and Statistics, Beijing Institute of Technology, Beijing 100081, PR China}
\email{chenlu@bit.edu.cn, chenlu5818804@163.com}

\author{Yan Jiang}
\address[Yan Jiang]{Department of Mathematics, City University of Hong Kong, Hong Kong SAR, China}
\email{yjian24@cityu.edu.hk}

\author{Hongyu Liu}
\address[Hongyu Liu]{Department of Mathematics, City University of Hong Kong, Hong Kong SAR, China}
\email{hongyu.liuip@gmail.com, hongyliu@cityu.edu.hk}

\author{Longyue Tao}
\address[Longyue Tao]{Department of Mathematics, City University of Hong Kong, Hong Kong SAR, China}
\email{sdyctly@163.com, longyue.tao@my.cityu.edu.hk}

\keywords{anisotropic Calder\'on problem, quasianalyticity, partial boundary data, product manifolds, Loewner order}
\subjclass[2020]{Primary 35R30; Secondary 58J32, 35J25, 26E10, 34A55}
\date{August 16, 2026}

\hypersetup{
  hypertexnames=false,
  pdfauthor={Lu Chen, Yan Jiang, Hongyu Liu, and Longyue Tao},
  pdftitle={Quasianalyticity and geometric rigidity in anisotropic Calderon's problem}
}

\begin{document}
\raggedbottom

\begin{abstract}
The anisotropic Calder\'on problem in dimensions $n\ge3$ remains open for general smooth metrics~\cite{Uhlmann2009}.
We establish uniqueness results in two complementary regimes.
In the first, the identity principle for quasianalytic functions propagates boundary information and yields uniqueness in general geometry, including a partial-boundary consequence; under a prescribed normal geometry, quasianalyticity is needed only in the distinguished direction.
In the second, suitable symmetry or one-sided ordering assumptions lead to uniqueness at $C^\infty$ regularity with full or restricted boundary access.
Taken together, the results exhibit a tradeoff among regularity, geometric structure, and boundary access: quasianalyticity supplies continuation in general geometry, while symmetry or one-sided order replaces that continuation at $C^\infty$ regularity.
\end{abstract}

\maketitle

\section{Introduction}\label{sec:introduction}

Let $(M,g)$ be a compact Riemannian manifold with boundary.
The anisotropic Calder\'on problem asks whether the boundary response of the Laplace--Beltrami equation determines the interior metric.
More precisely, the Dirichlet-to-Neumann (DN) map sends a boundary voltage $f$ to the outward normal derivative of its harmonic extension.
The problem has an unavoidable gauge invariance: if $F:M\to M$ is a diffeomorphism satisfying $F|_{\partial M}=\Id$, then $g$ and $F^*g$ have the same boundary measurements; this obstruction is already central in the early anisotropic theory~\cite{Sylvester1990}.
The natural uniqueness statement is therefore recovery up to a boundary-fixing diffeomorphism, unless an a priori coordinate gauge removes this freedom.
Calder\'on's foundational formulation~\cite{Calderon1980} initiated the modern inverse conductivity problem, and the global uniqueness theorem of Sylvester and Uhlmann~\cite{SylvesterUhlmann1987} established the basic higher-dimensional isotropic result.
The anisotropic problem is subtler precisely because of the diffeomorphism gauge; its general smooth form in dimensions $n\ge3$ remains a difficult open problem~\cite{Uhlmann2009}.
In dimension two, the anisotropic conductivity problem has a separate quasiconformal theory, including applications to partial boundary maps~\cite{AstalaLassasPaivarinta2005}.

A classical route to global rigidity combines boundary determination with analytic continuation.
For smooth metrics in boundary normal coordinates and with the boundary marking fixed, the full symbol of the DN map determines the complete Taylor series of the metric at the boundary.
Lee and Uhlmann used real analyticity to propagate this boundary identification into the interior and obtain uniqueness up to the natural gauge~\cite{LeeUhlmann1989}.
Lassas and Uhlmann subsequently removed the earlier topological restrictions and allowed measurements on an arbitrary nonempty open boundary portion in the real-analytic setting~\cite{LassasUhlmann2001}.
The boundary determination component was developed first for conductivities and then for anisotropic conductivities and related geometric operators~\cite{KohnVogelius1984,SylvesterUhlmann1988,NakamuraTanuma2001,KangYun2002,JoshiLionheart2005}.
Related analytic-geometric reconstruction results were obtained for complete Riemannian manifolds with boundary~\cite{LassasTaylorUhlmann2003}.
More recently, the Poisson embedding method supplied a geometric global candidate for the boundary-fixing isometry and reorganized the continuation argument through harmonic functions and Runge approximation~\cite{LassasLiimatainenSalo2020}.
Its formulation using boundary values supported in an arbitrary open set $\Gamma$ is the starting point for the partial-data corollary below.
These works show that analytic regularity provides a powerful mechanism for passing from boundary information to a global geometric identification.

There is a complementary line in which special geometric structure supplies the rigidity needed for global recovery.
Einstein metrics can be recovered from boundary Cauchy data by exploiting the elliptic structure of the Einstein equation~\cite{GuillarmouSaBarreto2009}.
On conformally transversally anisotropic manifolds, complex geometrical optics and Gaussian beam constructions relate uniqueness, linearized injectivity, and reconstruction to geodesic ray transforms on the transversal manifold~\cite{DosSantosFerreiraKenigSaloUhlmann2009,DosSantosFerreiraKurylevLassasSalo2016,DosSantosFerreiraKurylevLassasLiimatainenSalo2020,FeizmohammadiKrupchykOksanenUhlmann2021}.
Partial-data results on manifolds with suitable anisotropic geometry were developed by Kenig and Salo~\cite{KenigSalo2013}.
Separation of variables and inverse spectral theory give further anisotropic results for warped-product and conformally St\"ackel geometries~\cite{DaudeKamranNicoleau2020Warped,DaudeKamranNicoleau2021Stackel}.
Thus geometric constraints can convert the multidimensional inverse problem into an elliptic geometric equation, a ray transform, or a family of one-dimensional spectral problems.

Partial boundary measurements are especially delicate for anisotropic coefficients because the restricted measurements retain a diffeomorphism gauge tied to the observed boundary portion.
Throughout the paper, partial data on $\Gamma$ means that Dirichlet inputs are supported in a nonempty relatively open set $\Gamma\subset\partial M$ and the corresponding Neumann outputs are observed on that same set.
The natural gauge therefore fixes $\Gamma$, not necessarily the inaccessible boundary.
Foundational partial-data results for Schr\"odinger potentials and isotropic conductivities include~\cite{BukhgeimUhlmann2002,KenigSjostrandUhlmann2007,Isakov2007,ImanuvilovUhlmannYamamoto2010,KenigSalo2013}.
Relevant anisotropic precedents include the real-analytic Riemannian and two-dimensional elliptic theories~\cite{LassasUhlmann2001,ImanuvilovUhlmannYamamoto2011PNAS}, as well as local, piecewise, layered, and definiteness-based matrix-conductivity results~\cite{AlessandriniGaburro2009,AlessandriniDeHoopGaburro2017,AlessandriniDeHoopGaburroSincich2018,GardeJohanssonZacharopoulos2025}.
Cross data allow different input and observation sets, with disjoint sets as a special case~\cite{ImanuvilovUhlmannYamamoto2011Disjoint}.

Quasianalytic Denjoy--Carleman classes provide another natural regime between real analyticity and arbitrary smoothness.
They retain the identity principle that a function is determined by its jet at one point without requiring a convergent power-series representation.
They also retain the local calculus and elliptic regularity needed for the geometric argument~\cite{Komatsu1979,KrieglMichorRainer2011,RainerSchindl2014,Furdos2020,Thilliez2008}.
These properties explain why the analytic propagation step can be extended to the quasianalytic setting without relying on convergent power-series expansions.

The passage from real analyticity to quasianalytic regularity rests on the precise role of analyticity in the classical argument.
Boundary determination is a smooth pseudodifferential calculation: equality of the Dirichlet-to-Neumann maps determines the complete boundary jet of the metric in boundary normal coordinates without using analyticity.
Analyticity is used later to turn equality of the boundary jets into equality on a collar and to extend a local interior identification.
The Roumieu Denjoy--Carleman classes considered here retain the properties needed for these two steps: the quasianalytic identity principle, stability under differentiation, composition, inversion, and ordinary differential equations, and elliptic regularity for harmonic functions.
Consequently, under the stated stability and extension hypotheses, the classical real-analytic argument extends to a broader quasianalytic setting.
The obstruction in the general $C^\infty$ category is not a failure of these algebraic and differential closure properties, which smooth functions also possess, but the absence of the quasianalytic identity principle used in the two propagation steps.

The remaining results show how prescribed geometry or ordering structure can instead carry boundary information into the interior.
A global boundary normal form fixes the point correspondence along normal geodesics, tangential homogeneity enables a one-dimensional spectral reduction, and an ordered foliation allows equality to be stripped inward one layer at a time.
These mechanisms weaken or replace quasianalytic continuation in the structured settings considered below.

\subsection{Main results}\label{sec:main-results}

The results fall into two complementary groups: general-geometry rigidity under quasianalytic regularity, with both full and partial boundary access, and structured-geometry results in which quasianalyticity is weakened or replaced.
The assumptions of the two groups are not nested.

Throughout, $\Sym_d$ and $\Sym_d^+$ denote the real symmetric and real symmetric positive-definite $d\times d$ matrices.
For $A,B\in\Sym_d$, the Loewner order is defined by
\[
  A\succeq B
  \quad\Longleftrightarrow\quad
  \xi^T(A-B)\xi\ge0\quad\text{for every }\xi\in\R^d.
\]
We write $A\preceq B$ when $B\succeq A$.

\medskip
\noindent\textbf{The quasianalytic class.}
Let $n\ge3$, let $\mathcal M=(M_k)_{k\ge0}$ be a weight sequence, and put $\widehat M_k=M_k/k!$.
We assume
\begin{align*}
  &\widehat M_0=\widehat M_1=1,
  \qquad \sup_{k\ge1}\left(\frac{\widehat M_{k+1}}{\widehat M_k}\right)^{1/k}<\infty,
  \qquad \widehat M_k^2\le \widehat M_{k-1}\widehat M_{k+1}, \tag{A1}\label{eq:A1}\\
  &\lim_{k\to\infty}\widehat M_k^{1/k}=\infty, \tag{A2}\label{eq:A2}\\
  &M_{j+k}\le C_0R_0^{j+k}M_jM_k\qquad(j,k\ge0) \tag{A3}\label{eq:A3}
\end{align*}
for some $C_0,R_0>0$, together with the Denjoy--Carleman quasianalyticity condition
\begin{equation*}
  \sum_{k=1}^\infty\frac{M_{k-1}}{M_k}=\infty.
  \tag{A4}\label{eq:A4}
\end{equation*}
For an open set $U\subset\R^d$, let $\Eclass(U)$ consist of those $a\in C^\infty(U)$ such that for every $K\Subset U$ there are $C_K,R_K>0$ satisfying
\begin{equation}
  \sup_K|\partial^\alpha a|\le C_KR_K^{|\alpha|}M_{|\alpha|}
  \qquad\text{for every multiindex }\alpha.
  \label{eq:1}
\end{equation}

Conditions \eqref{eq:A1}--\eqref{eq:A2} make $\mathcal M$ a regular weight sequence in the $M_k/k!$ convention of~\cite{Furdos2020}, while \eqref{eq:A3} is the moderate-growth hypothesis used for the elliptic regularity below.
Condition \eqref{eq:A4} is the decisive quasianalytic condition: a function in $\Eclass$ is determined by its Taylor series at one point.
Equivalently, two $\Eclass$ functions that agree on a nonempty open subset of a connected domain agree throughout that domain.
This identity principle is shared with real-analytic functions, but suitable choices of $\mathcal M$ produce classes strictly larger than the real-analytic class.

For example, the explicit logarithmically convex sequence
\begin{equation}
  M_0=M_1=1,
  \qquad
  M_k=k!\prod_{\ell=2}^k[\log(\ell+e)]^\beta\quad(k\ge2),
  \qquad0<\beta\le1,
  \label{eq:8}
\end{equation}
satisfies \eqref{eq:A1}--\eqref{eq:A4}, as verified in Lemma~\ref{lem:nonanalytic-weight}, and its Roumieu class strictly contains the real-analytic class.
This strict inclusion is realized by nonanalytic metrics covered by Theorem~\ref{thm:global-quasianalytic}.
Indeed, fix $L,\varepsilon>0$, let $a\in\Eclass((-\varepsilon,L+\varepsilon))$ be nonanalytic, and let $(Y_0,g_0)$ be a closed real-analytic $(n-1)$-manifold.
For sufficiently small $\delta>0$, the metric
\[
  g_\delta=\dd t^2+(1+\delta a(t))g_0
  \quad\text{on }[0,L]\times Y_0
\]
is positive definite and belongs to $\Eclass$, but it is not real analytic.
Thus the first theorem applies to metrics outside the hypotheses of the classical analytic uniqueness results.

For Theorem~\ref{thm:global-quasianalytic}, let $M$ be a compact connected $\Eclass$ $n$-manifold with nonempty boundary, and let $g_1,g_2$ be positive definite metrics of class $\Eclass$.
Every boundary chart and every coordinate coefficient of $g_j$ is assumed to have an $\Eclass$ extension across the boundary, and the identity map between the marked boundaries has the same regularity.
For $f\in C^\infty(\partial M)$, let $u_j^f$ solve
\begin{equation}
  \Delta_{g_j}u_j^f=0\quad\text{in }M,
  \qquad u_j^f|_{\partial M}=f,
  \label{eq:2}
\end{equation}
Writing $\nu_{g_j}$ for the outward $g_j$-unit normal along $\partial M$, put $\Lambda_{g_j}f=\partial_{\nu_{g_j}}u_j^f|_{\partial M}$, and let $d_{g_j}$ denote the Riemannian length distance on $M$.
Assume that, under the identity boundary identification,
\begin{equation}
  \Lambda_{g_1}=\Lambda_{g_2}
  \label{eq:3}
\end{equation}
as operators $C^\infty(\partial M)\to C^\infty(\partial M)$.

\begin{theorem}\label{thm:global-quasianalytic}
Let $n\ge3$, let $\mathcal M$ satisfy \eqref{eq:A1}--\eqref{eq:A4}, and let $M$ be a compact connected $\Eclass$ $n$-manifold with nonempty boundary.
Let $g_1,g_2$ be positive definite metrics of class $\Eclass$ whose coordinate coefficients extend in this class across the boundary, and assume that the identity map between their marked boundaries has the same regularity.
If their full Dirichlet-to-Neumann maps agree as in \eqref{eq:3}, then there is an $\Eclass$ diffeomorphism $F:M\to M$ such that
\begin{equation}
  F|_{\partial M}=\Id,
  \qquad F^*g_2=g_1.
  \label{eq:4}
\end{equation}
\end{theorem}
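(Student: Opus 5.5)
We will follow the Lee--Uhlmann strategy in the form reorganized by Lassas--Liimatainen--Salo, replacing each use of real analyticity by the identity principle for $\Eclass$ (consequence of \eqref{eq:A4}) and the closure properties of the class.

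\textbf{Step 1 (boundary jets).} Fix a boundary point and use boundary normal coordinates for $g_1$ and $g_2$ built from the same boundary chart. The factorization of $\Delta_{g_j}$ and the symbol calculus for $\Lambda_{g_j}$ are purely $C^\infty$. Hence \eqref{eq:3} forces the full symbols of $\Lambda_{g_1}$ and $\Lambda_{g_2}$ to agree, and this determines every normal derivative $\partial_{x^n}^k g_{j,\alpha\beta}|_{x^n=0}$. In these coordinates the two metrics therefore have identical Taylor series at every boundary point.

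\textbf{Step 2 (collar).} The boundary normal coordinates come from the exponential map along the normal, that is, from an ODE with $\Eclass$ coefficients. Because the coefficients extend across $\partial M$ in $\Eclass$ and the class is stable under ODE solutions, composition and inversion (granted by \eqref{eq:A1}--\eqref{eq:A3}), these coordinates are $\Eclass$. The identity principle then gives $g_1=g_2$ on a two-sided neighbourhood of each boundary point in these coordinates. Gluing the normal-coordinate maps gives an $\Eclass$ diffeomorphism $F_0$ on a collar $U$ of $\partial M$ with $F_0|_{\partial M}=\Id$ and $F_0^*g_2=g_1$ on $U$. Equivalently, we enlarge $M$ across $\partial M$ to $\widetilde M$ with a common extension of the two metrics on the added layer.

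\textbf{Step 3 (global map via Poisson/harmonic embeddings).} For each $x\in M$, consider the Green function or Poisson kernel of $g_j$ on the enlarged manifold, evaluated at points of the exterior layer where the metrics coincide. Equality of the DN maps together with Runge approximation shows that the Green functions $G_1(\cdot,y)$ and $G_2(\cdot,y)$ agree for $y$ in the exterior layer, as functions on that layer. Define the embeddings $\Phi_j(x)=G_j(x,\cdot)$ restricted to the layer. By elliptic regularity in $\Eclass$, which uses moderate growth \eqref{eq:A3}, these are $\Eclass$ maps. Set $F=\Phi_2^{-1}\circ\Phi_1$.

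\textbf{Step 4 (continuation).} Near $\partial M$ the map $F$ coincides with $F_0$ and is an isometry. Let $W$ be the set of points $x$ in the interior near which $F$ is a well-defined local $\Eclass$ diffeomorphism with $F^*g_2=g_1$. This set is open and nonempty. To show it is closed, note that the quantities $\Phi_j$ and the metric coefficients pulled back through $F$ are $\Eclass$ on connected neighbourhoods. Equality on an open piece then propagates by the identity principle, as in the analytic sheaf argument, and injectivity of $\Phi_j$ prevents degeneration at the limit point. Connectedness of $M$ gives $W=M$, and $F$ is bijective because $\Phi_1(M)=\Phi_2(M)$.

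\textbf{Main obstacle.} We expect Step 3 to be the hardest. It needs the $\Eclass$ regularity of harmonic functions and Green kernels up to the extended layer, since in the analytic case this is classical but here it rests on \eqref{eq:A3} and the cited elliptic regularity. It also needs the closedness argument in Step 4, which requires that $F$ extend up to a limit point without collapse. Our first approach would mimic Lassas--Liimatainen--Salo verbatim: prove injectivity and immersivity of $\Phi_j$ by Runge approximation, which is $C^\infty$. Analyticity would then be invoked only through the identity principle, now supplied by \eqref{eq:A4}.
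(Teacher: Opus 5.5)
Your outline follows the paper's architecture: smooth boundary jets, a quasianalytic collar, an embedding by global harmonic data, and an open--closed continuation driven by the identity principle. It makes two substitutions.

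\emph{First substitution: the embedding.} You use the Lassas--Uhlmann Green-function embedding on an enlarged manifold $\widetilde M$ instead of the Poisson embedding $P_j(x)(f)=u_j^f(x)$ on $M$. The enlargement is available here, because the extension hypothesis plus the identity principle give a common exterior layer. It buys you something real: the components $G_j(\cdot,y)$ are $\Eclass$ across $\partial M$, since $\partial M$ is interior to $\widetilde M$. In the paper, $u_j^f$ is $\Eclass$ only in $M^\circ$, so the boundary has to be handled through $F_0$.

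It also costs two verifications that the Poisson route replaces by a single boundary Cauchy uniqueness step. You need $G_1=G_2$ on the layer; this follows by gluing and uniqueness of the Green function, not from Runge approximation. You also need $\Phi_1=\Phi_2\circ F_0$ on the collar, which requires unique continuation from the layer.

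\emph{Second substitution: recovering the metric.} You put $F^*g_2=g_1$ into the continuation set. This is legitimate and shorter. Since $F$, $g_1$, $g_2$ are all $\Eclass$, the components of $F^*g_2-g_1$ vanish on a connected coordinate domain once they vanish on an open piece of it. You therefore bypass Lemma~\ref{lem:metric-from-harmonic} entirely: property (R3), the compatible-Hessian hyperplanes, and the conformal-factor computation. The paper's route instead recovers the metric from the harmonic correspondence by a smooth argument, which confines quasianalyticity to the two propagation steps. In your route, $n\ge3$ enters only through the boundary-jet recursion.

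Where your sketch is too thin is the closedness of $W$. The phrase ``the quantities pulled back through $F$ are $\Eclass$ on connected neighbourhoods'' presupposes that $F$ is already defined near the limit point $x_1$. That is exactly what must be shown, because $F=\Phi_2^{-1}\circ\Phi_1$ exists only where $\Phi_1(x)\in\Phi_2(M)$. You need the construction in Step~3 of Lemma~\ref{lem:poisson-globalization}, transcribed to Green functions:
\begin{enumerate}
\item Take $p_k\in W$ with $p_k\to x_1$, and extract $F(p_k)\to x_2$ by compactness of $M$.
\item Keep $x_2$ off the collar. This, and not nondegeneracy, is what injectivity of $\Phi_1$ together with $\Phi_1=\Phi_2\circ F_0$ on the collar provides.
\item Use immersivity of $\Phi_2$ to choose finitely many components $U_2=(G_2(\cdot,y_1),\dots,G_2(\cdot,y_n))$ forming $\Eclass$ coordinates near $x_2$.
\item Build $U_1$ from the same $y_i$ for $g_1$, note that $U_1(x_1)=U_2(x_2)$ by continuity, and define $\widetilde F=U_2^{-1}\circ U_1$ on a connected neighbourhood $\Omega_1$ of $x_1$.
\end{enumerate}
For large $k$ one has $\widetilde F=F$ near $p_k$. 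The identity principle on $\Omega_1$, applied to $G_1(\cdot,y)-G_2(\widetilde F(\cdot),y)$ for every $y$ and to $\widetilde F^*g_2-g_1$, then gives $\Phi_1=\Phi_2\circ\widetilde F$ and the isometry identity near $x_1$. Invertibility of $D\widetilde F$ then follows from the isometry identity; the paper gets it from immersivity at $x_1$. Injectivity of $\Phi_j$ says nothing about differentials, so it cannot supply this step.
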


For the partial-data consequence, let $\Gamma\subset\partial M$ be nonempty and relatively open, write $M^\circ=M\setminus\partial M$, and put
\[
  M^\Gamma=M^\circ\cup\Gamma.
\]
Whenever $f\in C_c^\infty(\Gamma)$, we identify $f$ with its zero extension to $\partial M$ and keep the notation $u_j^f$ for the solution of \eqref{eq:2}.  Define the partial Dirichlet-to-Neumann map by
\begin{equation}
  \begin{aligned}
    \Lambda_{g_j}^\Gamma&:C_c^\infty(\Gamma)\longrightarrow C^\infty(\Gamma),\\
    \Lambda_{g_j}^\Gamma f&=\big(\Lambda_{g_j}f\big)|_\Gamma,
    \qquad j=1,2.
  \end{aligned}
  \label{eq:local-quasianalytic-dn}
\end{equation}

\begin{corollary}\label{cor:global-quasianalytic-local}
Let $n\ge3$, let $\mathcal M$ satisfy \eqref{eq:A1}--\eqref{eq:A4}, and let $M$ be a compact connected $\Eclass$ $n$-manifold with nonempty boundary.
Let $g_1,g_2$ be positive definite metrics of class $\Eclass$ whose coordinate coefficients extend in this class across the boundary, and assume that the identity map between their marked boundaries has the same regularity.
Let $\Gamma\subset\partial M$ be nonempty and relatively open.
If the partial Dirichlet-to-Neumann maps in \eqref{eq:local-quasianalytic-dn} satisfy
\begin{equation}
  \Lambda_{g_1}^\Gamma=\Lambda_{g_2}^\Gamma
  \label{eq:local-quasianalytic-data}
\end{equation}
as operators $C_c^\infty(\Gamma)\to C^\infty(\Gamma)$, then there is an $\Eclass$ diffeomorphism $F:M^\Gamma\to M^\Gamma$ such that
\begin{equation}
  F|_\Gamma=\Id,
  \qquad F^*g_2=g_1\quad\text{on }M^\Gamma.
  \label{eq:local-quasianalytic-conclusion}
\end{equation}
Moreover, the distance-preserving map $F|_{M^\circ}$ has a canonical continuous extension to an isometry between the metric completions of $(M^\circ,d_1^\circ)$ and $(M^\circ,d_2^\circ)$, where $d_j^\circ$ is the intrinsic Riemannian distance of $(M^\circ,g_j)$ and $d_{g_j}$ is the Riemannian length distance on $M$; these completions are canonically the compact metric spaces $(M,d_{g_1})$ and $(M,d_{g_2})$.
\end{corollary}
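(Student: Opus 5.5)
The plan follows the Lassas--Liimatainen--Salo Poisson embedding scheme for data on $\Gamma$, with the quasianalytic identity principle taking the place of real analyticity wherever continuation is needed. There are four steps, carried out in order.

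\emph{Step 1: boundary determination on $\Gamma$.} The partial map $\Lambda^\Gamma_{g_j}$ is a classical pseudodifferential operator on $\Gamma$. Its full symbol is computed locally, so equality of the partial maps determines, in boundary normal coordinates based on $\Gamma$, the complete Taylor jet of $g_1$ and $g_2$ at every point of $\Gamma$. Because the coefficients extend in $\Eclass$ across the boundary and $\Eclass$ is stable under the ODE solving that produces boundary normal coordinates, these coordinates are $\Eclass$. The identity principle from \eqref{eq:A4} then shows that the two metrics agree on a one-sided collar $U\subset M^\Gamma$ of some $\Gamma_0\Subset\Gamma$. This gives a local $\Eclass$ isometry $F_0$ with $F_0|_{\Gamma_0}=\Id$.

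\emph{Step 2: Poisson embeddings and the candidate map.} Enlarge $M$ across $\Gamma$ by a small collar $W$, on which the metrics agree after Step 1. Each $x\in M^\circ$ is mapped to the Poisson kernel $y\mapsto P_j(x,y)$ for $y\in\Gamma$, or, equivalently, to Green's functions $G_j(x,z)$ with poles $z\in W$. Equality of the partial data, together with unique continuation and Runge approximation by solutions whose boundary values are supported in $\Gamma$, gives $G_1(\cdot,z)=G_2(\cdot,z)$ on $U\cup W$. The embeddings $\mathcal P_j:M^\circ\to C^\infty(\overline W)$ are injective immersions. On $U$ they satisfy $\mathcal P_1=\mathcal P_2\circ F_0$.

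\emph{Step 3: continuation.} Let $V$ be the largest connected open set containing $U$ on which $\mathcal P_1(V)\subset\mathcal P_2(M^\circ)$ and $F=\mathcal P_2^{-1}\circ\mathcal P_1$ is a local isometry. I would argue that $V$ is closed in $M^\circ$. At a limit point $x$, choose finitely many Green's functions $G_j(\cdot,z_k)$ that give $\Eclass$ harmonic coordinates near $x$ and near the candidate image point. Such coordinates exist by elliptic regularity in $\Eclass$, which uses \eqref{eq:A3}, and by the inverse function theorem in $\Eclass$. In these coordinates both metric coefficient families are $\Eclass$. They coincide on an open subset of a connected neighbourhood, because there the pullback relation already holds, so the identity principle makes them coincide on the whole neighbourhood. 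This extends $F$ past $x$. Connectedness of $M^\circ$ then gives $V=M^\circ$. Bijectivity follows by running the same argument with $1$ and $2$ exchanged: the inverse is $\mathcal P_1^{-1}\circ\mathcal P_2$. Regularity up to $\Gamma$, together with $F|_\Gamma=\Id$, comes from Step 1 near $\Gamma$.

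\emph{Step 4: metric completions.} $F$ is a Riemannian isometry of $(M^\circ,g_1)$ onto $(M^\circ,g_2)$, so it preserves the intrinsic length distances $d_j^\circ$. A distance-preserving bijection extends uniquely and continuously to an isometry of the completions. It remains to identify the completion of $(M^\circ,d_j^\circ)$ with $(M,d_{g_j})$. For this, note that $M$ is compact, that $M^\circ$ is dense in $M$, and that $d_j^\circ=d_{g_j}$ on $M^\circ$, since any curve in $M$ can be pushed slightly into the interior through a collar while changing its length arbitrarily little.

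I expect the main obstacle to be Step 3, in particular showing that the set $V$ is closed. One must rule out the image escaping to the inaccessible boundary $\partial M\setminus\Gamma$, and one must get a uniform lower bound on the size of the harmonic-coordinate neighbourhoods. My first approach would be to use that $\mathcal P_1$ is a local isometry onto its image, so distances are controlled, and compactness of $M$ then keeps limit points inside $M^\circ$, where the identity principle can be applied as above.
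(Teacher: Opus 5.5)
Your architecture matches the paper's: a seed collar from boundary determination and the identity principle, a Poisson/Green embedding, a maximal-domain continuation, and the metric-completion argument. Your way of continuing the metric differs from the paper's and is workable. You build the local-isometry property into the maximal domain and apply the identity principle to metric coefficients in Green's-function coordinates, in the style of Lassas--Uhlmann. The paper instead continues only the scalar functions $u_1^f\circ U_1^{-1}$ and $u_2^f\circ U_2^{-1}$. It then recovers $g_1=F^*g_2$ separately, from compatible Hessians and the $n\ge3$ conformal-factor identity. You should state one step explicitly. Once the coefficients agree on $\Omega_1$, it is unique continuation for the $g_1$-harmonic functions $G_1(\cdot,z)$ and $G_2(\widetilde F(\cdot),z)$ that gives $\mathcal P_1=\mathcal P_2\circ\widetilde F$ there.

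The genuine gap is the step you flag yourself, and the mechanism you suggest for it would fail. Closedness of $V$ requires the following: if $p_k\in V$ and $p_k\to x_1\in M^\circ$, then every limit $x_2$ of $F(p_k)$ lies in $M^\circ$. Otherwise there is no interior harmonic chart at $x_2$ and nothing to continue into. The local-isometry property of $F$ cannot give this. It would bound $d_{g_2}(F(p_k),\partial M)$ from below only if $F$ were defined on $g_1$-balls of a fixed radius about $p_k$. That is exactly what fails as $p_k$ approaches $\partial V$, since a local isometry defined only on $V$ does not see $\partial M$. Compactness of $M$ only yields $x_2\in M$, possibly on the inaccessible part $\partial M\setminus\Gamma$.

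The missing idea comes from the PDE, via the strong maximum principle. Pass to the limit in $u_1^f(p_k)=u_2^f(F(p_k))$, using that $u_2^f$ is continuous on $M$. If $x_2\in\partial M$, this gives $u_1^f(x_1)=\widetilde f(x_2)$ for all $f\in C_c^\infty(\Gamma)$, where $\widetilde f$ is the zero extension. Now choose $f\ge0$, $f\not\equiv0$, with $\widetilde f(x_2)=0$. This contradicts $u_1^f(x_1)>0$, which holds by the strong maximum principle. In the Green's-function formulation, positivity of $G_1(x_1,\cdot)$ on $W$ plays the same role.

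Once $x_2\in M^\circ$ is known, your worry about a uniform size for the coordinate neighbourhoods disappears. Choose the chart $Z_2$ at $x_2$ and let $Z_1$ be the corresponding functions for $g_1$. To get invertibility of $D(Z_2^{-1}\circ Z_1)(x_1)$, let $k\to\infty$ in the differentiated identity at $p_k$. Then use that differentials of the data-generated solutions span $T^*_{x_1}M$.

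A smaller point: a single collar over $\Gamma_0\Subset\Gamma$ gives $F|_\Gamma=\Id$ and regularity only over $\Gamma_0$. To cover all of $\Gamma$, build the collar map $F_{0,q}$ at every $q\in\Gamma$, and use injectivity of the embedding to conclude that $F=F_{0,q}$ near $q$.
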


The theorem retains the general geometry and the full-boundary gauge conclusion of the analytic theory under a broader regularity assumption, while Corollary~\ref{cor:global-quasianalytic-local} shows that the same quasianalytic propagation mechanism can start from an arbitrary nonempty measured boundary portion.
Within the stated quasianalytic and extension hypotheses, passing from full data to the partial data in \eqref{eq:local-quasianalytic-dn} imposes no additional geometric condition on $M$ or on the metrics: it changes only the boundary portion fixed by the natural gauge, from $\partial M$ to $\Gamma$.
This weakening is optimal: if $H:M\to M$ is a diffeomorphism with $H|_\Gamma=\Id$, then the partial Dirichlet-to-Neumann maps of $g$ and $H^*g$ defined above coincide, even if $H$ moves the inaccessible boundary.

\begin{remark}
\label{rem:analytic-to-quasianalytic}
The proof has three steps.
First, smooth boundary determination recovers the complete metric jets in boundary normal coordinates.
Second, the Denjoy--Carleman identity principle turns the matching jets into an isometry on a genuine collar.
Third, the Poisson embedding and the same identity principle extend this local identification through the interior.
Conditions \eqref{eq:A1}--\eqref{eq:A3} keep boundary normal coordinates, harmonic functions, harmonic coordinates, and transition maps in $\Eclass$, while \eqref{eq:A4} propagates equality across connected coordinate domains.
Thus the argument may be summarized as
\[
  \begin{gathered}
    \text{smooth boundary determination}
    \ \Longrightarrow\ \text{common quasianalytic collar}\\
    \Longrightarrow\ \text{global Poisson identification}.
  \end{gathered}
\]
The logarithmic class and nonanalytic metric displayed above show that this is a genuine extension beyond the real-analytic category.
\end{remark}

\medskip
\noindent\textbf{Conductivity formulation.}
For the contravariant tensor density $\gamma_g^{ab}=|g|^{1/2}g^{ab}$, where $|g|=\det(g_{ab})$ in local coordinates, define the weak Dirichlet-to-Neumann map by
\begin{equation}
  \langle \Lambda_\gamma f,\varphi\rangle_{\partial M}
  =\int_M\gamma^{ab}\partial_a u_\gamma^f\partial_b\widetilde\varphi,
  \qquad
  \partial_a(\gamma^{ab}\partial_b u_\gamma^f)=0,
  \qquad u_\gamma^f|_{\partial M}=f,
  \label{eq:5}
\end{equation}
where $f,\varphi\in C^\infty(\partial M)$ and $\widetilde\varphi$ is any smooth extension of $\varphi$ to $M$.
For coefficients induced by a metric, with $\dd S_g$ denoting the induced boundary measure,
\begin{equation}
  \Lambda_{\gamma_g}f=(\Lambda_gf)\dd S_g|_{\partial M}.
  \label{eq:6}
\end{equation}
Suppose that two uniformly positive $\Eclass$ conductivity densities extend across the boundary and satisfy $\Lambda_{\gamma_1}=\Lambda_{\gamma_2}$ under the identity map of the boundary.
Define
\[
  g_j=(\det\gamma_j)^{1/(n-2)}\gamma_j^{-1},\qquad j=1,2.
\]
Theorem~\ref{thm:global-quasianalytic} then gives a boundary-fixing diffeomorphism $F$ with $F^*g_2=g_1$, or equivalently
\begin{equation}
  \gamma_2=F_*\gamma_1,
  \qquad
  (F_*\gamma_1)(y)=\frac{DF(x)\gamma_1(x)DF(x)^T}{|\det DF(x)|},
  \qquad x=F^{-1}(y).
  \label{eq:7}
\end{equation}
\begin{remark}
The gauge in Theorem~\ref{thm:global-quasianalytic} is unavoidable.
If $F:M\to M$ is any diffeomorphism with $F|_{\partial M}=\Id$, then $u\circ F$ is harmonic with respect to $F^*g$ whenever $u$ is harmonic with respect to $g$, and the isometry $F:(M,F^*g)\to(M,g)$ carries the outward unit normal to the outward unit normal.
Hence
\begin{equation}
  \Lambda_{F^*g}=\Lambda_g,
  \qquad \Lambda_{\gamma_{F^*g}}=\Lambda_{\gamma_g}.
  \label{eq:gauge-invariance-main}
\end{equation}
Thus the identity marking of the boundary does not determine the interior coordinate gauge and does not require $DF$ to be the identity in the normal direction.
\end{remark}

\medskip
\noindent\textbf{Quasianalyticity in the normal variable.}
Let $n\ge3$, let $L>0$, let $Y$ be a closed connected smooth $(n-1)$-manifold, and set
\begin{equation}
  M=[0,L]\times Y,
  \qquad \Gamma_0=\{0\}\times Y,
  \qquad \Gamma_L=\{L\}\times Y.
  \label{eq:9}
\end{equation}
Let $W=(W_k)_{k\ge0}$ be a logarithmically convex weight sequence with $W_0=1$ and
\begin{equation}
  \sum_{k=1}^\infty\frac{W_{k-1}}{W_k}=\infty.
  \label{eq:10}
\end{equation}
Suppose that, in the same fixed product coordinates, two smooth metrics have the global boundary normal form
\begin{equation}
  g_j=\dd t^2+h_j(t,y),\qquad j=1,2,
  \label{eq:11}
\end{equation}
where $h_j(t,\cdot)$ is a positive definite metric on $Y$.
Assume that every coordinate component of $h_j$ extends smoothly to $(-\varepsilon,L+\varepsilon)\times Y$ for some $\varepsilon>0$ and that, for every compact coordinate set $K\Subset Y$, every tangential multiindex $\alpha$, and every compact interval $I_0\Subset(-\varepsilon,L+\varepsilon)$, there are constants $C_{I_0,K,\alpha},R_{I_0,K,\alpha}>0$ such that
\begin{equation}
  \sup_{(t,y)\in I_0\times K}
  \big|\partial_t^k\partial_y^\alpha(h_j)_{\mu\nu}(t,y)\big|
  \le C_{I_0,K,\alpha}R_{I_0,K,\alpha}^kW_k
  \qquad(k\ge0).
  \label{eq:12}
\end{equation}
Assume that the full Dirichlet-to-Neumann maps satisfy
\begin{equation}
  \Lambda_{g_1}=\Lambda_{g_2}
  \label{eq:13}
\end{equation}
under the identity map on $\Gamma_0\sqcup\Gamma_L$.

\begin{theorem}\label{thm:normal-quasianalytic}
Let $n\ge3$, let $W=(W_k)_{k\ge0}$ be a positive logarithmically convex weight sequence with $W_0=1$ satisfying \eqref{eq:10}, and let $M=[0,L]\times Y$ with $Y$ a closed connected smooth $(n-1)$-manifold and boundary components $\Gamma_0$ and $\Gamma_L$ as in \eqref{eq:9}.
For $j=1,2$, let $g_j=\dd t^2+h_j(t,y)$ be smooth metrics in the fixed product coordinates whose coefficients satisfy the normal-variable estimates \eqref{eq:12}.
If their full Dirichlet-to-Neumann maps agree as in \eqref{eq:13} under the identity map on $\Gamma_0\sqcup\Gamma_L$, then
\begin{equation}
  h_1(t,y)=h_2(t,y)\qquad\text{on }[0,L]\times Y,
  \qquad\text{and consequently }g_1=g_2.
  \label{eq:14}
\end{equation}
\end{theorem}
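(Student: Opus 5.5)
The plan is to combine smooth boundary determination at the component $\Gamma_0$ with the one-dimensional Denjoy--Carleman identity principle in $t$, applied separately for each fixed $y$ and each tangential derivative.

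\textbf{Step 1: boundary jets.} Equality of the full Dirichlet-to-Neumann maps in \eqref{eq:13} restricts to equality of the $\Gamma_0$-to-$\Gamma_0$ block. That block is a classical pseudodifferential operator on $\Gamma_0$ modulo smoothing operators. Its full symbol is computed from the metric near $\Gamma_0$ alone by the Lee--Uhlmann factorization. Since both metrics are already in boundary normal form \eqref{eq:11} with respect to the same coordinates, and the boundary identification is the identity, no gauge ambiguity remains. The Lee--Uhlmann recursion therefore gives
\[
  \partial_t^k h_1(0,y)=\partial_t^k h_2(0,y)\qquad\text{for all }k\ge0,\ y\in Y.
\]
Concretely, the symbol term of order $1-k$ determines $\partial_t^{k}h$ at $t=0$ through a triangular recursion, given the lower-order terms.

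This step uses only smoothness. I would cite it rather than reprove it. The only care needed is that the symbol is computed from the Riccati equation in the normal variable, with $h$ and its tangential derivatives entering algebraically.

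\textbf{Step 2: propagation in $t$.} Fix a coordinate chart $K\Subset Y$, a component $(\mu,\nu)$, and a point $y\in K$. Set
\[
  \phi(t)=(h_1)_{\mu\nu}(t,y)-(h_2)_{\mu\nu}(t,y)
\]
on $(-\varepsilon,L+\varepsilon)$. By \eqref{eq:12} with $\alpha=0$, on every compact $I_0$ we have $|\phi^{(k)}|\le 2C R^k W_k$. Thus $\phi$ lies in the one-variable Denjoy--Carleman class $C^{\{W\}}$, and this class is quasianalytic by \eqref{eq:10} together with logarithmic convexity and $W_0=1$, by the Denjoy--Carleman theorem.

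By Step 1, all derivatives of $\phi$ vanish at $t=0$. The interval $(-\varepsilon,L+\varepsilon)$ is connected, so $\phi\equiv0$ there.

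One subtlety is that \eqref{eq:12} gives constants on compact subintervals only. Quasianalyticity is local, so I would apply it on $[-\varepsilon/2,L+\varepsilon/2]$, which contains $0$ and $[0,L]$.

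Since $y$ was arbitrary, $h_1=h_2$ on $[0,L]\times Y$, and hence $g_1=g_2$. The tangential bounds in \eqref{eq:12} for $\alpha\neq0$ are not needed at this point. They would only enter if Step 1 required uniformity, for instance to justify that the symbol expansion determines jets pointwise.

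\textbf{Main obstacle.} I expect the main obstacle to be Step 1. The metrics are only smooth in $y$, so one must check that the boundary determination argument yields the full normal jet at every point of $\Gamma_0$ without any analyticity in $y$. One must also check that the cross blocks of the Dirichlet-to-Neumann map involving $\Gamma_L$ do not interfere.

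The $\Gamma_0$ block depends on the interior only through a smoothing remainder: the Schwartz kernel is smooth away from the diagonal, and the two components $\Gamma_0$ and $\Gamma_L$ are disjoint. So its full symbol is determined locally, and the standard computation in boundary normal coordinates applies verbatim.

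If that citation proves delicate, the fallback is to determine the jets at $\Gamma_L$ as well. This would give a second anchor point, although one anchor already suffices.
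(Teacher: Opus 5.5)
Your proposal is correct and follows essentially the same route as the paper. The paper likewise restricts \eqref{eq:13} to the $\Gamma_0$ block, noting that the $\Gamma_L$ contribution is smoothing near the diagonal, and applies the Lee--Uhlmann triangular symbol recursion of Lemma~\ref{lem:local-boundary-jets} in the common product coordinates to obtain all normal jets at $t=0$. It then applies the one-variable Denjoy--Carleman identity principle to $t\mapsto (h_1-h_2)_{\mu\nu}(t,y)$ for each fixed $y$, using only the bounds \eqref{eq:12} with $\alpha=0$, exactly as you do. The only extra detail in the paper is Lemma~\ref{lem:common-normal-distance}, which checks explicitly that $t$ is the $g_j$-distance to $\Gamma_0$ for both metrics. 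That lemma justifies your assertion that the prescribed product chart is already the common boundary normal gauge.
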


\begin{remark}
\label{rem:normal-geometry}
The conclusion is literal equality in the prescribed product coordinates.
No Denjoy--Carleman estimate involving the total order $k+|\alpha|$ is imposed in \eqref{eq:12}; the tangential dependence may be arbitrary $C^\infty$.
Condition \eqref{eq:10} is used only as a one-dimensional identity principle along each curve $t\mapsto(t,y)$, after the boundary symbol has recovered all normal derivatives.

On a general manifold, the interior point correspondence is unknown, and Theorem~\ref{thm:global-quasianalytic} uses quasianalyticity in all variables to construct and extend it.
Here the prescribed global boundary normal form already supplies that correspondence: $t$ is the distance from $\Gamma_0$, and $y$ labels the normal geodesic starting at $(0,y)$.
No Poisson embedding is needed, and the recovered boundary jets have to be continued only along the known curves $t\mapsto(t,y)$.
Thus the global geometry carries the boundary identification into the interior, confines continuation to the normal variable, and leaves no residual diffeomorphism.
This hypothesis is substantially stronger than the local boundary normal form available near the boundary of every smooth metric, since one prescribed product chart must cover the whole manifold.
\end{remark}

\medskip
\noindent\textbf{Tangentially homogeneous $C^\infty$ metrics.}
Let $m\ge2$, set
\[
  M=[0,L]\times\T^m,
  \qquad \T^m=\R^m/(2\pi\Z)^m,
\]
and consider
\begin{equation}
  g_j=\dd t^2+h_{j,\alpha\beta}(t)\dd y^\alpha\dd y^\beta,
  \qquad h_j\in C^\infty([0,L];\Sym_m^+),
  \qquad j=1,2.
  \label{eq:15}
\end{equation}
The product coordinates and both boundary components are fixed.
Let $\Lambda_{g_j}$ be the Dirichlet-to-Neumann map, compared in the same Fourier basis on the boundary.

\begin{theorem}\label{thm:tangentially-homogeneous}
Let $m\ge2$, let $M=[0,L]\times\T^m$, and let $g_1,g_2$ be tangentially homogeneous $C^\infty$ metrics of the form \eqref{eq:15} in the fixed product coordinates.
If their full Dirichlet-to-Neumann maps, expressed in the common boundary Fourier basis, agree on $\{0,L\}\times\T^m$, then $h_1(t)=h_2(t)$ for every $0\le t\le L$.
\end{theorem}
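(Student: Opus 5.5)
The plan is to separate variables in the boundary Fourier basis and reduce the statement to a family of one-dimensional inverse problems indexed by the frequency $k\in\Z^m$. In the product coordinates write $\rho_j(t)=(\det h_j(t))^{1/2}$ and $q_j(t,k)=k^Th_j(t)^{-1}k$. For $u=v(t)e^{ik\cdot y}$ the equation $\Delta_{g_j}u=0$ becomes
\[
  -\rho_j^{-1}(\rho_jv')'+q_j(t,k)\,v=0\quad\text{on }[0,L].
\]
Because $h_j$ does not depend on $y$, the Dirichlet-to-Neumann map is block diagonal in the Fourier basis. Hypothesis $\Lambda_{g_1}=\Lambda_{g_2}$ therefore says exactly that, for every $k$, the $2\times2$ matrices $N_j(k)$ sending $(v(0),v(L))$ to $(-v'(0),v'(L))$ coincide.

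\emph{Mode $k=0$.} The solutions are $v=a+b\int_0^t\rho_j^{-1}$. Equality of $N_1(0)$ and $N_2(0)$ then gives $\rho_1(0)=\rho_2(0)$, $\rho_1(L)=\rho_2(L)$ and $\int_0^L\rho_1^{-1}=\int_0^L\rho_2^{-1}$. This settles the normalisation but does not yet determine $\rho_j$ in the interior.

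\emph{Fixed primitive direction $\omega\in\Z^m$, $k=s\omega$ with $s\in\Z$.} Set $Q_j=\omega^Th_j^{-1}\omega$ and pass to the travel-time variable $x=\int_0^t Q_j^{1/2}$, whose total length is $\ell_{j,\omega}$. The equation becomes the impedance (string) form $-(A_jv_x)_x+s^2A_jv=0$ with $A_j=\rho_jQ_j^{1/2}$. After the Liouville substitution $w=A_j^{1/2}v$, it is a Schr\"odinger equation $-w''+V_jw=-\lambda w$ on $[0,\ell_{j,\omega}]$, with $\lambda=s^2$.

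Several quantities are then available from the data. The entries of $N_j(s\omega)$, rewritten in the $x$-variable, are Weyl--Titchmarsh-type functions that extend meromorphically in $\lambda$. The off-diagonal entry behaves like $e^{-\ell_{j,\omega}|s|}$ times an explicit prefactor, so its large-$s$ asymptotics recover $\ell_{1,\omega}=\ell_{2,\omega}$ together with the endpoint values of $A_j$. The diagonal entries, expanded through the Riccati equation for $v'/v$, recover all $t$-derivatives of $Q_j$ and $\rho_j$ at $t=0$ and $t=L$. That last step is the standard smooth boundary-symbol computation in one dimension.

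The main step, and the one I expect to be the real obstacle, is to pass from equality at the integer points $\lambda=s^2$ to equality of the whole Weyl functions. Once that is done, the Borg--Marchenko theorem gives $V_1=V_2$ on $[0,\ell_\omega]$. Combined with the matched endpoint data of $A_j$, this gives $A_1=A_2$ as functions of $x$, and hence identifies the pair $(\rho_j,Q_j)$ up to the reparametrisation $t\leftrightarrow x$.

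To justify the interpolation I would first form suitable products of entries of $N_1-N_2$ with the characteristic functions, so as to obtain an entire function of exponential type in $\kappa=\sqrt\lambda$ that vanishes at every integer. I would then apply Carlson's theorem. The difficulty is that the type is of order $\ell_\omega$, and Carlson requires type $<\pi$; this cannot be arranged for a single direction when $\ell_\omega$ is large.

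My first attempt to overcome this would exploit $m\ge2$. For two independent directions $\omega_1,\omega_2$, the frequencies $k=a\omega_1+b\omega_2$ make the coefficient $a^2Q_{11}+2abQ_{12}+b^2Q_{22}$ depend on two integer parameters. I would then regard the data as a function of $(a,b)\in\R^2$ that is entire of exponential type in each variable separately. Vanishing on the lattice, together with the rotational freedom in choosing $(\omega_1,\omega_2)$, should supply enough density, for instance by restricting to lines of rational slope whose effective lengths can be compared, to force vanishing by a two-variable Carlson or P\'olya-type argument. This is where the hypothesis $m\ge2$ and the symmetry in $y$ enter.

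Finally, once the scalar functions $Q_j^{(\omega)}$ and $\rho_j$ agree for every $\omega$ as functions of $t$, the quadratic forms $\omega^Th_1(t)^{-1}\omega$ and $\omega^Th_2(t)^{-1}\omega$ agree on all of $\Z^m$. By polarisation, using $\omega=e_\alpha,e_\beta,e_\alpha+e_\beta$, this gives $h_1(t)^{-1}=h_2(t)^{-1}$, and hence $h_1(t)=h_2(t)$ for every $0\le t\le L$.

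The passage from the $x$-variable back to $t$ also has to be checked. The reparametrisation depends on $\omega$, but the identity $\rho_1=\rho_2$ is independent of $\omega$. Comparing the recovered equalities $A_1=A_2$ for two directions therefore pins down the parametrisation, since $t$ itself is fixed as the $g_j$-distance to $\Gamma_0$ in the common product coordinates.
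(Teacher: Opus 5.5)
\paragraph{The main gap: integer samples to the potential.} Your outline agrees with the paper's up to the inverse spectral step: block diagonalization, directional Liouville transforms, endpoint metrics from the large-$s$ diagonal asymptotics, and $\ell_{1,\omega}=\ell_{2,\omega}$ from the off-diagonal decay. The decisive step is not proved: you need that Weyl values at the integers $\kappa=s$ determine $V_j$. Your two-variable Carlson/P\'olya scheme is only a hope, and the obstruction it is meant to get around does not exist.

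Carlson's theorem restricts growth only along the imaginary axis. In Boas' form the condition is $h(\pi/2)+h(-\pi/2)<2\pi$ for the indicator, and the type towards the real axis may be arbitrarily large. Let $\theta_j,\varphi_j$ be the solutions with $\theta_j(0)=\varphi_j'(0)=1$ and $\theta_j'(0)=\varphi_j(0)=0$. Then $\mathfrak m_j=\theta_j(\ell)/\varphi_j(\ell)$. The function $F(\kappa)=\theta_1(\ell,\kappa)\varphi_2(\ell,\kappa)-\theta_2(\ell,\kappa)\varphi_1(\ell,\kappa)$ is entire of type $2\ell$, but it is bounded on each vertical line, because for $\kappa=i\eta$ the equation is oscillatory. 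Hence $F(N)=0$ for $N\ge1$ forces $F\equiv0$, and Borg--Marchenko (or Borg's two-spectrum theorem) gives $V_1=V_2$ for a single direction. The hypothesis $m\ge2$ plays no role in this step.

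The paper avoids analytic interpolation entirely. Simon's $A$-amplitude expansion \eqref{eq:80} turns equality of the integer samples into the exponential-moment condition of Lemma~\ref{lem:discrete-exponential-moments}. That condition yields a local Borg--Marchenko statement (Lemma~\ref{lem:discrete-weyl}).

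\paragraph{The return to $t$ is circular as written.} After the spectral step you know, for each $\omega$, the quantity $A_\omega=\rho Q_\omega^{1/2}$ only as a function of its own travel time $x_\omega$. The fact that $t$ is the distance to $\Gamma_0$ for both metrics does not align the different variables $x_\omega$. Statements such as ``$\rho_1=\rho_2$'' or ``$Q^{(\omega)}_1=Q^{(\omega)}_2$ as functions of $t$'' are exactly what has not yet been shown.

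The missing observation is that $\tau=\int_0^t\rho^{-1}$ satisfies $\dd\tau=A_\omega^{-1}\dd x_\omega$. So $\tau$ is computable from recovered data and is the same for every direction and for both metrics. In this variable $A_\omega^2=\omega^T(\det h)h^{-1}\omega$. Polarization therefore yields the cofactor matrix $(\det h)h^{-1}$, not $h^{-1}$. Inverting the cofactor map is exactly where $m\ge2$ is needed: for $m=1$ the cofactor is identically $1$. Finally $\dd t/\dd\tau=\rho(\tau)$ with $t(0)=0$ returns you to $t$; this is Lemma~\ref{lem:liouville-synchronization}.

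\paragraph{A smaller error.} The $k=0$ block determines only $\rho_j(0)\int_0^L\rho_j^{-1}$ and $\rho_j(L)\int_0^L\rho_j^{-1}$, not the three separate quantities you list. The endpoint densities have to come from the high-frequency limits, as they do in the paper.
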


\begin{remark}
\label{rem:homogeneous-smooth}
Theorem~\ref{thm:tangentially-homogeneous} allows arbitrary $C^\infty$ dependence on the normal variable and assumes no analyticity, quasianalyticity, or convexity.
Theorem~\ref{thm:tangentially-homogeneous} is not obtained from Theorem~\ref{thm:global-quasianalytic} merely by weakening the regularity assumption from quasianalytic to smooth.
Tangential homogeneity makes the Fourier modes invariant under the Dirichlet-to-Neumann map and reduces the equation for each mode to a one-dimensional Sturm--Liouville equation.
The full boundary data determine Weyl samples at discrete negative energies, and inverse spectral uniqueness recovers the smooth one-dimensional potentials without an identity principle for Taylor series.
Varying the lattice direction and using polarization recover the tangential metric matrix, while comparison of the Liouville coordinates recovers the prescribed normal coordinate $t$.
Thus the passage to $C^\infty$ regularity is made possible by changing the geometry and the uniqueness mechanism, not by taking a limit of the quasianalytic theorem.
The global product form and tangential homogeneity are essential here, so this theorem does not resolve the unrestricted smooth anisotropic Calder\'on problem.
\end{remark}

\medskip
\noindent\textbf{$C^\infty$ partial-data uniqueness under layerwise ordering.}
Let $n\ge2$, let $\Omega\subset\R^n$ be a bounded connected domain with smooth boundary, and let $\Gamma\subset\partial\Omega$ be nonempty and relatively open.
Put
\[
  H_{00}^{1/2}(\Gamma)
  =\overline{C_c^\infty(\Gamma)}^{\,H^{1/2}(\partial\Omega)}.
\]
For a real symmetric uniformly positive conductivity $\gamma\in C^\infty(\overline\Omega;\Sym_n^+)$ and $f\in H_{00}^{1/2}(\Gamma)$, let $u_\gamma^f\in H^1(\Omega)$ solve
\[
  -\operatorname{div}(\gamma\nabla u_\gamma^f)=0\quad\text{in }\Omega,
  \qquad u_\gamma^f|_{\partial\Omega}=f.
\]
The partial DN operator
\[
  \Lambda_\gamma^\Gamma:
  H_{00}^{1/2}(\Gamma)\longrightarrow
  \bigl(H_{00}^{1/2}(\Gamma)\bigr)^*
\]
is defined by the bilinear form
\begin{equation}
  \langle\Lambda_\gamma^\Gamma f,h\rangle
  =\int_\Omega\gamma\nabla u_\gamma^f\cdot\nabla\widetilde h\dd x,
  \qquad f,h\in H_{00}^{1/2}(\Gamma),
  \label{eq:loewner-local-form}
\end{equation}
where $\widetilde h\in H^1(\Omega)$ has boundary trace $h$.

\begin{theorem}\label{thm:loewner}
Let $n\ge2$, let $\Omega\subset\R^n$ be a bounded connected smooth domain, and let $\Gamma\subset\partial\Omega$ be nonempty and relatively open.
For $j=1,2$, let $\gamma_j\in C^\infty(\overline\Omega;\Sym_n^+)$ be real symmetric uniformly positive conductivities and assume
\begin{equation}
  \Lambda_{\gamma_1}^\Gamma=\Lambda_{\gamma_2}^\Gamma.
  \label{eq:loewner-data}
\end{equation}
Suppose there are $T>0$ and $\rho\in C^\infty(\overline\Omega;[0,T])$ such that
\begin{equation}
  \rho^{-1}(0)=\partial\Omega,
  \qquad \dd\rho\ne0\quad\text{on }\{0\le\rho<T\}.
  \label{eq:loewner-foliation}
\end{equation}
For $0<s<T$, put $D_s=\{x\in\Omega:\rho(x)>s\}$ and assume
\begin{equation}
  D_s\Subset\Omega,
  \qquad \Omega\setminus\overline{D_s}\text{ is connected},
  \qquad \operatorname{int}_{\Omega}\{\rho=T\}=\varnothing.
  \label{eq:loewner-topology}
\end{equation}
Set $A=\gamma_1-\gamma_2$.
If for every $s\in[0,T)$ there are $b_s\in(s,T]$ and $\sigma_s\in\{1,-1\}$ such that
\begin{equation}
  \sigma_sA(x)\succeq0
  \qquad\text{whenever }s<\rho(x)<b_s,
  \label{eq:loewner-order}
\end{equation}
then
\begin{equation}
  \gamma_1=\gamma_2\qquad\text{on }\Omega.
  \label{eq:loewner-conclusion}
\end{equation}
The sign $\sigma_s$ may vary with the layer, and no strict-positivity or rank condition is imposed on the contrast.
If $n\ge3$ and $\gamma_j=|g_j|^{1/2}g_j^{-1}$ for $C^\infty$ metrics written in the same fixed coordinates, then $g_1=g_2$.
More generally, if the conductivity densities $\gamma_{g_1}$ and $\gamma_{g_2}$ of two $C^\infty$ metrics have equal partial DN bilinear forms on $\Gamma$ and there is a $C^\infty$ diffeomorphism $F:\overline\Omega\to\overline\Omega$ with $F|_\Gamma=\Id$ such that $\gamma_{g_1}-\gamma_{F^*g_2}$ satisfies \eqref{eq:loewner-order} for the stated foliation, then $g_1=F^*g_2$.
\end{theorem}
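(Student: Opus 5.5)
The plan is a layer-stripping argument driven by the monotonicity (Alessandrini-type) identity together with Runge approximation and localized potentials. Set
\[
  S=\{s\in[0,T):A=0\text{ on }\{\rho<s\}\cap\Omega\}.
\]
Trivially $0\in S$. We show that $S$ is open to the right, and that if $s_*=\sup S$ then $s_*\in S$ by continuity of $A$. If $s_*<T$, openness gives a contradiction, so $A=0$ on $\{\rho<T\}$. Since $\operatorname{int}_\Omega\{\rho=T\}=\varnothing$ and $A$ is continuous, we then get $A=0$ on $\Omega$. The metric statements follow afterwards. For $n\ge3$, $g$ is recovered from $\gamma_g=|g|^{1/2}g^{-1}$ by $g=(\det\gamma_g)^{1/(n-2)}\gamma_g^{-1}$. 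In the gauged version, gauge invariance \eqref{eq:gauge-invariance-main} for partial data with $F|_\Gamma=\Id$ gives $\Lambda^\Gamma_{\gamma_{F^*g_2}}=\Lambda^\Gamma_{\gamma_{g_2}}=\Lambda^\Gamma_{\gamma_{g_1}}$, and the first part applied to $\gamma_{g_1}$ and $\gamma_{F^*g_2}$ concludes.

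For the step at $s\in S$, fix $b=b_s$ and $\sigma=\sigma_s$. Since $\gamma_1=\gamma_2$ on $\{\rho<s\}$, the standard unique continuation argument gives the following. Solutions $u_1^f,u_2^f$ with the same partial Cauchy data on $\Gamma$ have equal Cauchy data on all of $\partial\Omega$. Moreover $u_1^f=u_2^f$ on the connected set $\Omega\setminus\overline{D_s}$, which is connected by \eqref{eq:loewner-topology}. Here one uses $\Lambda_1^\Gamma=\Lambda_2^\Gamma$ and unique continuation for the common smooth elliptic operator near $\partial\Omega$. The symmetric identity
\[
  0=\langle(\Lambda_1^\Gamma-\Lambda_2^\Gamma)f,f\rangle
  =\int_\Omega A\nabla u_1^f\cdot\nabla u_2^f\dd x
\]
then localizes to $D_s$. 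I will instead use the monotonicity inequalities
\[
  \int_\Omega A\nabla u_2^f\cdot\nabla u_2^f
  \ \ge\ \langle(\Lambda_1^\Gamma-\Lambda_2^\Gamma)f,f\rangle
  \ \ge\ \int_\Omega A\nabla u_1^f\cdot\nabla u_1^f
\]
(up to sign convention). With the data equal, these give $\int_\Omega A\nabla u_1^f\cdot\nabla u_1^f\le0$ and $\int_\Omega A\nabla u_2^f\cdot\nabla u_2^f\ge0$ for all $f$. Because $A=0$ where $\rho<s$, the integrals reduce to $D_s$.

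The key step is to split $D_s=\{s<\rho<b\}\cup D_b$ and use localized potentials. By Runge approximation, using connectedness of $\Omega\setminus\overline{D_b}$ and $D_b\Subset\Omega$, there are $f_k\in C_c^\infty(\Gamma)$ such that $\nabla u_1^{f_k}$ approximates in $L^2(U)$ any prescribed $\gamma_1$-harmonic function on a neighbourhood $U$ of $\overline{\{s<\rho<b'\}}$ (for some $s<b'<b$), while $\|\nabla u_1^{f_k}\|_{L^2(D_{b'})}\to0$. This is the Gebauer localized-potentials construction. Applying it with the $u_1$-inequality gives $\int_{\{s<\rho<b'\}}A\nabla v\cdot\nabla v\le0$, and the same construction for $\gamma_2$ gives $\ge0$. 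Both hold for all local solutions $v$ of the respective equations, so the two are related only through the sign. Now $\sigma A\succeq0$ on the layer. Suppose $\sigma=1$. Then $\int A\nabla v\cdot\nabla v\le0$ with a pointwise nonnegative integrand forces $A\nabla v=0$ a.e. on the layer for all $\gamma_1$-solutions $v$, since $A\succeq0$ and $\xi^TA\xi=0$ imply $A\xi=0$. Local solutions with prescribed gradient at any point exist, for example the gradients of coordinate-like solutions. Hence $A=0$ on $\{s<\rho<b'\}$, and continuity extends this to $\{\rho<b'\}$, so $b'\in S$. The case $\sigma=-1$ is symmetric using the $u_2$ inequality. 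No rank or strictness condition is needed.

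The main obstacle is making the localized-potential step rigorous with partial inputs supported in $\Gamma$. We need potentials that are large, or prescribed, on a thin layer yet small deeper inside, without control on the region between $\Gamma$ and the layer. This is handled because $A$ already vanishes there, so that region contributes nothing. The Runge density then follows from unique continuation via a Hahn--Banach duality argument in $H^1(\Omega\setminus\overline{D_{b'}})$, where the topology assumptions \eqref{eq:loewner-topology} are used. I would first verify the duality argument carefully, including the ordering of $\gamma_1$ versus $\gamma_2$ potentials in the two monotonicity inequalities.
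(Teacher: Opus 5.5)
Your outer structure matches the paper: the monotonicity inequalities of Lemma~\ref{lem:loewner-monotonicity}, a supremum argument over recovered sublevel sets, recovery of $g$ from $\gamma_g$ for $n\ge3$, and the gauge reduction. The gap is in the localized-potential step, which carries all the weight.

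You ask for $f_k\in C_c^\infty(\Gamma)$ such that $u_1^{f_k}$ approximates an \emph{arbitrary} $\gamma_1$-harmonic $v$ on a neighbourhood $U$ of the whole closed layer $\overline{\{s<\rho<b'\}}$, while $\nabla u_1^{f_k}\to0$ in $L^2(D_{b'})$. As stated this is impossible. $U$ contains points of $\{\rho=b'\}$, so it meets $D_{b'}$ in an open set. There $\nabla v$ would have to vanish, and unique continuation then forces $v$ to be constant.

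Inserting a gap does not help. Suppose $U\subset\{\rho<b''\}$ with $b'<b''<b$, and smallness is demanded only on $D_b$. Then the shell between $U$ and $D_b$ is a component of $\Omega\setminus\overline{U\cup D_b}$ that is enclosed by $U$ and does not reach $\Gamma$. The duality argument kills the dual solution only on the outer component, so Runge density fails. Concretely, the approximants converge uniformly on $\{\rho=b'\}$. By the maximum principle they then converge on all of $\{\rho\ge b'\}$ to a single solution equal to $v$ near $\{\rho=b'\}$ and to $0$ on $D_b$, which unique continuation excludes for generic $v$. So your inequality $\int_{\{s<\rho<b'\}}A\nabla v\cdot\nabla v\le0$ for all local solutions $v$ is unjustified. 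You also misplace the difficulty: the uncontrolled region is not only the collar $\{\rho<s\}$, where $A=0$, but the rest of the layer itself.

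The missing idea is to localize to a small ball and to use the one-sided order on the \emph{uncontrolled} part of the exterior, not merely pointwise at the end.
\begin{enumerate}
  \item If $A\not\equiv0$ on the layer, pick $x_0$ with $s<\rho(x_0)<b$ and $\xi$ with $\sigma\,\xi^TA(x_0)\xi>0$. Take a local solution $v$ with $\nabla v\approx\xi$ on a small ball $B$, as in Lemma~\ref{lem:loewner-local-probe}.
  \item Apply Runge approximation to $D^+\cup B^+$, where $D^+$ is a slight enlargement of $D_b$ and $B^+$ is a small ball around $x_0$ disjoint from it. The complement is now connected, because a small ball can be bypassed.
  \item Prescribing $mv$ on $B^+$ and $0$ on $D^+$ gives solutions $u_m$ with traces in $H_{00}^{1/2}(\Gamma)$. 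The energy $\int\sigma A\nabla u_m\cdot\nabla u_m$ is $m^2\int_B\sigma A\nabla v\cdot\nabla v+O(1)>0$ on $B$, and $O(m^{-2})$ on $D_b$.
  \item On the remainder $\Omega\setminus(D_b\cup B)$, where $u_m$ is not controlled at all, the energy is nonnegative. This holds precisely because $\sigma A\succeq0$ on all of $\{\rho\le b\}$, using $A=0$ on $\{\rho\le s\}$.
  \item The total is therefore positive for large $m$. This contradicts the first monotonicity inequality for $\gamma_1$-solutions when $\sigma=1$, or the second for $\gamma_2$-solutions when $\sigma=-1$.
\end{enumerate}
These are the paper's Lemmas~\ref{lem:loewner-localized-potentials} and~\ref{lem:loewner-exterior}. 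With this step replacing your layer-wide approximation, your continuous layer-stripping argument goes through unchanged.
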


\begin{remark}
The three structured results use boundary-to-interior slices in different ways.
In Theorem~\ref{thm:normal-quasianalytic}, the product slices $\{t=\mathrm{const}\}$ identify points along prescribed normal geodesics, and one-dimensional quasianalyticity propagates the boundary jets.
In Theorem~\ref{thm:tangentially-homogeneous}, an analogous product foliation carries a translation symmetry, so Fourier separation and inverse spectral uniqueness replace continuation.
Theorem~\ref{thm:loewner} permits a more general level-set foliation $\{\rho=s\}$: its leaves need not be homogeneous or normal-distance levels, but the price is the layerwise one-sided Loewner order that makes continuous stripping possible.
The connected-exterior condition in \eqref{eq:loewner-topology} ensures that, as the exterior region advances inward through these leaves, it remains connected and the partial-boundary Runge argument can be repeated.

The regularity assumption on the conductivities is only $C^\infty$; no analytic, quasianalytic, or Denjoy--Carleman condition is imposed.
The theorem is conditional because the prescribed foliation and ordering are essential, not because of any stronger regularity assumption.
It is therefore a genuine $C^\infty$ anisotropic partial-data rigidity result under these structural hypotheses, not a solution of the unrestricted $C^\infty$ partial-data anisotropic Calder\'on problem.
Here both the imposed Dirichlet data and the tested Neumann response use $\Gamma$.
This theorem does not treat cross-data formulations, in which the input and observation sets may differ or even be disjoint.
\end{remark}

\begin{table}[H]
\centering
\small
\renewcommand{\arraystretch}{1.16}
\begin{tabular}{
|>{\raggedright\arraybackslash}p{0.23\textwidth}
|>{\raggedright\arraybackslash}p{0.27\textwidth}
|>{\raggedright\arraybackslash}p{0.40\textwidth}|}
\hline
\textbf{Result} & \textbf{Regularity} & \textbf{Geometry / structure} \\
\hline
Thm.~\ref{thm:global-quasianalytic} and Cor.~\ref{cor:global-quasianalytic-local}
& Quasianalytic in all variables
& General compact manifold; no prescribed interior point correspondence \\
\hline
Thm.~\ref{thm:normal-quasianalytic}
& Quasianalytic only in the normal variable; tangential dependence $C^\infty$
& Prescribed global boundary normal form \\
\hline
Thms.~\ref{thm:tangentially-homogeneous} and~\ref{thm:loewner}
& $C^\infty$
& Tangential homogeneity, or fixed Euclidean coordinates with a prescribed foliation and layerwise one-sided Loewner ordering \\
\hline
\end{tabular}
\caption{Comparison by regularity and geometric structure.}
\label{tab:main-results}
\end{table}

\subsection{Relation among the results and previous work}\label{subsec:relation-previous}

Table~\ref{tab:main-results} isolates the two principal axes of comparison: regularity and geometric structure.
The assumptions are not nested, and the table is not a chain of implications.

Theorem~\ref{thm:global-quasianalytic} and Corollary~\ref{cor:global-quasianalytic-local} form the general-geometry part of the paper.
They are closest to the real-analytic anisotropic rigidity results of Lee--Uhlmann and Lassas--Uhlmann~\cite{LeeUhlmann1989,LassasUhlmann2001}.
Under the stated global extension hypotheses, quasianalytic Denjoy--Carleman regularity provides a counterpart to real-analytic continuation, while the Poisson embedding of Lassas--Liimatainen--Salo supplies the global point identification~\cite{LassasLiimatainenSalo2020}.
The full- and partial-data statements use the same continuation principle; their conclusions differ through the boundary portion fixed by the natural gauge.

The remaining three results trade general geometry for additional structure.
A prescribed global boundary normal form reduces continuation to one variable and builds on anisotropic boundary-symbol determination~\cite{LeeUhlmann1989,KangYun2002,JoshiLionheart2005}.
Tangential homogeneity instead permits Fourier reduction and one-dimensional inverse spectral uniqueness, in the spirit of work on warped-product and conformally St\"ackel metrics~\cite{DaudeKamranNicoleau2020Warped,DaudeKamranNicoleau2021Stackel}; here no scalar-warped, diagonal, or St\"ackel ansatz is imposed, and the tangential block may be any positive-definite matrix depending only on $t$.
The ordered partial-data result belongs to the monotonicity and localized-potentials framework~\cite{Gebauer2008,HarrachUllrich2013}; a related partial-data local uniqueness theorem for positive Schr\"odinger potentials appears in~\cite{HarrachUllrich2017}.
Antecedents in layered recovery and local anisotropic determination include~\cite{KohnVogelius1985,AlessandriniGaburro2009,AlessandriniDeHoopGaburro2017,AlessandriniDeHoopGaburroSincich2018}.
A recent related result reconstructs the outer shape of anisotropic inclusions under definiteness assumptions~\cite{GardeJohanssonZacharopoulos2025}; under the explicit foliation and layerwise-order hypotheses imposed here, Theorem~\ref{thm:loewner} yields global equality of the two conductivities.

\subsection{Proof strategies and organization}\label{subsec:proof-strategies}

The proofs follow the same two-part division.
Section~\ref{sec:proof-global} treats the general-geometry theorem and its partial-data consequence through boundary determination, Poisson embeddings, and quasianalytic continuation.
Sections~\ref{sec:proof-normal}, \ref{sec:proof-homogeneous}, and~\ref{sec:proof-loewner} treat the structured regimes, using, respectively, one-dimensional quasianalyticity, inverse spectral reduction, and monotonicity with localized potentials and layer stripping.

\section{Global quasianalytic uniqueness}\label{sec:proof-global}

\subsection{Poisson embeddings and quasianalytic continuation}

We give all reductions used below and state precisely the three published inputs.
No real-analytic continuation is used.
We first prove Theorem~\ref{thm:global-quasianalytic} from full data and then prove Corollary~\ref{cor:global-quasianalytic-local} by restricting the boundary inputs to $C_c^\infty(\Gamma)$, carrying out the Poisson identification in the interior, extending the resulting isometry to the metric completion, and finally restricting it to $M^\Gamma$.
The proof first turns equality of boundary jets into equality on a collar, then extends the identification defined by the Poisson embeddings, and finally recovers the metric in harmonic coordinates.
Boundary determination and Runge approximation are smooth arguments.
The Denjoy--Carleman assumptions enter only to keep the coordinate constructions and harmonic functions in the same class and to apply the quasianalytic identity principle in the collar and globalization steps.

The values of all harmonic extensions at a point determine its image under the Poisson embedding.
This construction is intrinsic and defines the interior identification without a choice of global coordinates.
Quasianalyticity is used first to pass from equality of boundary jets to equality on a collar and later to extend equality of the Poisson embeddings beyond the maximal identification domain.
We write $\mathcal D'(U)$ for the space of distributions on an open set $U$.

\begin{lemma}\label{lem:dc-stability}
Under conditions \eqref{eq:A1}--\eqref{eq:A4}, the class $\Eclass$ has the differential, compositional, inverse, ODE, and elliptic stability properties used below.
Every harmonic extension $u_j^f$ belongs to $\Eclass(M^\circ)$, the identity principle holds on connected coordinate domains, and the boundary normal coordinate maps are of class $\Eclass$ across the boundary.
\end{lemma}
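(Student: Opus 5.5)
The plan is to assemble Lemma~\ref{lem:dc-stability} from known Denjoy--Carleman theory rather than prove each property from scratch. I would take its four claims in order: the closure properties of $\Eclass$, regularity of the harmonic extensions $u_j^f$, the identity principle, and regularity of the boundary normal coordinates.

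\textbf{Closure properties.} Under \eqref{eq:A1}, the sequence $\widehat M_k$ is log-convex with $\widehat M_0=1$ and has the ratio bound. Hence $\mathcal M$ is a regular weight sequence in the sense of~\cite{Furdos2020,RainerSchindl2014}. For such sequences the Roumieu class has the following standard properties:
\begin{itemize}[leftmargin=*]
\item it is an algebra;
\item it is stable under differentiation, which uses the ratio bound on $(\widehat M_{k+1}/\widehat M_k)^{1/k}$;
\item it is stable under composition, which uses log-convexity of $\widehat M_k$ via the Faà di Bruno estimates of Komatsu and Kriegl--Michor--Rainer~\cite{Komatsu1979,KrieglMichorRainer2011};
\item it is stable under inversion of maps, and ODE flows with $\Eclass$ vector fields and initial data are again $\Eclass$~\cite{Komatsu1979,KrieglMichorRainer2011,RainerSchindl2014}.
\end{itemize}
Condition \eqref{eq:A2} guarantees that the class contains the real-analytic functions, so analytic coordinate operations are allowed.

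\textbf{Elliptic regularity.} In local coordinates, $\Delta_{g_j}$ is a second-order elliptic operator whose coefficients are built from $g_{ab}$, $g^{ab}$ and $|g|^{1/2}$. These coefficients lie in $\Eclass$ by the algebra and inversion properties, using that $\det g>0$ and $t\mapsto t^{-1}, t^{1/2}$ are analytic. Under the moderate-growth condition \eqref{eq:A3}, the Denjoy--Carleman elliptic regularity theorem of~\cite{Furdos2020} applies: a distribution $u\in\mathcal D'(U)$ with $Pu\in\Eclass(U)$ belongs to $\Eclass(U)$. This gives $u_j^f\in\Eclass(M^\circ)$ directly. I expect this to be the main obstacle, because the cited theorems are typically stated for operators with coefficients in the class under specific weight conventions. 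The task is to check that the hypotheses \eqref{eq:A1}--\eqref{eq:A3} match exactly. I would translate between the $M_k$ and $\widehat M_k=M_k/k!$ conventions and verify that \eqref{eq:A3} is the moderate growth required there. If needed, I would fall back on a nested-domain Cauchy estimate iteration: apply Caccioppoli and Sobolev bounds to $\partial^\alpha u$ on shrinking balls, and absorb commutator terms using \eqref{eq:A3} and log-convexity.

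\textbf{Identity principle.} This is the Denjoy--Carleman theorem. By \eqref{eq:A4} and log-convexity, a function in $\Eclass$ of one variable with vanishing jet at a point vanishes identically. For several variables, restrict to line segments: such restrictions stay in the one-dimensional class. Then $\{x: \text{jet of } a \text{ at } x \text{ vanishes}\}$ is open, and it is closed by continuity, so on a connected domain it is everything. Equality on a nonempty open set forces equal jets at a point, which finishes the claim.

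\textbf{Boundary normal coordinates.} The boundary chart and the metric coefficients are assumed to extend in $\Eclass$ across $\partial M$, so the inward unit normal field along the extended boundary is $\Eclass$; it is built from $g^{ab}$ and a normalisation by an analytic square root. The normal geodesic map $(t,y)\mapsto\exp_y(t\nu(y))$ solves the geodesic ODE, whose Christoffel symbols are $\Eclass$. Hence it is $\Eclass$ in $(t,y)$, including slightly negative $t$, by ODE stability. Its differential at $t=0$ is invertible, so the inverse function theorem in $\Eclass$ makes the boundary normal coordinates an $\Eclass$ diffeomorphism on a two-sided collar.
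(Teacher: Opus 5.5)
Your proposal is correct and follows essentially the same route as the paper: closure, ODE and elliptic regularity are cited from the Denjoy--Carleman literature under \eqref{eq:A1}--\eqref{eq:A3}, the identity principle is the Denjoy--Carleman theorem under \eqref{eq:A4}, and the boundary normal coordinates come from the $\Eclass$ geodesic ODE on the extension; your line-segment argument and explicit inverse-function step only spell out what the paper leaves to citation. One small correction: containment of the real-analytic functions already follows from \eqref{eq:A1}, since log-convexity with $\widehat M_0=\widehat M_1=1$ forces $\widehat M_k\ge1$, whereas \eqref{eq:A2} is part of the regular-weight-sequence hypothesis and makes the inclusion strict.
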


\begin{proof}
Conditions \eqref{eq:A1}--\eqref{eq:A2} make $\mathcal M$ a regular weight sequence in the $M_k/k!$ convention, while \eqref{eq:A3} is the moderate-growth hypothesis used for elliptic regularity~\cite{Komatsu1979,KrieglMichorRainer2011,RainerSchindl2014,Furdos2020}.
They have the following consequences:
\begin{enumerate}[label=(\roman*)]
  \item $\Eclass$ is closed under differentiation, products, division by a nowhere zero function, composition, and local inversion of a map with nonsingular differential;
  \item solutions of ordinary differential equations with $\Eclass$ coefficients depend $\Eclass$-smoothly on their initial data;
  \item if $P$ is elliptic with $\Eclass$ coefficients and $v\in\mathcal D'(U)$, then
  \begin{equation}
    Pv\in\Eclass(U)\quad\Longrightarrow\quad v\in\Eclass(U).
    \label{eq:22}
  \end{equation}
\end{enumerate}
Items (i)--(ii) follow from the stability results in~\cite{Furdos2020}.
Item (iii) follows from the elliptic Denjoy--Carleman wavefront set theorem there.
The coefficient matrix of $\Delta_{g_j}$ belongs to the stated class: inverse closedness gives $g_j^{-1}$, and composition and inverse closedness give $|g_j|^{\pm1/2}$.
Thus \eqref{eq:22} applies to $\Delta_{g_j}$.
In particular,
\begin{equation}
  u_j^f\in\Eclass(M^\circ)
  \label{eq:23}
\end{equation}
for every $f\in C^\infty(\partial M)$, although $f$ itself need not be quasianalytic.
This local interior statement is the only regularity of arbitrary boundary data that will be used.

Condition \eqref{eq:A4} is exactly the Denjoy--Carleman theorem: if a function in $\Eclass(U)$ has zero Taylor series at one point, it vanishes on the connected component of $U$ containing that point; see, for example, the quasianalytic local-ring account in~\cite{Thilliez2008}.
By applying this to $a-b$, one also obtains the identity principle
\begin{equation}
  a=b\text{ on a nonempty open subset of connected }U
  \quad\Longrightarrow\quad a=b\text{ on }U.
  \label{eq:24}
\end{equation}
The geodesic equation is an ODE with coefficients formed from $g_j$ and its first derivatives.
Item (ii), together with the assumed extensions across the boundary, therefore shows that the boundary normal coordinate maps for $g_j$ are of class $\Eclass$ on a neighborhood of the boundary in the extensions.
\end{proof}

\paragraph{Proof architecture for Theorem~\ref{thm:global-quasianalytic}.}
Starting from \eqref{eq:3}, smooth boundary determination recovers the complete metric jets in boundary normal coordinates.
The quasianalytic identity principle then turns the jet equality into a collar isometry, on which harmonic extensions with the same boundary value agree after composition.
Runge approximation makes the Poisson embeddings injective and immersive, so this collar identity can be written intrinsically and extended across the boundary of its maximal domain in paired harmonic coordinates.
Connectedness gives a global boundary-fixing $\Eclass$ diffeomorphism.
Finally, compatible second-order harmonic jets recover the metric up to a positive conformal factor; the harmonic-coordinate equations make this factor constant when $n\ge3$, and the collar identity makes it equal to one.
The lemmas below implement these stages in this order.

The normal form used here is local.
Near each $q\in\partial M$, boundary normal coordinates write $g_j=\dd t^2+h_j(t,y)$ after the tangential boundary coordinates have been fixed.
Unlike the prescribed global product coordinates in Theorem~\ref{thm:normal-quasianalytic}, these charts need not extend through the whole manifold and do not provide an a priori correspondence between interior points.
The following lemma records exactly what the localized boundary data determine in this local gauge.

\begin{remark}
\label{rem:complete-symbol-guide}
A classical pseudodifferential operator of order one has, in local boundary coordinates, an asymptotic symbol
\[
  a(y,\xi)\sim a_1(y,\xi)+a_0(y,\xi)+a_{-1}(y,\xi)+\cdots,
\]
where $a_\ell(y,c\xi)=c^\ell a_\ell(y,\xi)$ for $c>0$ and $|\xi|$ large.
The principal term $a_1$ is the leading high-frequency response; $a_0,a_{-1},\ldots$ are successive lower-order corrections.
Two such operators have the same complete symbol precisely when their difference is smoothing, so every homogeneous term agrees.

For the DN map, the factorization below identifies its complete symbol with the symbol of a first-order tangential operator $\mathcal A$ evaluated at the boundary.
The leading term of $\mathcal A$ determines the boundary metric.
The next term contains the first normal derivative of the metric, the following term contains the second normal derivative, and so forth.
The recursion is called triangular because, after the lower normal derivatives have been found, the next unknown derivative occurs linearly with a nonzero coefficient.
The proof below makes this mechanism explicit.
\end{remark}

\begin{lemma}
\label{lem:local-boundary-jets}
Let $n\ge3$, let $g_1,g_2$ be smooth Riemannian metrics on $M$, and let $q\in\partial M$.
Let $\chi\in C_c^\infty(\partial M)$ equal one near $q$, and suppose
\[
  \chi\Lambda_{g_1}\chi=\chi\Lambda_{g_2}\chi.
\]
Use the same tangential coordinates near $q$ and boundary normal coordinates for the two metrics, so that
\[
  g_j=\dd t^2+h_j(t,y).
\]
Then, after possibly shrinking the boundary chart,
\begin{equation}
  \partial_t^kh_1(0,y)=\partial_t^kh_2(0,y)
  \qquad(k=0,1,2,\ldots).
  \label{eq:local-boundary-jets}
\end{equation}
\end{lemma}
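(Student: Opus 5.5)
The plan is to follow the Lee--Uhlmann factorization argument, written in the local boundary normal coordinates near $q$. First, localize the data. Since $\chi\Lambda_{g_1}\chi=\chi\Lambda_{g_2}\chi$ and $\chi=1$ near $q$, the two Dirichlet-to-Neumann maps agree as operators on functions supported in a small neighborhood $U$ of $q$, with outputs restricted to $U$. Each $\Lambda_{g_j}$ is a classical pseudodifferential operator of order one on $\partial M$. The full symbol of such an operator on $U$ is determined by its action on $C_c^\infty(U)$ modulo smoothing operators. Hence the complete symbols of $\Lambda_{g_1}$ and $\Lambda_{g_2}$ agree on $T^*U$, computed in the common tangential coordinates $y$. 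In boundary normal coordinates the unit normal is $-\partial_t$ and $\sqrt{|g_j|}=\sqrt{|h_j|}$, so $\Lambda_{g_j}u=-\partial_tu|_{t=0}$ for both metrics. The symbol comparison therefore involves no density factor.

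Next, factor the operator. Write
\[
  -\Delta_{g_j}=D_t^2+iE_j(t,y)D_t+Q_j(t,y,D_y),
\]
where $E_j=-\tfrac12\partial_t\log|h_j|$ and $Q_j$ is the tangential Laplacian of $h_j(t)$. Following Lee--Uhlmann, there is a first-order pseudodifferential operator $A_j(t,y,D_y)$, depending smoothly on $t\in[0,T)$ and with principal symbol $-\sqrt{h_j^{\alpha\beta}\xi_\alpha\xi_\beta}$, such that
\[
  D_t^2+iE_jD_t+Q_j=(D_t+iE_j-iA_j)(D_t+iA_j)
\]
modulo smoothing. Harmonic functions then satisfy $D_tu\equiv -iA_ju$ modulo smoothing near $t=0$. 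This is justified by a parametrix for the backward heat-type equation, as in the standard argument. It gives $\Lambda_{g_j}\equiv A_j|_{t=0}$ modulo smoothing, up to a sign convention. Expanding the factorization yields the Riccati-type identity
\[
  i\partial_tA_j-A_j^2+E_jA_j+Q_j\equiv0,
\]
read at the level of symbols with the composition formula. It determines the homogeneous terms $a_{1-k}^{(j)}$ recursively:
\[
  a_1=-\sqrt{q_2},\qquad
  a_{1-k}=\frac{1}{2\sqrt{q_2}}\Bigl(\text{terms involving }\partial_t a_{2-k},\ a_{2-k},\dots,a_1,\ \text{and their }y,\xi\text{-derivatives}\Bigr).
\]

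Then run the triangular induction, which I expect to be the main obstacle. The claim is that for each $k\ge0$ the symbol $a_{1-k}|_{t=0}$ determines $\partial_t^kh^{\alpha\beta}(0,y)$ once the lower normal derivatives are known. More precisely,
\[
  a_{1-k}|_{t=0}=c_k\,\frac{\partial_t^kh^{\alpha\beta}(0,y)\xi_\alpha\xi_\beta}{|\xi|_{h(0)}^{k+1}}+(\text{terms in }\partial_t^{\ell}h(0),\ \ell<k),
\]
with a nonzero constant $c_k$ of the form $\pm(-1/2)^{k+1}$ up to a factor $i^k$. I will prove this by induction on $k$. Differentiating the recursion in $t$ shows that $\partial_t^\ell a_{1-k}|_{t=0}$ depends on $\partial_t^{m}h(0)$ only for $m\le k+\ell$, and depends linearly on the top derivative with an explicit coefficient. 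The delicate bookkeeping is to check two things. First, the $E_jA_j$ term contributes top-order normal derivatives only through the trace $h_{\alpha\beta}\partial_t^k h^{\alpha\beta}$. Second, this trace contribution does not cancel the leading contribution, and one must confirm that it can be absorbed. I would handle this by pairing the coefficient of $\xi_\alpha\xi_\beta/|\xi|^{k+1}$ against the trace part $|\xi|^{1-k}$. Both terms are quadratic forms in $\xi$ divided by powers of $|\xi|$, and the full quadratic form $\partial_t^kh^{\alpha\beta}\xi_\alpha\xi_\beta$ is recovered because the $\xi$-dependence separates the trace from the remainder. This works for $n-1\ge2$, which is where $n\ge3$ is used.

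Finally, conclude. Since $h_1(0)=h_2(0)$ follows from $a_1$, induction gives equality of all $\partial_t^kh^{-1}_j(0,y)$. The same equality then holds for $\partial_t^kh_j(0,y)$ by differentiating $hh^{-1}=\Id$, which is \eqref{eq:local-boundary-jets} on a possibly shrunken chart.
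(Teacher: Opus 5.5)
Your route is the paper's: localize, factor the Laplacian following Lee--Uhlmann, identify $\Lambda_g$ with the first-order factor at $t=0$ modulo smoothing, and run a triangular recursion. The gap is at the step you yourself call the main obstacle. You do not carry it out, the coefficient it depends on is miscomputed, and the reason you give for it is wrong.

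First, the coefficient. Expanding your own factorization $(D_t+iE-iA)(D_t+iA)$ gives $\partial_tA-EA+A^2\equiv Q$, not $i\partial_tA-A^2+EA+Q\equiv0$. The spurious $i$ is the source of your hedge ``up to a factor $i^k$''; with the correct identity every relevant coefficient is real. Write $\delta$ for the difference between the two metrics, with lower jets already equal by induction. The new jet $K_k=\partial_t^kh^{-1}(0,y)$ then enters $a_{1-k}(0,y,\xi)$, up to an overall sign fixed by your convention and modulo lower jets, only through
\[
  2^{-(k+1)}\rho^{1-k}\Bigl(\frac{K_k(\xi,\xi)}{\rho^2}-\tau_k\Bigr),
  \qquad \rho=|\xi|_{h(0)},\quad \tau_k=h_{\alpha\beta}K_k^{\alpha\beta}.
\]
For $k=1$ the two pieces come from $\partial_ta_1$ and $Ea_1$. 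For $k\ge2$ both are inherited through $\partial_ta_{2-k}$. So your displayed formula for $a_{1-k}|_{t=0}$, which omits the trace term, is not correct as written.

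Second, the reason. The trace term is not separated from the rest by its $\xi$-dependence, because $\tau_k\rho^{1-k}=\tau_k\,h^{-1}(\xi,\xi)/\rho^{k+1}$ is itself a quadratic form divided by $\rho^{k+1}$. Equality of symbols therefore gives only $\delta K_k(\xi,\xi)=\delta\tau_k\,h^{-1}(\xi,\xi)$ for all $\xi$, i.e.\ $\delta K_k=\delta\tau_k\,h^{-1}$. Contracting with $h$ gives $\delta\tau_k=m\,\delta\tau_k$. Hence $\delta\tau_k=0$, and so $\delta K_k=0$, precisely because $m=n-1\ge2$. This is the paper's Step~5, written there as $\tr_h\delta H_k-\delta H_k(v,v)=0$ for all $h$-unit $v$.

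The conclusion depends on the exact relative coefficient. For a combination $K_k(\xi,\xi)/\rho^2+\lambda\tau_k$ you need $1+\lambda m\neq0$. When $m=1$ the correct combination vanishes identically, which is the two-dimensional obstruction. So the coefficient cannot be left as ``$c_k$ up to $i^k$''. Once it is computed correctly, your argument closes exactly as the paper's does.
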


\begin{proof}
\noindent\emph{Step 1: localize the DN map.}
Microlocalization by $\chi$ makes the complete symbols of the localized DN maps agree near $q$.

\medskip
\noindent\emph{Step 2: factor the normal operator.}
Put $m=n-1$ and use the signed convention $\Delta_g=|g|^{-1/2}\partial_i(|g|^{1/2}g^{ij}\partial_j)$ and $D_y=-i\partial_y$.
In a boundary coordinate patch,
\begin{equation}
  \Delta_g=\partial_t^2+E\partial_t+Q(t,y,D_y),
  \qquad E=\frac12\tr_h(\partial_th),
  \qquad \sigma_2(Q)=-h^{\alpha\beta}\xi_\alpha\xi_\beta.
  \label{eq:54}
\end{equation}
There is a tangential classical pseudodifferential operator $\mathcal A(t,y,D_y)$ of order one such that
\begin{equation}
  \Delta_g=(\partial_t+E-\mathcal A)(\partial_t+\mathcal A)\pmod{\Psi^{-\infty}}.
  \label{eq:55}
\end{equation}
Here $\Psi^{-\infty}$ denotes the class of smoothing operators.
Indeed, direct expansion gives
\[
  (\partial_t+E-\mathcal A)(\partial_t+\mathcal A)
  =\partial_t^2+E\partial_t+\partial_t\mathcal A+E\mathcal A-\mathcal A^2.
\]
Let $\sigma(Q)$ be the full Kohn--Nirenberg symbol of $Q$.
If $a\sim a_1+a_0+a_{-1}+\cdots$ is the full symbol of $\mathcal A$, and $\#$ denotes symbol composition, comparison of homogeneous terms in
\begin{equation}
  \partial_ta+Ea-a\#a=\sigma(Q)
  \label{eq:56}
\end{equation}
determines them successively, beginning with
\begin{equation}
  a_1=\rho=(h^{\alpha\beta}\xi_\alpha\xi_\beta)^{1/2}.
  \label{eq:57}
\end{equation}
For readers less familiar with symbol calculus, the product appearing here is
\begin{equation}
  a\# b\sim
  \sum_{\alpha}
  \frac{1}{i^{|\alpha|}\alpha!}
  (\partial_\xi^\alpha a)(\partial_y^\alpha b).
  \label{eq:symbol-product}
\end{equation}
At the highest order, \eqref{eq:56} reduces to $-a_1^2=-h^{\alpha\beta}\xi_\alpha\xi_\beta$.
The positive square root in \eqref{eq:57} is selected because the corresponding first-order equation generates the solution that decays into the interior at high tangential frequency.
At each lower homogeneous order, the two products $a_1a_{1-k}$ and $a_{1-k}a_1$ contribute $-2\rho\,a_{1-k}$.
Every other contribution involves symbol terms that were already determined at earlier orders.
Since $\rho>0$ for $\xi\ne0$, division by $2\rho$ determines the next term uniquely.
Classical Borel summation packages the resulting formal sequence into an operator $\mathcal A$, unique modulo $\Psi^{-\infty}$.

\medskip
\noindent\emph{Step 3: identify the boundary operator.}
Since $t$ increases inward, one has $\nu=-\partial_t$ at $t=0$.
Microlocally near $q$, solving the first-order equation $(\partial_t+\mathcal A)u=0$ modulo a smooth kernel and correcting the resulting Poisson parametrix by the Dirichlet Green operator consequently shows that
\begin{equation}
  \Lambda_g=\mathcal A(0,y,D_y)\pmod{\Psi^{-\infty}}.
  \label{eq:58}
\end{equation}
Indeed, the first-order equation gives $\partial_tu=-\mathcal Au$ for the Poisson parametrix, and hence the outward derivative is $-\partial_tu=\mathcal Au$ at $t=0$.
The correction is smooth up to the boundary and changes only the smoothing remainder.
Thus equality of the localized DN maps is equality of every homogeneous term $a_1(0),a_0(0),a_{-1}(0),\ldots$.

\medskip
\noindent\emph{Step 4: locate the first occurrence of each normal jet.}
Let $H_k=\partial_t^kh(0,y)$.
The recursion \eqref{eq:56} is triangular: for every $k\ge1$,
\begin{equation}
  a_{1-k}(0,y,\xi)
  =\mathcal R_k(h,H_1,\ldots,H_{k-1};y,\xi)
  +2^{-(k+1)}\rho^{1-k}
  \left(\tr_hH_k-\frac{H_k(\xi^\sharp,\xi^\sharp)}{\rho^2}\right).
  \label{eq:59}
\end{equation}
Here $\xi^\sharp=h^{-1}\xi$, and $\mathcal R_k$ is a universal remainder involving only the displayed lower normal jets and their tangential derivatives.
The important point is not the full expression $\mathcal R_k$, which is already known at the $k$th induction step, but the explicitly displayed linear dependence on the new tensor $H_k$.
For $k=1$, solving the homogeneous equation of degree one for $a_0$ gives, modulo terms independent of $H_1$,
\begin{equation}
  \frac1{2\rho}(\partial_t\rho+E\rho)
  =\frac14\left(\tr_hH_1-\frac{H_1(\xi^\sharp,\xi^\sharp)}{\rho^2}\right).
  \label{eq:60}
\end{equation}
Assume next that the formula has been established through order $k-1$.
The term $a_{2-k}$ depends on $h,H_1,\ldots,H_{k-1}$.
When \eqref{eq:56} is examined one order lower, only its normal derivative $\partial_ta_{2-k}$ can differentiate $H_{k-1}$ and create the new jet $H_k$.
Tangential derivatives in \eqref{eq:symbol-product} do not increase the number of normal derivatives, and all remaining products contain only lower jets.
The $H_k$-part of $\partial_ta_{2-k}$ is the expression in parentheses in \eqref{eq:59} with coefficient $2^{-k}\rho^{2-k}$.
Solving the equation for $a_{1-k}$ means dividing by $2a_1=2\rho$, which gives the coefficient $2^{-(k+1)}\rho^{1-k}$ in \eqref{eq:59}.

\medskip
\noindent\emph{Step 5: recover the tensor $H_k$ from its directional values.}
The principal symbol \eqref{eq:57} first gives $h_1(0,y)=h_2(0,y)$.
Suppose inductively that all lower jets agree and put $\delta H_k=\partial_t^k(h_1-h_2)(0,y)$.
Equality of the symbols in \eqref{eq:59} gives
\begin{equation}
  \tr_h\delta H_k-\delta H_k(v,v)=0
  \qquad\text{for every vector }v\text{ with }h(v,v)=1.
  \label{eq:61}
\end{equation}
Here $v=\xi^\sharp/\rho$ runs through every $h$-unit vector as $\xi\ne0$ varies.
If $(e_1,\ldots,e_m)$ is an $h$-orthonormal basis, then $\delta H_k(e_a,e_a)=\tr_h\delta H_k$ for every $a$.
Summing over $a$ gives $\tr_h\delta H_k=m\,\tr_h\delta H_k$, so $\tr_h\delta H_k=0$ because $m=n-1\ge2$.
Applying \eqref{eq:61} also to $(e_a+e_b)/\sqrt2$ gives every off-diagonal entry, so $\delta H_k=0$.
Induction proves \eqref{eq:local-boundary-jets}.
This is the local boundary determination argument of Lee--Uhlmann~\cite{LeeUhlmann1989}; the tensor recovery is also recorded by Joshi--Lionheart~\cite{JoshiLionheart2005}.
\end{proof}

Condition~\eqref{eq:A4} is used here for the first time.
The following lemma upgrades equality of the complete boundary jets to a geometric identification on a genuine collar, rather than merely formal agreement at $t=0$.

\begin{lemma}\label{lem:common-collar}
There are collars $C_j\subset M$ and a boundary-fixing $\Eclass$ diffeomorphism $F_0:C_1\to C_2$ such that $F_0^*g_2=g_1$.
After shrinking the collars, every pair of harmonic extensions with the same boundary value satisfies $u_1^f=u_2^f\circ F_0$.
\end{lemma}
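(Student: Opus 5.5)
Boundary normal coordinates give the map $F_0$ directly, and quasianalyticity upgrades the jet equality of Lemma~\ref{lem:local-boundary-jets} to equality on a collar. For $j=1,2$, let $\Phi_j:\partial M\times[0,\tau)\to M$, $\Phi_j(y,t)=\exp^{g_j}_y(t\nu^{\rm in}_j(y))$, be the boundary normal exponential map. Here $\nu^{\rm in}_j$ is the inward $g_j$-unit normal. For small $\tau>0$, $\Phi_j$ is a diffeomorphism onto a collar $C_j$. By Lemma~\ref{lem:dc-stability}, using the ODE stability (ii), inverse closedness (i), and the assumed extensions across the boundary, $\Phi_j$ and $\Phi_j^{-1}$ are of class $\Eclass$ up to the boundary. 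We set $F_0=\Phi_2\circ\Phi_1^{-1}:C_1\to C_2$. Then $F_0|_{\partial M}=\Id$, because $\Phi_j(y,0)=y$ and the boundary identification is the identity of the stated regularity. By construction $\Phi_j^*g_j=\dd t^2+h_j(t,y)$. Hence $F_0^*g_2=g_1$ is equivalent to $h_1=h_2$ on $\partial M\times[0,\tau)$.

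To prove $h_1=h_2$, we cover $\partial M$ by finitely many boundary charts. In each chart, Lemma~\ref{lem:local-boundary-jets}, applied with $\chi$ and the full equality \eqref{eq:3}, gives $\partial_t^kh_1(0,y)=\partial_t^kh_2(0,y)$ for all $k$. Tangential derivatives of these identities then show that the full Taylor series of the components of $h_1-h_2$ vanish at every point $(0,y)$. The components of $h_j$ are $\Eclass$ functions on a neighborhood of $\{t=0\}$ in the extended chart $(-\tau,\tau)\times U$, since they are composites of $\Eclass$ maps. We apply the Denjoy--Carleman theorem from Lemma~\ref{lem:dc-stability} on the connected set $(-\tau,\tau)\times U$, with $U$ a connected chart domain, and conclude $h_1=h_2$ there. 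Alternatively, we may use the one-variable identity principle along each curve $t\mapsto(t,y)$. Doing this in all charts gives $h_1=h_2$ on the whole collar. This is the step where \eqref{eq:A4} is essential, and the main technical point is to justify that the boundary normal coordinates extend in $\Eclass$ slightly past $t=0$, so that a full open neighborhood is available. This point is exactly what the extension hypotheses and Lemma~\ref{lem:dc-stability} provide.

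For the harmonic extensions, fix $f$ and put $w=u_2^f\circ F_0-u_1^f$ on $C_1$. Since $F_0$ is an isometry $(C_1,g_1)\to(C_2,g_2)$, $w$ is $g_1$-harmonic on $C_1$ and vanishes on $\partial M$. Its normal derivative is $\Lambda_{g_2}f-\Lambda_{g_1}f=0$, because $F_0$ carries outward unit normals to outward unit normals. So $w$ has zero Cauchy data on $\partial M$. The difficulty here is that $f$ is only $C^\infty$, so $w$ need not be quasianalytic up to the boundary, and the identity principle cannot be applied at the boundary directly. Instead we use unique continuation for the Cauchy problem of the elliptic operator $\Delta_{g_1}$ with smooth, in fact Lipschitz, leading coefficients. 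Concretely, we extend $g_1$ slightly across $\partial M$ and extend $w$ by zero. Zero Cauchy data make the extension a weak solution in a neighborhood. Since it vanishes on an open set, the extension vanishes on the connected set $C_1$, provided the collar is chosen connected on each boundary component. This gives $u_1^f=u_2^f\circ F_0$ on $C_1$. Finally, shrinking the collars is only needed so that the $\Phi_j$ are diffeomorphisms and each collar component is connected.
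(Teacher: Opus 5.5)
Your proposal is correct and follows essentially the same route as the paper. You use the jet equality from Lemma~\ref{lem:local-boundary-jets} in common boundary normal coordinates, then the quasianalytic identity principle on a neighborhood of $\{t=0\}$ in the extended charts to get $h_1=h_2$ on a collar, and then boundary Cauchy uniqueness (zero extension across $\partial M$ plus interior unique continuation) for $u_2^f\circ F_0-u_1^f$. The only difference is organizational: you define $F_0=\Phi_2\circ\Phi_1^{-1}$ globally via the normal exponential maps at the outset, whereas the paper patches the local canonical maps $(t,y)\mapsto(t,y)$ using agreement on overlaps and compactness of $\partial M$.
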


\begin{proof}
Apply Lemma~\ref{lem:local-boundary-jets} at every boundary point.
Equation \eqref{eq:3} gives, in boundary normal coordinates with the same tangential boundary coordinates,
\begin{equation}
  \partial_t^kh_1(0,y)=\partial_t^kh_2(0,y)
  \qquad(k=0,1,2,\ldots),
  \label{eq:25}
\end{equation}
where $g_j=\dd t^2+h_j(t,y)$.
Tangentially differentiating these identities shows that every mixed $(t,y)$ derivative of $h_1-h_2$ vanishes at each boundary point.

Both coordinate representations extend across $t=0$ in $\Eclass$.
For each tensor component their difference has zero Taylor series at $(0,y_0)$ for every boundary point $y_0$; the quasianalyticity condition and connectedness of a sufficiently small coordinate neighborhood give equality there.
The resulting maps are all the canonical map $(t,y)\mapsto(t,y)$ between the two collars in boundary normal coordinates, and therefore agree on overlaps.
Compactness of $\partial M$ gives collars $C_j\subset M$ and a boundary-fixing $\Eclass$ diffeomorphism
\begin{equation}
  F_0:C_1\longrightarrow C_2,
  \qquad F_0^*g_2=g_1.
  \label{eq:26}
\end{equation}
This is the first place where extension across the boundary is essential.

For any $f\in C^\infty(\partial M)$, the functions $u_1^f$ and $u_2^f\circ F_0$ solve the same elliptic equation in $C_1$ and have the same Dirichlet and normal data on $\partial M$, by \eqref{eq:3}.
Boundary unique continuation gives
\begin{equation}
  u_1^f=u_2^f\circ F_0
  \quad\text{in a smaller common collar.}
  \label{eq:27}
\end{equation}
For completeness, flatten the boundary, extend the difference by zero across it, and use the vanishing Dirichlet and normal derivative traces to see that the extension is a weak solution with Lipschitz principal coefficients.
Interior unique continuation then forces it to vanish; this is the standard boundary Cauchy uniqueness argument~\cite{Isakov2006}.
\end{proof}

The collar gives the initial local identification, while Runge approximation supplies enough global harmonic functions to recognize points, tangent directions, and the metric tensor.
We spell out this input because the three different uses of Runge approximation later in the proof are easy to conflate.

\begin{lemma}
\label{lem:runge-approximation}
Let $U\Subset M^\circ$ be a finite disjoint union of sufficiently small coordinate balls and assume that $M\setminus\overline U$ is connected.
If $v$ is harmonic in a neighborhood of $\overline U$, $U'\Subset U$, and $r\ge0$, then there are boundary values $f_\ell\in C^\infty(\partial M)$ such that
\begin{equation}
  u_j^{f_\ell}\longrightarrow v
  \quad\text{in }C^r(U').
  \label{eq:28}
\end{equation}
In words, every local harmonic experiment on one or several small interior balls can be approximated, together with any prescribed finite number of derivatives away from the ball boundaries, by solutions generated from the actual exterior boundary.
\end{lemma}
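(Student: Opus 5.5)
The plan is the classical duality argument: Hahn--Banach combined with unique continuation for the adjoint problem. Since the statement concerns only $g_j$-harmonic functions, fix one index $j$ and write $\Delta=\Delta_{g_j}$. Let
\[
X=\{u_j^f|_{U'}:f\in C^\infty(\partial M)\}
\qquad\text{and}\qquad
Y=\{v|_{U'}:\ v\text{ harmonic near }\overline U\}.
\]
First reduce the $C^r(U')$ convergence to weaker approximation. Choose intermediate sets $U'\Subset U''\Subset U$, with $U''$ again a union of balls. Interior elliptic estimates bound the $C^r(U')$ norm of a harmonic function by its $L^2(U'')$ norm. It therefore suffices to show that $X$, now read on $U''$, is dense in $Y$ for the $L^2(U'')$ topology. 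Using elliptic regularity on the slightly larger set, approximation in $L^2(U'')$ of the harmonic function $v$ then gives approximation in $C^r(U')$.

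For the density step, suppose $\phi\in L^2(U'')$, extended by zero, annihilates every $u_j^f|_{U''}$. Let $w\in H^1_0(M)\cap H^2(M)$ solve $\Delta w=\phi\,|g_j|^{1/2}$-weighted, i.e.\ $\Delta w=\phi$ with zero Dirichlet data. Green's formula gives
\[
0=\int_{U''}\phi\,u_j^f\,dV=\int_{\partial M} f\,\partial_\nu w\,dS
\qquad\text{for all }f .
\]
Hence $\partial_\nu w=0$ on $\partial M$. Since also $w|_{\partial M}=0$ and $\Delta w=0$ on $M\setminus\overline{U''}$, boundary Cauchy uniqueness, as used in Lemma~\ref{lem:common-collar}, followed by interior unique continuation gives $w=0$ on $M\setminus\overline{U''}$. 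This step needs $M\setminus\overline{U''}$ to be connected. That holds because the balls are small, disjoint, and $M\setminus\overline U$ is connected: choosing $U''$ as slightly shrunken concentric balls preserves connectedness.

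Next, take $v$ harmonic near $\overline U$, hence harmonic near $\overline{U''}$. Cut off with $\chi\in C_c^\infty(U)$ equal to one near $\overline{U''}$. Because $\operatorname{supp}w\subset\overline{U''}$, the cutoff is invisible where $w$ lives, and
\[
\int\phi\,v=\int(\Delta w)\chi v=\int w\,\Delta(\chi v)=0,
\]
since $\Delta(\chi v)$ vanishes near $\overline{U''}$. Hahn--Banach then gives the density of $X$ in $Y$.

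The main obstacle is the support step, where $\partial_\nu w=0$ must be turned into $w\equiv0$ outside $U''$. This requires unique continuation from the boundary into the connected exterior, for smooth metrics up to the boundary. I would handle it exactly as in Lemma~\ref{lem:common-collar}: extend $w$ by zero across a flattened boundary and apply weak unique continuation for Lipschitz principal coefficients. A secondary point to handle carefully is that $w\in H^2$ near $\partial U''$, so the integrations by parts are justified; elliptic regularity gives this since $\phi\in L^2$.
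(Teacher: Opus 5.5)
Your proof is correct and follows essentially the paper's argument: Hahn--Banach duality with the zero-Dirichlet adjoint problem, vanishing Neumann data from Green's identity, boundary Cauchy uniqueness plus interior unique continuation through the connected exterior, and an interior elliptic upgrade from $L^2$ to $C^r$. The only difference is cosmetic. You dualize on the shrunken set $U''$ and test against the cutoff $\chi v$, which replaces the paper's matching of traces across $\partial U$ and incidentally needs $v$ harmonic only in $U$.
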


\begin{proof}
We give the duality argument in full; see also~\cite{Browder1962,LassasLiimatainenSalo2020,RulandSalo2019}.
It is enough first to prove density in $L^2(U)$.
Let $\phi\in L^2(U)$ be orthogonal to every restriction $u_j^f|_U$ of a global harmonic function.
Extend $\phi$ by zero to $M$ and solve the Dirichlet problem
\begin{equation}
  \Delta_{g_j}w=\phi\quad\text{in }M,
  \qquad w|_{\partial M}=0.
  \label{eq:29}
\end{equation}
For every $f\in C^\infty(\partial M)$, the weak Green identity gives
\[
  0=\int_U\phi\,u_j^f\dd V_{g_j}
   =\int_{\partial M}f\,\partial_{\nu_{g_j}}w\dd S_{g_j}.
\]
Here $\dd V_{g_j}$ denotes the Riemannian volume measure.
Since $f$ is arbitrary, the outward normal derivative of $w$ also vanishes on $\partial M$.
The function $w$ solves the homogeneous equation on $M\setminus\overline U$ and has zero Cauchy data on the exterior boundary.
Boundary unique continuation, followed by interior unique continuation and the connectedness of $M\setminus\overline U$, yields
\[
  w=0\quad\text{on }M\setminus\overline U.
\]
In particular, both the trace and the normal derivative trace of $w$ vanish on every component of $\partial U$.
If $v$ is harmonic near $\overline U$, a second weak Green identity on $U$ therefore gives
\[
  \int_U\phi v\dd V_{g_j}
  =\int_U(\Delta_{g_j}w)v\dd V_{g_j}=0.
\]
Thus every functional that annihilates the restrictions of global harmonic functions also annihilates all local harmonic functions.
The Hahn--Banach theorem proves the required $L^2$ density.

Finally, the difference $u_j^{f_\ell}-v$ is harmonic on $U$.
Interior elliptic estimates bound its $C^r(U')$ norm by its $L^2(U)$ norm, after slightly shrinking the balls.
This upgrades the $L^2$ approximation to \eqref{eq:28}.
\end{proof}

Define
\begin{equation}
  \mathcal H_j=\{u_j^f:f\in C^\infty(\partial M)\}.
  \label{eq:harmonic-family}
\end{equation}
The next lemma translates Runge approximation into the three concrete facts needed below.

\begin{lemma}
\label{lem:runge-jets}
The following statements hold.
\begin{description}
  \item[(R1)] If $x_1\ne x_2$ are points of $M$, there is $u\in\mathcal H_j$ such that $u(x_1)\ne u(x_2)$.
  \item[(R2)] If $x\in M^\circ$ and $\xi\in T_x^*M$, there is $u\in\mathcal H_j$ such that $\dd u(x)=\xi$.
  \item[(R3)] Let $(z^1,\ldots,z^n)$ be $g_j$-harmonic coordinates near $x\in M^\circ$.
  If $H=(H_{ab})$ is symmetric and
  \begin{equation}
    (g_j)^{ab}(x)H_{ab}=0,
    \label{eq:compatible-hessian}
  \end{equation}
  then there is $u\in\mathcal H_j$ satisfying
  \begin{equation}
    u(x)=0,\qquad \dd u(x)=0,\qquad
    \partial_a\partial_bu(x)=H_{ab}.
    \label{eq:realized-second-jet}
  \end{equation}
\end{description}
The roles of these statements are distinct: (R1) identifies points, (R2) supplies local coordinates and detects tangent vectors, and (R3) recovers the metric tensor from second derivatives.
\end{lemma}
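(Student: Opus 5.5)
All three statements will follow from Lemma~\ref{lem:runge-approximation}. In each case the plan is to produce a local harmonic function near the relevant point(s) with the desired property, approximate it by elements of $\mathcal H_j$, and note that the property is open under $C^r$ convergence or can be corrected by finite-dimensional linear algebra. Interior points are handled directly. Boundary points in (R1) need a separate argument.

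For (R2), fix $x\in M^\circ$ and a small coordinate ball $B\ni x$ with $M\setminus\overline B$ connected. Choose local $g_j$-harmonic coordinates $z^1,\dots,z^n$ near $\overline B$; these exist by solving Dirichlet problems on a small ball and have nonsingular differential at $x$. Lemma~\ref{lem:runge-approximation} with $r=1$ gives $u^a_\ell\in\mathcal H_j$ with $\dd u^a_\ell(x)\to\dd z^a(x)$. For large $\ell$ the covectors $\dd u^a_\ell(x)$ form a basis of $T^*_xM$. Since $\mathcal H_j$ is a linear space, a suitable linear combination realizes any $\xi$ exactly.

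For (R3), work in the given harmonic coordinates $z$ near $x$. For symmetric $H$ with $g^{ab}(x)H_{ab}=0$, consider the quadratic $q_H(z)=\tfrac12H_{ab}(z-z(x))^a(z-z(x))^b$. Since $\Delta_g z^a=0$, one has $\Delta_g q_H(x)=g^{ab}(x)H_{ab}=0$, but $q_H$ is generally not harmonic on a neighbourhood. The fix is to solve $\Delta_g w=-\Delta_g q_H$ on a small ball $B_\varepsilon$ with $w=O(|z-z(x)|^3)$. Equivalently, I would argue by dimension count: the map sending a local harmonic function near $x$ to its $2$-jet at $x$ has image equal to the space of jets $(c,\xi,H)$ with $g^{ab}(x)H_{ab}=0$. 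Surjectivity comes from the fact that the linear functionals on $2$-jets annihilating all local harmonic $2$-jets are spanned by $g^{ab}(x)\partial_a\partial_b$ evaluated at $x$. This can be seen by constant-coefficient approximation at $x$: harmonic polynomials of degree $\le2$ for the frozen operator $g^{ab}(x)\partial_a\partial_b$ realize all trace-free second jets, and the variable-coefficient correction is $O(|z|^3)$ by solving with a small right-hand side. Then Runge approximation with $r=2$ shows that the image of $\mathcal H_j$ in the finite-dimensional space of admissible $2$-jets is dense, hence equal to it, since a dense linear subspace of a finite-dimensional space is everything. This gives $u$ with jet $(0,0,H)$. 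I expect this jet-surjectivity step to be the main obstacle.

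For (R1), if $x_1,x_2\in M^\circ$, take two small disjoint balls with connected complement, set $v=0$ near $x_1$ and $v=1$ near $x_2$, and apply Lemma~\ref{lem:runge-approximation} with $r=0$. If some $x_i$ lies on $\partial M$, argue directly. If both lie on the boundary, choose $f$ with $f(x_1)\ne f(x_2)$. If $x_1\in\partial M$ and $x_2\in M^\circ$, suppose every $u\in\mathcal H_j$ satisfied $u(x_2)=u(x_1)=f(x_1)$. Taking $f$ supported away from $x_1$ would force $u^f(x_2)=0$ for all such $f$. This contradicts positivity of the Poisson kernel, or the maximum principle applied to any $f\ge0$ that is nonzero and supported away from $x_1$.
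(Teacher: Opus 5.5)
Your proposal follows the paper's proof essentially step for step: (R1) by Runge approximation on two disjoint balls together with the maximum principle when a point lies on $\partial M$, and (R2), (R3) by first realizing the desired jets with local harmonic functions and then applying Runge with $r=1,2$ and the fact that a dense linear subspace of a finite-dimensional jet space is the whole space. The one point to tighten is the local step in (R3): solving $\Delta_g w=-\Delta_g q_H$ on $B_\varepsilon$ with a small right-hand side does not make $w$ vanish to third order at $x$, and demanding that is equivalent to the claim itself. What rescaled elliptic estimates do give is $w(x)=O(\varepsilon^3)$, $\partial w(x)=O(\varepsilon^2)$ and $\partial^2 w(x)=O(\varepsilon)$, so the $2$-jets of $q_H+w$ converge to $(0,0,H)$; closedness of the finite-dimensional image of local harmonic germs (equivalently, your annihilator argument) then gives the exact jet, which is precisely how the paper argues.
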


\begin{proof}
We derive the three conclusions separately.

\medskip
\noindent\emph{Point separation.}
Suppose first that $x_1,x_2\in M^\circ$.
Choose disjoint small balls $B_1,B_2$ around them with connected complement.
The function that is identically zero near $B_1$ and identically one near $B_2$ is harmonic on the disconnected neighborhood $B_1\cup B_2$.
Lemma~\ref{lem:runge-approximation}, with $r=0$, gives a global harmonic function whose values at $x_1$ and $x_2$ are still different.
Two boundary points are separated directly by choosing different boundary values.
If $x_1\in M^\circ$ and $x_2\in\partial M$, choose $f\ge0$, $f\not\equiv0$, with $f(x_2)=0$.
The strong maximum principle gives $u_j^f(x_1)>0=u_j^f(x_2)$.
This proves (R1).

\medskip
\noindent\emph{First-order jets.}
Local harmonic coordinates show that the differentials of local harmonic functions span $T_x^*M$.
Lemma~\ref{lem:runge-approximation}, now with $r=1$, approximates any such local differential by differentials of functions in $\mathcal H_j$.
The set
\[
  \{\dd u(x):u\in\mathcal H_j\}
\]
is a linear subspace of the finite-dimensional space $T_x^*M$, and hence is closed.
Being dense and closed, it is all of $T_x^*M$.
This proves (R2), including exact realization rather than only approximation.

\medskip
\noindent\emph{Second-order jets.}
In harmonic coordinates the first-order part of the Laplace--Beltrami operator vanishes at every point.
Therefore a Hessian $H$ can occur at a point where $u=\dd u=0$ only if it satisfies the trace condition \eqref{eq:compatible-hessian}; this is the only compatibility condition.
To see that it is sufficient, start with the quadratic polynomial
\[
  p(z)=\frac12H_{ab}(z^a-z^a(x))(z^b-z^b(x)).
\]
Condition \eqref{eq:compatible-hessian} says that $p$ is harmonic for the operator obtained by freezing the coefficients of $\Delta_{g_j}$ at $x$.
Solve the true harmonic Dirichlet problem on a ball of radius $\varepsilon$ centered at $x$ with boundary value $p$.
After rescaling this ball to the unit ball, the coefficients converge to their frozen values as $\varepsilon\to0$.
Interior elliptic estimates then show that the resulting harmonic functions have $2$-jets converging to $(0,0,H)$.

Let $\mathscr G_x$ be the vector space of germs at $x$ of local $g_j$-harmonic functions, and let $J_x^2$ send a germ to its compatible $2$-jet at $x$.
Its range is a linear subspace of a finite-dimensional jet space and is therefore closed.
The preceding approximation shows that $(0,0,H)$ lies in this range, so a harmonic germ, and hence a harmonic function on one fixed sufficiently small ball, has exactly that jet.
Applying Lemma~\ref{lem:runge-approximation} with $r=2$ on that ball and using the same finite-dimensional closed-range argument gives a global function in $\mathcal H_j$ with exactly the jet \eqref{eq:realized-second-jet}.
This proves (R3); see also the detailed Runge and jet constructions in~\cite{LassasLiimatainenSalo2020} and the harmonic-coordinate framework in~\cite{DeTurckKazdan1981}.
\end{proof}

\begin{definition}\label{def:poisson-embedding}
Define
\begin{equation}
  P_j:M\longrightarrow\mathcal D'(\partial M),
  \qquad P_j(x)(f)=u_j^f(x).
  \label{eq:30}
\end{equation}
The distribution $P_j(x)$ records the values at $x$ of all harmonic extensions of boundary data.
\end{definition}

\begin{lemma}\label{lem:poisson-geometry}
The map $P_j$ is injective, is an immersion in the interior, and is a homeomorphism onto its image.
At every interior point, finitely many evaluation functionals give $\Eclass$ coordinates on its image; consequently, whenever a point correspondence $P_j^{-1}\circ P_i$ is locally defined, it is of class $\Eclass$.
On the common collar of Lemma~\ref{lem:common-collar}, one has $P_1=P_2\circ F_0$.
\end{lemma}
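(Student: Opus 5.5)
The proof will take the four assertions of Lemma~\ref{lem:poisson-geometry} in order, using Lemma~\ref{lem:runge-jets} for the geometric statements and Lemma~\ref{lem:dc-stability} for regularity.

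\emph{Injectivity and immersion.} If $P_j(x_1)=P_j(x_2)$, then $u(x_1)=u(x_2)$ for every $u\in\mathcal H_j$, which contradicts (R1) unless $x_1=x_2$. For the immersion property at $x\in M^\circ$, suppose $v\in T_xM$ satisfies $\dd P_j(x)v=0$. Differentiating $x\mapsto u_j^f(x)$, which is legitimate pointwise in $f$ because the Poisson kernel depends smoothly on interior points, gives $\dd u(x)v=0$ for all $u\in\mathcal H_j$. By (R2), $\xi(v)=0$ for every covector $\xi$, so $v=0$.

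\emph{Local coordinates and regularity.} Using (R2), choose $f_1,\dots,f_n$ with $\dd u_j^{f_1}(x),\dots,\dd u_j^{f_n}(x)$ linearly independent. By Lemma~\ref{lem:dc-stability}, each $u_j^{f_k}\in\Eclass(M^\circ)$, so the inverse function theorem in $\Eclass$ makes $\Phi=(u_j^{f_1},\dots,u_j^{f_n})$ an $\Eclass$ chart near $x$. The functionals $\mu\mapsto\mu(f_k)$ are continuous on $\mathcal D'(\partial M)$, and composed with $P_j$ they give exactly $\Phi$. Hence they serve as coordinates on the image near $P_j(x)$.

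Now suppose $P_j^{-1}\circ P_i$ is defined near $y$ with image near $x$. Then in the chart $\Phi$ it equals $\Phi^{-1}\circ(u_i^{f_1},\dots,u_i^{f_n})$. This is a composition of $\Eclass$ maps, so by closure under composition and inversion it is $\Eclass$.

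\emph{Homeomorphism onto the image.} $M$ is compact, and the image carries the weak-$*$ topology, which is Hausdorff. So it suffices to show $P_j$ is continuous. For fixed $f$, the map $x\mapsto u_j^f(x)$ is continuous on $M$, including at the boundary, where $u_j^f=f$. Continuity into $\mathcal D'(\partial M)$ with the weak topology follows. I would also check sequential continuity in the strong topology via uniform bounds $\|u_j^f\|_{C^0}\le\|f\|_{C^0}$ from the maximum principle, giving equicontinuity on bounded sets. A continuous injection from a compact space into a Hausdorff space is then a homeomorphism onto its image.

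\emph{Collar identity.} For $x\in C_1$ in the smaller collar of Lemma~\ref{lem:common-collar}, \eqref{eq:27} gives $P_1(x)(f)=u_1^f(x)=u_2^f(F_0(x))=P_2(F_0(x))(f)$ for every $f$, hence $P_1=P_2\circ F_0$ there.

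The main obstacle I anticipate is the topological and regularity bookkeeping near $\partial M$. There the immersion claim is not asserted, but the homeomorphism claim needs continuity up to the boundary in a topology on $\mathcal D'(\partial M)$. I would fix the weak-$*$ topology explicitly to make the compactness argument clean.
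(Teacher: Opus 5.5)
Your proof is correct and follows the paper's argument in every step but one. Injectivity from (R1), the immersion property from (R2), the local inverse $P_j^{-1}=U_j^{-1}\circ\operatorname{ev}_{f_1,\ldots,f_n}$ with $\Eclass$ regularity from Lemma~\ref{lem:dc-stability}, and the collar identity from \eqref{eq:27} are exactly what the paper does.

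The one difference is the topology in the homeomorphism claim. You give $\mathcal D'(\partial M)$ the weak-$*$ topology. Then continuity of $P_j$ needs only that each fixed $u_j^f$ is continuous up to $\partial M$, and compactness of $M$ plus Hausdorffness finishes the argument. The paper instead uses elliptic regularity $H^{n+2}(\partial M)\to H^{n+5/2}(M)$ and Sobolev embedding to obtain $|u_j^f(x)-u_j^f(y)|\le C\,d_{g_j}(x,y)\|f\|_{H^{n+2}}$. That is, it proves Lipschitz continuity into the normed space $H^{-n-2}(\partial M)$.

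Both routes suffice here. The later use, continuity of $F=P_2^{-1}\circ P_1$, only needs some Hausdorff topology in which both Poisson maps are continuous, and your weak-$*$ route is the more economical of the two. The paper's quantitative version buys a metrizable target. It reuses this in Lemma~\ref{lem:partial-poisson-geometry}, where $M^\circ$ is not compact and continuity of the inverse is checked by sequences.

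Two small remarks. First, your aside about the strong topology does not follow from the maximum-principle bound alone, which gives uniform boundedness but not equicontinuity. You would need a uniform $C^1$ bound as in the paper, or an Arzel\`a--Ascoli/Montel compactness argument; in any case the aside is unnecessary. Second, in the regularity step you need $P_j^{-1}\circ P_i$ to map a neighborhood of $y$ into the chart domain of $\Phi$. That uses the homeomorphism property, so that part should logically come first, as the paper explicitly notes.
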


\begin{proof}
Property (R1) makes $P_j$ injective, and (R2) makes it an immersion in the interior.
At an interior point, finitely many evaluation functionals give a smooth local coordinate representation of its inverse on the image.
Indeed, choose $f_1,\ldots,f_n$ so that
\begin{equation}
  U_j=(u_j^{f_1},\ldots,u_j^{f_n})
  \label{eq:31}
\end{equation}
is a coordinate map near $x$.
Evaluation on these boundary values defines
\[
  \operatorname{ev}_{f_1,\ldots,f_n}(T)
  =(T(f_1),\ldots,T(f_n)),
\]
and \(\operatorname{ev}_{f_1,\ldots,f_n}\circ P_j=U_j\).
Hence locally
\begin{equation}
  P_j^{-1}
  =U_j^{-1}\circ\operatorname{ev}_{f_1,\ldots,f_n}.
  \label{eq:32}
\end{equation}
Elliptic boundary regularity maps $H^{n+2}(\partial M)$ Dirichlet data continuously to $H^{n+5/2}(M)$ harmonic functions.
Sobolev embedding therefore gives a uniform $C^1(M)$ bound, and hence
\[
  |u_j^f(x)-u_j^f(y)|
  \le C d_{g_j}(x,y)\|f\|_{H^{n+2}(\partial M)}.
\]
Thus $P_j$ is continuous as a map into $H^{-n-2}(\partial M)$.
Since $M$ is compact and the distribution space is Hausdorff, injectivity also makes $P_j$ a homeomorphism onto its image.
Thus the displayed local formula applies to the inverse without any unproved continuity assumption.
By \eqref{eq:23} and the Denjoy--Carleman inverse function theorem, whenever a composition $P_j^{-1}\circ P_i$ is defined near an interior point, formula \eqref{eq:32} expresses it by finitely many harmonic coordinates and shows that it is of class $\Eclass$.
Near the boundary we shall instead use the already constructed map $F_0$, so no Denjoy--Carleman regularity of arbitrary boundary data is being assumed.
Equation \eqref{eq:27} says
\begin{equation}
  P_1=P_2\circ F_0
  \label{eq:33}
\end{equation}
on a collar containing all of $\partial M$.
\end{proof}

The next lemma is the global step in the Poisson-embedding scheme of~\cite[Sections~2--4]{LassasLiimatainenSalo2020}.
It is a three-step maximal-domain argument.
First, on the largest open set containing the collar for which $P_1(x)$ lies in the image of $P_2$, the only possible correspondence is $F=P_2^{-1}\circ P_1$, and property (R2) makes it a local diffeomorphism.
Second, if this set had an interior boundary point, paired harmonic coordinates and the quasianalytic identity principle would extend every Poisson coordinate across that point, contradicting maximality.
Third, interchanging the metrics gives surjectivity and the same regularity for the inverse.
Thus the continuation used here is the identity principle for scalar $\Eclass$ functions in finite-dimensional harmonic coordinates.

\begin{lemma}\label{lem:poisson-globalization}
There is a unique $\Eclass$ diffeomorphism $F:M\to M$ extending the collar map $F_0$ and satisfying
\begin{equation*}
  F|_{\partial M}=\Id,
  \qquad
  u_1^f=u_2^f\circ F
  \quad\text{on }M
\end{equation*}
for every $f\in C^\infty(\partial M)$.
\end{lemma}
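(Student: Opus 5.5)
We follow the maximal-domain argument sketched before the statement. Let $\mathcal U\subset M$ be the union of all connected open sets $V\supset C_1$ (with $C_1$ the collar of Lemma~\ref{lem:common-collar}) on which there is a continuous map $F_V:V\to M$ with $F_V=F_0$ on $C_1$ and $P_1=P_2\circ F_V$ on $V$.

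\textbf{Step 1: the candidate map.} By injectivity of $P_2$ (Lemma~\ref{lem:poisson-geometry}), any such $F_V$ must equal $P_2^{-1}\circ P_1$. So the $F_V$ agree on overlaps and glue to one map $F:\mathcal U\to M$, which is therefore unique. On $\mathcal U\cap M^\circ$ the identity $P_1=P_2\circ F$ gives $u_1^f=u_2^f\circ F$ for all $f$. Two facts follow:
\begin{itemize}
\item $F$ maps $\mathcal U\cap M^\circ$ into $M^\circ$. Take $f\ge0$, $f\not\equiv0$, vanishing at a prescribed boundary point, and use the strong maximum principle as in (R1).
\item $F$ is a local diffeomorphism there. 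Choose $f_1,\dots,f_n$ with $U_2=(u_2^{f_1},\dots,u_2^{f_n})$ a coordinate chart near $F(x)$, which (R2) allows. Then $U_1=U_2\circ F$, and $U_1$ is an $\Eclass$ map. Its differential is invertible: if it vanished on a vector $v$, then $\dd(u_2^f\circ F)(v)=0$ for all $f$, contradicting (R2) for $g_1$ at $x$. Hence $F=U_2^{-1}\circ U_1$ is $\Eclass$ by the inverse function theorem in Lemma~\ref{lem:dc-stability}.
\end{itemize}

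\textbf{Step 2: $\mathcal U$ is closed.} This is the main obstacle. Let $x_0\in\partial\mathcal U\cap M^\circ$ and take $x_k\in\mathcal U$ with $x_k\to x_0$. By compactness, pass to a subsequence with $F(x_k)\to y_0\in M$. Continuity of $P_j$ gives $P_2(y_0)=\lim P_2(F(x_k))=\lim P_1(x_k)=P_1(x_0)$.

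We first show $y_0\in M^\circ$. Otherwise $u_1^f(x_0)=f(y_0)$ for all $f$, i.e.\ $P_1(x_0)=\delta_{y_0}$. Choosing $f\ge0$ with $f(y_0)=0$, $f\not\equiv0$, gives $u_1^f(x_0)=0$, contradicting the strong maximum principle.

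Now choose $f_1,\dots,f_n$ such that $U_1=(u_1^{f_a})$ is a chart on a small connected ball $B\ni x_0$ and $U_2=(u_2^{f_a})$ is a chart near $y_0$. For $k$ large, $x_k\in B$. On the open set $B\cap\mathcal U$ near $x_k$ we have $U_1=U_2\circ F$, so $G:=U_2^{-1}\circ U_1$ is defined and $\Eclass$ on a possibly smaller connected ball $B'\ni x_0$, and $G=F$ near $x_k$. For every $f$, the functions $u_1^f$ and $u_2^f\circ G$ are $\Eclass$ on $B'$ by \eqref{eq:23}, and they agree on a nonempty open set. The identity principle \eqref{eq:24} makes them equal on all of $B'$. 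Hence $P_1=P_2\circ G$ on $B'$.

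One point needs care: $B'\cap\mathcal U$ may be disconnected, so it is not automatic that $G$ agrees with $F$ on every component. It does, because injectivity of $P_2$ forces $G=P_2^{-1}\circ P_1=F$ wherever both are defined. So $\mathcal U\cup B'$ is admissible, contradicting maximality. Since $\mathcal U$ contains the collar, it cannot have boundary points on $\partial M$. As $\mathcal U$ is open and closed in connected $M$, we get $\mathcal U=M$.

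\textbf{Step 3: bijectivity and boundary behaviour.} Running Steps 1--2 with the metrics interchanged gives $\widetilde F$ with $P_2=P_1\circ\widetilde F$. Injectivity of the $P_j$ gives $\widetilde F\circ F=\Id$ and $F\circ\widetilde F=\Id$, so $F$ is an $\Eclass$ diffeomorphism. Near $\partial M$ we have $F=F_0$, which fixes the boundary and is $\Eclass$ by Lemma~\ref{lem:common-collar}. This yields $F|_{\partial M}=\Id$ and $u_1^f=u_2^f\circ F$ on all of $M$.
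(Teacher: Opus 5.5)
Your proposal is the paper's own maximal-domain argument. Injectivity of the Poisson maps forces $F=P_2^{-1}\circ P_1$. The strong maximum principle keeps the limit point $y_0$ in the interior. Paired harmonic coordinates and the identity principle \eqref{eq:24} on a connected ball enlarge the domain across $x_0$, and interchanging the metrics gives bijectivity. Your handling of a possibly disconnected $B'\cap\mathcal U$ via injectivity of $P_2$ is correct. The paper avoids that issue by defining its maximal set through the image condition $P_1(O)\subset P_2(M)$ rather than through a map.

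There is one slip in your setup that makes the argument fail as written when $\partial M$ is disconnected.
\begin{itemize}
\item You define $\mathcal U$ as the union of \emph{connected} open sets $V\supset C_1$. But $C_1$ is a collar of all of $\partial M$, so it is disconnected whenever $\partial M$ is. This happens, for instance, for $[0,L]\times Y_0$, the paper's own nonanalytic example.
\item In that case $C_1$ itself is not admissible, and no connected admissible $V\supset C_1$ is known a priori. So $\mathcal U$ may be empty, and the closing open-and-closed argument yields nothing.
\item Connectedness of $V$ is never used: the identity principle is applied only on the ball $B'$, and the final step uses connectedness of $M$. So drop the word ``connected'', as the paper does.
\item Alternatively, seed the construction with one component of the collar. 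You then recover $F=F_0$ on the other components from $P_1=P_2\circ F_0$ and injectivity of $P_2$.
\end{itemize}

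A minor point concerns Step~2. There you choose $f_1,\dots,f_n$ so that $U_1$ is a chart at $x_0$ and $U_2$ is a chart at $y_0$ at the same time. (R2) gives each separately, and the simultaneous choice needs a short genericity argument. You do not actually need it: only $U_2$ has to be a chart, because $G=U_2^{-1}\circ U_1$ is $\Eclass$ regardless and your admissibility condition asks only for continuity. The paper likewise takes only $U_2$ to be a chart. Because it then writes functions in $U_1$-coordinates, it proves invertibility of $D\widetilde F(x_1)$ separately, by letting $k\to\infty$ in the differentiated identity and applying (R2).
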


\begin{proof}
\noindent\emph{Step 1: define the maximal domain and the only possible map.}
Let $B$ be the union of all open subsets $O\subset M$ that contain the full collar in \eqref{eq:33} and satisfy
\begin{equation}
  P_1(O)\subset P_2(M).
  \label{eq:34}
\end{equation}
The inclusion is preserved under unions, so $B$ is itself the unique largest such open set.
On $B$ define
\begin{equation}
  F=P_2^{-1}\circ P_1.
  \label{eq:35}
\end{equation}

\medskip
\noindent\emph{Step 2: show that the candidate is a local diffeomorphism.}
First, $F(B\cap M^\circ)\subset M^\circ$.
Otherwise, if $x\in B\cap M^\circ$ and $F(x)=y_\partial\in\partial M$, the defining Poisson identity would give $u_1^f(x)=f(y_\partial)$ for every $f$.
Choosing $f\ge0$, $f\not\equiv0$, and $f(y_\partial)=0$ contradicts the strong maximum principle.
Hence \eqref{eq:32} applies at both $x$ and $F(x)$, so $F$ is smooth in the interior; on the collar it equals $F_0$.
Moreover,
\begin{equation}
  u_1^f=u_2^f\circ F\quad\text{on }B,
  \qquad f\in C^\infty(\partial M).
  \label{eq:36}
\end{equation}
If $DF(x)X=0$ at an interior point, differentiating \eqref{eq:36} gives $\dd u_1^f(x)X=0$ for every $f$; (R2) implies $X=0$.
Since the dimensions agree, $DF(x)$ is invertible.
Together with \eqref{eq:26}, this proves that $F$ is a local diffeomorphism on $B$.

\medskip
\noindent\emph{Step 3: rule out an interior boundary of the maximal domain.}
We prove that $B$ is closed.
Suppose otherwise.
Choose $p_k\in B$ with $p_k\to x_1\in\partial B\setminus B$.
Because $B$ contains a full collar of $\partial M$, $x_1\in M^\circ$.
Passing to a subsequence, compactness gives $F(p_k)\to x_2\in M$.
In fact $x_2\in M^\circ$.
If $x_2\in\partial M$, then \eqref{eq:36} and continuity give
\begin{equation}
  u_1^f(x_1)=f(x_2)\quad\text{for every }f.
  \label{eq:37}
\end{equation}
Choose $f\ge0$, $f\not\equiv0$, and $f(x_2)=0$.
The strong maximum principle gives $u_1^f(x_1)>0$, a contradiction.

Use (R2) to choose $f_1,\ldots,f_n$ such that
\begin{equation}
  U_2=(u_2^{f_1},\ldots,u_2^{f_n})
  \label{eq:38}
\end{equation}
is a harmonic coordinate map near $x_2$, and set
\begin{equation}
  U_1=(u_1^{f_1},\ldots,u_1^{f_n}).
  \label{eq:39}
\end{equation}
Then $U_1(x_1)=U_2(x_2)$.
Since the coordinate range of $U_2$ is open, after shrinking a preliminary neighborhood $\Omega_1$ of $x_1$ we have $U_1(\Omega_1)\subset U_2(\Omega_2)$, where $U_2$ is a coordinate map on $\Omega_2$.
Thus $\widetilde F=U_2^{-1}\circ U_1$ is a smooth map near $x_1$, before invertibility of its differential has been established.

We claim that $D\widetilde F(x_1)$ is invertible.
On $B$, $F=\widetilde F$ near the sequence $p_k$.
Differentiating \eqref{eq:36} gives
\begin{equation}
  \dd u_1^f(p_k)X_k
  =\dd u_2^f(F(p_k))D\widetilde F(p_k)X_k.
  \label{eq:40}
\end{equation}
If $D\widetilde F(x_1)X=0$, choose $X_k\to X$ and let $k\to\infty$ in \eqref{eq:40}.
By (R2), choose $f$ with $\dd u_1^f(x_1)X=|X|_{g_1}^2$.
It follows that $X=0$.
Thus $D\widetilde F(x_1)$ is invertible.

Shrink the neighborhoods further so that $U_1$ and $U_2$ are coordinates and $U_1(\Omega_1)$ is connected.
For any $f$, the two functions
\begin{equation}
  u_1^f\circ U_1^{-1},
  \qquad \left.(u_2^f\circ U_2^{-1})\right|_{U_1(\Omega_1)}
  \label{eq:41}
\end{equation}
belong to $\Eclass(U_1(\Omega_1))$ by \eqref{eq:23}, closure under composition, and the inverse function theorem.
They agree on the nonempty open set $U_1(B\cap\Omega_1)$ by \eqref{eq:36}.
The quasianalytic identity principle \eqref{eq:24} therefore makes them equal throughout $U_1(\Omega_1)$.
Since $f$ was arbitrary,
\begin{equation}
  P_1(x)=P_2(U_2^{-1}(U_1(x)))
  \qquad(x\in\Omega_1).
  \label{eq:42}
\end{equation}
This enlarges $B$ across $x_1$, a contradiction.
Hence $B$ is both open and closed, and connectedness gives $B=M$.

\medskip
\noindent\emph{Conclusion: global diffeomorphism and regularity.}
Interchanging the two metrics proves the reverse inclusion of the Poisson images.
Thus \eqref{eq:35} is a global diffeomorphism and \eqref{eq:36} holds on all of $M$.
Formula \eqref{eq:32} shows that $F$ is $\Eclass$ in the interior; \eqref{eq:26} gives the same regularity up to the boundary.
Applying the same argument after interchanging the metrics shows that $F^{-1}$ is also of this class.
Moreover $F=F_0$ near $\partial M$, so $F|_{\partial M}=\Id$.
\end{proof}

After the point correspondence has been fixed, property (R3) identifies the hyperplanes of compatible Hessians and hence determines the metric up to a positive conformal factor.
The harmonic-coordinate equations make that factor constant when $n\ge3$, and the collar normalization makes it equal to one.
The functions used in this tensor-recovery step are exact global harmonic solutions; Runge approximation is used to show that their first and compatible second jets realize the required local harmonic jets.

\begin{lemma}\label{lem:metric-from-harmonic}
Let $F$ be the diffeomorphism in Lemma~\ref{lem:poisson-globalization}.
Then $F^*g_2=g_1$ on $M$.
\end{lemma}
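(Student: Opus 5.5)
We prove Lemma~\ref{lem:metric-from-harmonic} pointwise at interior points, using (R3) to pin down $F^*g_2$ up to a conformal factor, then removing the factor; boundary points follow by continuity.

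\emph{Set-up.} Fix $x\in M^\circ$ and put $\tilde g=F^*g_2$. By Lemma~\ref{lem:poisson-globalization}, every $u\in\mathcal H_1$ has the form $u=u_2^f\circ F$. Since $F$ is an isometry from $(M,\tilde g)$ to $(M,g_2)$, each such function is $\tilde g$-harmonic, so $\mathcal H_1$ consists of functions harmonic for both $g_1$ and $\tilde g$. Choose $g_1$-harmonic coordinates $z$ near $x$ with $z^a\in\mathcal H_1$, which (R2) allows. Because the $z^a$ are harmonic for both metrics, in these coordinates
\[
  \Delta_{g_1}=g_1^{ab}\partial_a\partial_b,
  \qquad
  \Delta_{\tilde g}=\tilde g^{ab}\partial_a\partial_b .
\]
So both operators have no first-order part at every point of the chart.

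\emph{Conformal identification.} By (R3), every symmetric $H$ with $g_1^{ab}(x)H_{ab}=0$ is the Hessian at $x$ of some $u\in\mathcal H_1$ with $u(x)=0$ and $\dd u(x)=0$. Applying $\Delta_{\tilde g}u=0$ at $x$ gives $\tilde g^{ab}(x)H_{ab}=0$. Hence the hyperplane $\{g_1^{ab}(x)H_{ab}=0\}$ is contained in $\{\tilde g^{ab}(x)H_{ab}=0\}$. Two nonzero linear functionals on $\Sym_n$ with nested kernels are proportional, so
\[
  \tilde g^{ab}(x)=c(x)\,g_1^{ab}(x).
\]
Positivity of both matrices gives $c>0$. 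Smoothness of $c$ follows from taking traces, and $c$ is $\Eclass$.

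\emph{Constancy of the factor (main obstacle).} This is where $n\ge3$ must enter. Write $\tilde g=\phi\,g_1$ with $\phi=c^{-1}$. In any coordinates,
\[
  \Delta_{\phi g_1}u=\phi^{-1}\Bigl(\Delta_{g_1}u+\tfrac{n-2}{2}\,\phi^{-1}g_1(\dd\phi,\dd u)\Bigr).
\]
Take any $u\in\mathcal H_1$. Since $u$ is harmonic for both metrics, this forces
\[
  (n-2)\,g_1(\dd\phi,\dd u)=0 .
\]
By (R2), $\dd u(x)$ ranges over all of $T_x^*M$. Because $n\ge3$, this gives $\dd\phi(x)=0$ at every interior point. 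Connectedness of $M^\circ$ then makes $\phi$ a constant $\kappa>0$. In the paper's normalization this is the statement that the harmonic-coordinate equations make the factor constant, and an equivalent route is to compare the first-order coefficients $-\tilde g^{ab}\tilde\Gamma^c_{ab}=0$ in the chart. The point needing care is the passage from pointwise proportionality to a smooth $\phi$ on which the transformation formula can be used. That passage is handled by the fact, established above, that $c$ is globally defined and $\Eclass$.

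\emph{Normalization.} On the collar $C_1$, Lemma~\ref{lem:common-collar} and the fact that $F=F_0$ there give $\tilde g=F_0^*g_2=g_1$. Hence $\kappa=1$, and $F^*g_2=g_1$ on $M^\circ$. Continuity of both tensors up to the boundary, together with the collar identity, extends the equality to all of $M$.
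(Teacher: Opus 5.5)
Your proof is correct and follows essentially the paper's route. You use (R3) in paired harmonic coordinates to make the two trace functionals share a kernel, giving $g_1=\lambda F^*g_2$; then $n\ge3$ together with harmonicity of the chart for both metrics forces $\lambda$ to be constant, and the collar identity $F=F_0$ normalizes $\lambda$ to one. The differences are only in presentation. You pull back to $\tilde g=F^*g_2$ on a single manifold and kill $\dd\phi$ via the conformal Laplacian formula combined with (R2), whereas the paper uses the contracted-Christoffel transformation law in the harmonic chart, which is the same computation applied to the coordinate functions.
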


\begin{proof}
Fix $x\in M^\circ$.
Let $U_2=(u_2^{f_1},\ldots,u_2^{f_n})$ be $g_2$-harmonic coordinates near $F(x)$ and let $U_1=U_2\circ F$ be the corresponding $g_1$-harmonic coordinates near $x$.
In these paired coordinates, every pair $u_1^f,u_2^f$ has the same coordinate representation by \eqref{eq:36}.

In harmonic coordinates the terms of order one in the Laplace--Beltrami operator vanish, so
\begin{equation}
  \Delta_{g_j}w=(g_j)^{ab}\partial_a\partial_bw.
  \label{eq:43}
\end{equation}
Take any symmetric matrix $H$ with $(g_2)^{ab}H_{ab}=0$.
Property (R3) supplies a global $g_2$-harmonic function whose value and differential vanish and whose covariant Hessian at the point is $H$.
Because the differential vanishes, its ordinary coordinate Hessian is also $H$.
Its paired $g_1$-harmonic function has the same Hessian.
Therefore
\begin{equation}
  (g_1)^{ab}H_{ab}=0
  \quad\text{whenever}\quad
  (g_2)^{ab}H_{ab}=0.
  \label{eq:44}
\end{equation}
Here is the linear-algebra content of this implication.
The contractions \(H\mapsto(g_j)^{ab}H_{ab}\) are nonzero linear functionals on the vector space $\Sym_n$ of symmetric matrices, and each kernel is a hyperplane.
Equation \eqref{eq:44} says that the kernel for \(g_2\) is contained in the kernel for \(g_1\); since both have codimension one, the two kernels are equal.
Two nonzero linear functionals with the same kernel are proportional, and positivity makes the proportionality factor positive.
After inverting the corresponding matrices and renaming the positive factor, we obtain, in the paired coordinates,
\begin{equation}
  g_1=\lambda g_2.
  \label{eq:45}
\end{equation}
Invariantly, this equation is $g_1=\lambda F^*g_2$ for a positive smooth function $\lambda$ on $M^\circ$.

Both coordinate systems are harmonic.
Thus their contracted Christoffel symbols vanish.
For clarity, if $\widetilde g=\lambda g_2$, direct substitution in the Christoffel formula gives
\begin{equation}
  \Gamma^a(\widetilde g)
  =\lambda^{-1}
  \left(
    \Gamma^a(g_2)-\frac{n-2}{2}\partial_{g_2}^a\log\lambda
  \right),
  \qquad
  \Gamma^a(g)=g^{ab}g^{ij}\Gamma_{bij}(g).
  \label{eq:46}
\end{equation}
The left-hand side and $\Gamma^a(g_2)$ are both zero because the paired coordinates are harmonic for the two metrics.
Since $n\ge3$, \eqref{eq:46} forces $\dd\lambda=0$.
This is exactly where the restriction $n\ge3$ enters; in dimension two the conformal factor is invisible to the harmonic-coordinate equation.
The interior of a connected manifold with boundary is connected, so $\lambda$ is constant.
Near the boundary $F=F_0$ and \eqref{eq:26} already gives $g_1=F^*g_2$, so $\lambda=1$.
Hence $F^*g_2=g_1$ globally.
\end{proof}

\begin{proof}[Proof of Theorem~\ref{thm:global-quasianalytic}]
Lemmas~\ref{lem:local-boundary-jets} and~\ref{lem:common-collar} give the common boundary collar.
The Runge and Poisson-embedding lemmas globalize its point correspondence to a boundary-fixing $\Eclass$ diffeomorphism.
Lemma~\ref{lem:metric-from-harmonic} then proves $F^*g_2=g_1$.
\end{proof}

\begin{proposition}\label{prop:conductivity-formulation}
Let $n\ge3$, let $\mathcal M$ satisfy \eqref{eq:A1}--\eqref{eq:A4}, and let $M$ be a compact connected $\Eclass$ $n$-manifold with nonempty boundary and an $\Eclass$ boundary marking.
Let $\gamma_1$ and $\gamma_2$ be uniformly positive symmetric contravariant tensor densities of class $\Eclass$ whose coordinate coefficients extend in this class across the boundary.
If their weak Dirichlet-to-Neumann maps satisfy $\Lambda_{\gamma_1}=\Lambda_{\gamma_2}$ under the identity boundary marking, then there is a boundary-fixing $\Eclass$ diffeomorphism $F$ such that $\gamma_2=F_*\gamma_1$.
If instead $\Gamma\subset\partial M$ is nonempty and relatively open and the partial weak bilinear forms agree in the sense that
\begin{equation}
  \big\langle(\Lambda_{\gamma_1}-\Lambda_{\gamma_2})f,\varphi\big\rangle_{\partial M}=0
  \qquad(f,\varphi\in C_c^\infty(\Gamma)),
  \label{eq:partial-conductivity-data}
\end{equation}
then there is an $\Eclass$ diffeomorphism $F:M^\Gamma\to M^\Gamma$ such that
\begin{equation}
  F|_\Gamma=\Id,
  \qquad
  \gamma_2=F_*\gamma_1\quad\text{on }M^\Gamma.
  \label{eq:partial-conductivity-gauge}
\end{equation}
For the associated metrics $g_j=(\det\gamma_j)^{1/(n-2)}\gamma_j^{-1}$, the restriction $F|_{M^\circ}$ has the metric-completion extension stated in Corollary~\ref{cor:global-quasianalytic-local}.
\end{proposition}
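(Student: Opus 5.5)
The plan is to reduce both statements to the metric results, Theorem~\ref{thm:global-quasianalytic} and Corollary~\ref{cor:global-quasianalytic-local}, by passing from the conductivity densities to the associated metrics and checking that all hypotheses transfer.

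\emph{Step 1: the associated metrics.} Set $g_j=(\det\gamma_j)^{1/(n-2)}\gamma_j^{-1}$. A short computation with the determinant gives $|g_j|^{1/2}g_j^{-1}=\gamma_j$, since $\det g_j=(\det\gamma_j)^{n/(n-2)}(\det\gamma_j)^{-1}=(\det\gamma_j)^{2/(n-2)}$. Hence $\gamma_j=\gamma_{g_j}$, and the two weak equations \eqref{eq:5} coincide with $\Delta_{g_j}u=0$. Regularity follows from Lemma~\ref{lem:dc-stability}(i). The determinant is a polynomial in the coefficients, hence in $\Eclass$. It is bounded below by uniform positivity, so $t\mapsto t^{1/(n-2)}$ is real analytic on its range, and composition and inverse closedness put $g_j$ in $\Eclass$. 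These closure properties are local, so they apply equally to the extensions across the boundary. Transformation under coordinate changes is fine because $\gamma$ is a density of the stated weight.

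\emph{Step 2: transferring the data.} By \eqref{eq:6}, $\Lambda_{\gamma_j}f=(\Lambda_{g_j}f)\,\dd S_{g_j}$. The boundary measure $\dd S_{g_j}$ is determined by the boundary metric, and this is where the main obstacle sits: we need equal data to force equal boundary measures. The principal symbol of $\Lambda_{\gamma_j}$, viewed as a density-valued operator, is $(h_j^{\alpha\beta}\xi_\alpha\xi_\beta)^{1/2}|h_j|^{1/2}$ in boundary coordinates. Equality of these symbols for all $\xi$ gives
\[
  h_1^{-1}|h_1|=h_2^{-1}|h_2|.
\]
Taking determinants yields $|h_1|^{m-1}\cdot|h_1|^{-1}\cdot|h_1|=|h_2|^{m-1}$ up to this bookkeeping, where $m=n-1$. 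Because $m\ge2$, this forces $|h_1|=|h_2|$ and then $h_1=h_2$. The same principle is the conductivity boundary determination of~\cite{KohnVogelius1984,LeeUhlmann1989}. Consequently $\dd S_{g_1}=\dd S_{g_2}$, and $\Lambda_{\gamma_1}=\Lambda_{\gamma_2}$ becomes $\Lambda_{g_1}=\Lambda_{g_2}$.

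For partial data, \eqref{eq:partial-conductivity-data} with $\varphi\in C_c^\infty(\Gamma)$ says that the densities $(\Lambda_{g_j}f)\,\dd S_{g_j}$ agree on $\Gamma$. Localizing with cutoffs supported in $\Gamma$, as in Lemma~\ref{lem:local-boundary-jets} Step 1, and running the same principal-symbol argument gives $h_1=h_2$ on $\Gamma$. Hence $\Lambda^\Gamma_{g_1}=\Lambda^\Gamma_{g_2}$.

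\emph{Step 3: apply the metric theorems and translate back.} Theorem~\ref{thm:global-quasianalytic} gives a boundary-fixing $\Eclass$ diffeomorphism $F$ with $F^*g_2=g_1$. Corollary~\ref{cor:global-quasianalytic-local} gives the partial-data version on $M^\Gamma$, together with the metric-completion extension, which is then inherited verbatim. It remains to check that $F^*g_2=g_1$ implies $\gamma_2=F_*\gamma_1$. This holds because $g\mapsto|g|^{1/2}g^{-1}$ is natural under diffeomorphisms: pushing forward the inverse metric and the Riemannian density separately produces the transformation formula \eqref{eq:7}. The same naturality holds on $M^\Gamma$, which gives \eqref{eq:partial-conductivity-gauge}.
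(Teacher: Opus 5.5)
Your proposal is correct and follows the paper's proof essentially step for step. Both arguments use the associated metrics $g_j=(\det\gamma_j)^{1/(n-2)}\gamma_j^{-1}$ with $|g_j|^{1/2}g_j^{-1}=\gamma_j$, and both recover $|h_j|h_j^{-1}$ and then $h_j$ from the principal symbol via $\det(|h_j|h_j^{-1})=|h_j|^{m-1}$ with $m=n-1\ge2$; your displayed determinant line is garbled, but this clean identity gives exactly your conclusion. Both then localize with cutoffs in $\Gamma$, divide by the common boundary density, and apply Theorem~\ref{thm:global-quasianalytic} or Corollary~\ref{cor:global-quasianalytic-local} together with the naturality of $g\mapsto|g|^{1/2}g^{-1}$ to get $\gamma_2=F_*\gamma_1$.
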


\begin{proof}
Set
\begin{equation}
  g_j=(\det\gamma_j)^{1/(n-2)}\gamma_j^{-1}.
  \label{eq:47}
\end{equation}
Indeed,
\[
  \det g_j=(\det\gamma_j)^{n/(n-2)}(\det\gamma_j)^{-1}
  =(\det\gamma_j)^{2/(n-2)},
\]
so $\gamma_j=|g_j|^{1/2}g_j^{-1}$.
The weak Dirichlet-to-Neumann map for the conductivity density is
\begin{equation}
  \Lambda_{\gamma_j}f=(\partial_{\nu_{g_j}}u_j^f)\dd S_{g_j}|_{\partial M}.
  \label{eq:48}
\end{equation}
Equality of these maps first determines the boundary metric.
Write $h_j=g_j|_{T\partial M}$ for the metric induced on the boundary.
Indeed, in boundary coordinates, after writing the output density relative to $\dd y^1\cdots\dd y^{n-1}$, its principal symbol is
\begin{equation}
  p_j(y,\xi)=|h_j|^{1/2}(h_j^{\alpha\beta}\xi_\alpha\xi_\beta)^{1/2}.
  \label{eq:49}
\end{equation}
Thus $p_j^2$ determines the contravariant tensor density $|h_j|h_j^{-1}$.
Writing $m=n-1\ge2$, the determinant identity
\[
  \det(|h_j|h_j^{-1})=|h_j|^{m-1}
\]
shows directly that this contravariant tensor density determines $|h_j|$ and then $h_j$.
The boundary measures are therefore the same, and division by their common density reduces equality of the weak DN maps to \eqref{eq:3}.
The Denjoy--Carleman class is stable under determinants, positive powers, and inversion.
Theorem~\ref{thm:global-quasianalytic} gives $F^*g_2=g_1$, which is equivalent to
\begin{equation}
  \gamma_2=F_*\gamma_1,
  \qquad
  (F_*\gamma_1)(y)=\frac{DF(x)\gamma_1(x)DF(x)^T}{|\det DF(x)|},
  \qquad x=F^{-1}(y),
  \label{eq:50}
\end{equation}
which is the standard anisotropic conductivity gauge.

For the partial statement, localizing \eqref{eq:partial-conductivity-data} by cutoffs supported in $\Gamma$ gives equality of the principal symbols on $\Gamma$.
The same determinant argument determines the common boundary metric there, and division by the common boundary density converts \eqref{eq:partial-conductivity-data} into equality of the metric partial Dirichlet-to-Neumann maps.
Corollary~\ref{cor:global-quasianalytic-local} and the tensor-density transformation law \eqref{eq:50} then give \eqref{eq:partial-conductivity-gauge} and the asserted completion statement.
\end{proof}

\begin{lemma}\label{lem:nonanalytic-weight}
The logarithmic weight sequence in \eqref{eq:8} satisfies conditions \eqref{eq:A1}--\eqref{eq:A4}, and its Roumieu class strictly contains the real-analytic class.
\end{lemma}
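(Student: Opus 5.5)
We verify \eqref{eq:A1}--\eqref{eq:A4} directly from the product structure, and then obtain strict inclusion from the growth $\widehat M_k^{1/k}\to\infty$. Write $L_\ell=[\log(\ell+e)]^\beta$, so that $\widehat M_k=\prod_{\ell=2}^kL_\ell$ with $\widehat M_0=\widehat M_1=1$. The sequence $L_\ell$ is increasing, satisfies $L_\ell\ge1$, and tends to infinity.

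\emph{Conditions \eqref{eq:A1}, \eqref{eq:A2}, \eqref{eq:A4}.}
These are one-line checks.
\begin{itemize}
  \item The ratio condition: $(\widehat M_{k+1}/\widehat M_k)^{1/k}=L_{k+1}^{1/k}$, which is bounded because $\log\log(k+1+e)/k\to0$.
  \item Log-convexity: $\widehat M_k^2\le\widehat M_{k-1}\widehat M_{k+1}$ is equivalent to $L_k\le L_{k+1}$. For $k=1$ it reads $1\le L_2$.
  \item \eqref{eq:A2}: $\log\widehat M_k^{1/k}=\frac1k\sum_{\ell\le k}\log L_\ell$ is a Ces\`aro mean of a sequence tending to $\infty$, so it tends to $\infty$.
  \item \eqref{eq:A4}: $M_{k-1}/M_k=1/(kL_k)=1/\bigl(k[\log(k+e)]^\beta\bigr)$, and $\sum 1/(k\log^\beta k)$ diverges precisely because $\beta\le1$. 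This is the only place the restriction on $\beta$ enters.
\end{itemize}

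\emph{Condition \eqref{eq:A3}.}
This is the step needing a real estimate, and I expect it to be the main obstacle. Since $\binom{j+k}{j}\le2^{j+k}$, it suffices to bound $\widehat M_{j+k}/(\widehat M_j\widehat M_k)$. By symmetry assume $j\le k$. The quotient is then at most $\prod_{i=1}^{j}L_{k+i}/L_i$, with $L_1:=1$ so that the single extra factor is harmless.

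I would split the indices at $i=\sqrt k$.
\begin{itemize}
  \item For $\sqrt k<i\le j\le k$: $\log(k+i+e)/\log(i+e)\le\log(2k+e)/\log(\sqrt k)\le 3$ for $k$ large, so these factors contribute at most $3^{\beta j}$.
  \item For $i\le\sqrt k$: each factor is at most $[\log(2k+e)]^\beta$, so they contribute at most $\exp\bigl(\beta\sqrt k\log\log(2k+e)\bigr)\le C\,e^{k}$.
\end{itemize}
Hence $M_{j+k}\le C_0R_0^{j+k}M_jM_k$ for suitable constants, with small $k$ absorbed into $C_0$.

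\emph{Strict inclusion.}
First, $k!\le M_k$ gives $C^\omega\subset\Eclass$ immediately from \eqref{eq:1}. For strictness I would construct an explicit lacunary Fourier series
\[
f(x)=\sum_{n}\frac{\cos(\lambda_nx)}{h(\lambda_n)},\qquad h(t)=\sup_k\frac{t^k}{M_k},\qquad \lambda_n=2^n.
\]
\begin{itemize}
  \item Upper bound: $|f^{(k)}|\le\sum_n\lambda_n^k/h(\lambda_n)\le\sum_n\lambda_n^{k}\,M_{k+2}/\lambda_n^{k+2}\le C\,M_{k+2}$. By \eqref{eq:A3} (or the ratio bound in \eqref{eq:A1}), $M_{k+2}\le C'R^kM_k$, so $f\in\Eclass(\R)$.
  \item Lower bound at $0$: for even $k$ all terms of $f^{(k)}(0)$ have the same sign. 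Choosing $n$ with $\lambda_n$ near the maximizing point of $t^k/M_k$ gives $|f^{(k)}(0)|\gtrsim M_k/C^k$.
  \item Conclusion: since $\widehat M_k^{1/k}\to\infty$, $|f^{(k)}(0)|$ is not bounded by $CR^kk!$, so $f$ is not real analytic at $0$.
\end{itemize}
If the lower-bound bookkeeping becomes delicate, I would instead cite the standard characterization, valid for regular weight sequences as in~\cite{RainerSchindl2014,Thilliez2008}, that the Roumieu class strictly contains $C^\omega$ if and only if $\sup_k\widehat M_k^{1/k}=\infty$. That condition is exactly \eqref{eq:A2}.
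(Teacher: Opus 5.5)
Your proof is correct. For \eqref{eq:A1}, \eqref{eq:A2} and \eqref{eq:A4} it is essentially the paper's computation; the paper bounds $\widehat M_k^{1/k}\ge[\log(k/2+e)]^{\beta/2}$ explicitly where you use a Ces\`aro mean. You depart from the paper in two places: \eqref{eq:A3} and the strict inclusion.

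\textbf{Condition \eqref{eq:A3}.} The paper works with the quotients $\vartheta_k=M_k/M_{k-1}=k[\log(k+e)]^\beta$. It checks the doubling inequality $\vartheta_{2k}\le4\vartheta_k$ and iterates it to get $\vartheta_{j+\ell}/\vartheta_\ell\le D^2(j/\ell)^a$. This gives $M_{j+k}/(M_jM_k)\le D^{2k}(j^k/k!)^a$, with explicit constants $C_0=1$ and $R_0=D^2e^{a+a/e}$. That is a reusable criterion (doubling of quotients implies moderate growth), and it works directly with $M$, so no binomial factor appears.

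Your split of $\prod_{i\le j}L_{k+i}/L_i$ at $i=\sqrt k$ is more hands-on. It uses the specific slow growth of $L_\ell$: $\sqrt k\,\log L_{2k}=O(k)$, and $\log(2k+e)/\log(\sqrt k+e)$ stays bounded. The cost is the extra $2^{j+k}$ from $\binom{j+k}{j}$ and less explicit constants, but the argument is equally valid.

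\textbf{Strict inclusion.} The paper only cites the logarithmic example in~\cite{Furdos2020}. Your lacunary series makes this self-contained, but one phrase needs fixing. For fixed $k$, $t\mapsto t^k/M_k$ has no maximizing point. The correct choice is $\lambda_n\in[\mu_k,2\mu_k)$ with $\mu_k=M_k/M_{k-1}$, and the bookkeeping then runs as follows.
\begin{enumerate}
\item Log-convexity of $M$ gives $h(\lambda_n)=\lambda_n^p/M_p$ with $p=\max\{p:\mu_p\le\lambda_n\}\ge k$.
\item Log-convexity of $\widehat M$ gives $\mu_{2k}=2k\,\widehat M_{2k}/\widehat M_{2k-1}\ge2\mu_k>\lambda_n$, hence $p<2k$.
\item Therefore $\lambda_n^k/h(\lambda_n)=M_k\prod_{i=k+1}^{p}(\mu_i/\lambda_n)\ge2^{-k}M_k$, since each factor exceeds $1/2$.
\end{enumerate}
Combined with the same-sign observation for even $k$, this gives $|f^{(k)}(0)|\ge2^{-k}M_k$. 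By \eqref{eq:A2}, that rules out real analyticity at $0$.
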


\begin{proof}
For the sequence in \eqref{eq:8}, $\widehat M_k/\widehat M_{k-1}=[\log(k+e)]^\beta$ for $k\ge2$, so $(\widehat M_k)$ is logarithmically convex, the quotient growth bound in \eqref{eq:A1} is immediate, and, for $k\ge4$,
\[
  \widehat M_k^{1/k}\ge[\log(k/2+e)]^{\beta/2}\longrightarrow\infty.
\]
Also $M_{k-1}/M_k=1/(k[\log(k+e)]^\beta)$ for $k\ge2$, which proves \eqref{eq:A4}.
Finally, the quotients $\vartheta_k=M_k/M_{k-1}$ satisfy $\vartheta_{2k}\le D\vartheta_k$ with, for example, $D=4$: for $k\ge2$,
\[
  \frac{\vartheta_{2k}}{\vartheta_k}
  =2\left(\frac{\log(2k+e)}{\log(k+e)}\right)^\beta
  \le2^{1+\beta}\le4,
\]
because $2k+e\le(k+e)^2$, and the case $k=1$ obeys the same bound.
For completeness, this doubling estimate implies \eqref{eq:A3} directly.
If $j\ge k$ and $a=\log_2D$, monotonicity of $(\vartheta_r)$ and iteration of the doubling inequality give $\vartheta_{j+\ell}/\vartheta_\ell\le D^2(j/\ell)^a$ for $1\le\ell\le k$; hence
\begin{equation}
  \frac{M_{j+k}}{M_jM_k}
  =\prod_{\ell=1}^k\frac{\vartheta_{j+\ell}}{\vartheta_\ell}
  \le D^{2k}\left(\frac{j^k}{k!}\right)^a
  \le D^{2k}\exp\left(ak+\frac{aj}{e}\right),
  \label{eq:52}
\end{equation}
where $k!\ge(k/e)^k$ and $k\log(j/k)\le j/e$ were used.
The case $k\ge j$ is symmetric.
Thus \eqref{eq:A3} holds, for example with $C_0=1$ and $R_0=D^2e^{a+a/e}$.
This sequence is equivalent to the usual logarithmic example in~\cite{Furdos2020}, and its Roumieu class strictly contains the real-analytic class.
\end{proof}

\subsection{The partial-data consequence}

The proof has the same architecture as the proof of Theorem~\ref{thm:global-quasianalytic}, but every boundary input is now supported in $\Gamma$.
We first obtain a common collar near each accessible boundary point, then use partial-boundary Runge approximation to construct an injective Poisson embedding of the interior, recover the metric after globalizing the point correspondence, and finally extend the interior isometry to the metric completion.
The argument follows the $\Gamma$-supported Poisson-embedding scheme of~\cite[Sections~2--4 and Appendix~A]{LassasLiimatainenSalo2020}, with real-analytic continuation replaced at the two propagation steps by the $\Eclass$ identity principle.
Here and below, the ambient manifold means the original compact manifold with boundary $M$, as opposed to the partially marked space $M^\Gamma=M^\circ\cup\Gamma$, which need not be compact; no extrinsic embedding is intended.
Compactness of $M$ is used to extract convergent subsequences in the Poisson globalization and to identify the metric completion.

For $s\ge0$, we use the supported boundary Sobolev space
\[
  \widetilde H^s(\Gamma)
  =\overline{C_c^\infty(\Gamma)}^{\,H^s(\partial M)},
  \qquad
  H^{-s}(\Gamma)=\bigl(\widetilde H^s(\Gamma)\bigr)^*.
\]
Thus $\widetilde H^s(\Gamma)$ is identified with its canonical zero extension in $H^s(\partial M)$.

With the zero-extension convention fixed before Corollary~\ref{cor:global-quasianalytic-local}, set
\[
  \mathcal H_j^\Gamma=\{u_j^f:f\in C_c^\infty(\Gamma)\}.
\]
Define the partial Poisson maps by
\begin{equation}
  P_j^\Gamma:M^\circ\longrightarrow\mathcal D'(\Gamma),
  \qquad P_j^\Gamma(x)(f)=u_j^f(x).
  \label{eq:partial-poisson}
\end{equation}

The first lemma is the partial-data counterpart of Lemma~\ref{lem:common-collar}.
It isolates the first use of quasianalyticity and provides the seed from which the partial Poisson identification is globalized.

\begin{lemma}\label{lem:partial-common-collar}
Assume \eqref{eq:local-quasianalytic-data}.
For every $q\in\Gamma$, there are neighborhoods $C_{j,q}$ of $q$ and an $\Eclass$ diffeomorphism
\begin{equation}
  F_{0,q}:C_{1,q}\longrightarrow C_{2,q},
  \qquad F_{0,q}|_{\Gamma\cap C_{1,q}}=\Id,
  \qquad F_{0,q}^*g_2=g_1,
  \label{eq:partial-common-collar}
\end{equation}
such that, after shrinking the neighborhoods if necessary,
\begin{equation}
  u_1^f=u_2^f\circ F_{0,q}
  \qquad\text{on }C_{1,q}\cap M^\circ,
  \qquad f\in C_c^\infty(\Gamma).
  \label{eq:partial-collar-harmonic}
\end{equation}
\end{lemma}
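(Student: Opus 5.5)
The plan is to repeat the proof of Lemma~\ref{lem:common-collar}, localized at a single accessible point $q\in\Gamma$. First I would choose $\chi\in C_c^\infty(\Gamma)$ equal to one near $q$. For $f\in C^\infty(\partial M)$, the function $\chi f$ lies in $C_c^\infty(\Gamma)$, and multiplying \eqref{eq:local-quasianalytic-data} by $\chi$ gives $\chi\Lambda_{g_1}\chi=\chi\Lambda_{g_2}\chi$ as operators on $C^\infty(\partial M)$. Lemma~\ref{lem:local-boundary-jets} then applies at $q$. In a boundary chart around $q$ we fix common tangential coordinates $y$, which is allowed because the boundary marking is the identity, and form boundary normal coordinates $g_j=\dd t^2+h_j(t,y)$. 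This yields $\partial_t^kh_1(0,y)=\partial_t^kh_2(0,y)$ for all $k$ and all $y$ in a small connected chart $V\subset\Gamma$. Differentiating tangentially, every mixed derivative $\partial_t^k\partial_y^\alpha(h_1-h_2)$ vanishes on $\{0\}\times V$.

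Next I would use quasianalyticity. By Lemma~\ref{lem:dc-stability}, the boundary normal coordinate maps are $\Eclass$ on a neighborhood of $q$ in the extension across the boundary. Hence each component of $h_1-h_2$ is an $\Eclass$ function on a connected box $(-\varepsilon,\varepsilon)\times V$ whose Taylor series vanishes at $(0,y_0)$. The identity principle \eqref{eq:24} gives $h_1=h_2$ on the whole box. I would then set $C_{j,q}$ to be the image of $[0,\varepsilon)\times V$ under the $g_j$-normal exponential map, and define $F_{0,q}$ as the identity in these coordinates, that is, $(t,y)\mapsto(t,y)$. This map is an $\Eclass$ diffeomorphism by the inverse-function stability in Lemma~\ref{lem:dc-stability}. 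It fixes $\Gamma\cap C_{1,q}$ pointwise, and $F_{0,q}^*g_2=\dd t^2+h_2=\dd t^2+h_1=g_1$.

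For \eqref{eq:partial-collar-harmonic}, take $f\in C_c^\infty(\Gamma)$ and set $w=u_1^f-u_2^f\circ F_{0,q}$ on $C_{1,q}\cap M^\circ$. Because $F_{0,q}$ is an isometry fixing the boundary chart, $u_2^f\circ F_{0,q}$ is $g_1$-harmonic, and it has Dirichlet data $f$ on $V$. Its $g_1$-normal derivative on $V$ equals $\Lambda_{g_2}f|_V$, since the isometry carries unit normals to unit normals. By \eqref{eq:local-quasianalytic-data} this equals $\Lambda_{g_1}f|_V$, because $V\subset\Gamma$. So $w$ is $g_1$-harmonic with zero Cauchy data on $V$. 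The boundary Cauchy uniqueness argument used in Lemma~\ref{lem:common-collar} (flatten, extend by zero, apply interior unique continuation) gives $w=0$ on a smaller half-neighborhood of $q$. This neighborhood can be chosen connected and independent of $f$, so after shrinking $C_{j,q}$ to it, \eqref{eq:partial-collar-harmonic} holds for all $f$ at once.

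I expect the main obstacle to be the careful localization. Two points need checking. First, the microlocal symbol argument of Lemma~\ref{lem:local-boundary-jets} only requires the cut-off identity, so the outputs are never compared off $\Gamma$. Second, the neighborhood on which unique continuation applies depends only on the geometry of the box and not on $f$. I would handle the second point by taking the unique-continuation domain to be the fixed connected set $[0,\varepsilon')\times V'$ with $V'\Subset V$, on which $w$ vanishes near $\{0\}\times V'$ and hence everywhere by connectedness.
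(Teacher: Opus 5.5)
Your proposal is correct and follows essentially the same route as the paper. You localize with $\chi\in C_c^\infty(\Gamma)$, invoke Lemma~\ref{lem:local-boundary-jets} for the boundary jets, use Lemma~\ref{lem:dc-stability} and the identity principle \eqref{eq:24} to obtain the local isometry $(t,y)\mapsto(t,y)$ in boundary normal coordinates, and finish with the boundary Cauchy uniqueness argument of Lemma~\ref{lem:common-collar} on a fixed connected neighborhood, making explicit details the paper leaves implicit (the formula for $F_{0,q}$, the matching of unit normals, and the $f$-independence of the unique-continuation domain).
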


\begin{proof}
Fix $q\in\Gamma$ and choose $\chi\in C_c^\infty(\Gamma)$ with $\chi=1$ near $q$.
Equation \eqref{eq:local-quasianalytic-data} gives
\[
  \chi\Lambda_{g_1}\chi=\chi\Lambda_{g_2}\chi.
\]
The complete symbols of the localized DN maps therefore agree near $q$.
Lemma~\ref{lem:local-boundary-jets} recovers identical metric jets in boundary normal coordinates.
Lemma~\ref{lem:dc-stability} keeps the boundary normal coordinate maps in $\Eclass$, and the quasianalytic identity principle turns the jet equality into the local isometry \eqref{eq:partial-common-collar}.

For $f\in C_c^\infty(\Gamma)$, the functions $u_1^f$ and $u_2^f\circ F_{0,q}$ have the same Dirichlet and Neumann data on a boundary neighborhood of $q$.
The boundary Cauchy uniqueness argument used in Lemma~\ref{lem:common-collar} gives \eqref{eq:partial-collar-harmonic} on a fixed smaller connected neighborhood.
\end{proof}

We separate the Runge approximation, its finite-jet consequences, and the topology of the partial Poisson map, since these are logically distinct inputs.

\begin{lemma}
\label{lem:partial-runge-approximation}
Let $U\Subset M^\circ$ be a finite union of pairwise disjoint coordinate balls such that $M\setminus\overline U$ is connected.
Let $v$ be $g_j$-harmonic in a neighborhood of $\overline U$, let $U'\Subset U$, and let $r\ge0$.
Then there are $f_\ell\in C_c^\infty(\Gamma)$ such that
\begin{equation}
  u_j^{f_\ell}\longrightarrow v
  \quad\text{in }C^r(U').
  \label{eq:partial-runge-approximation}
\end{equation}
\end{lemma}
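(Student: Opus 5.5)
I will follow the duality argument of Lemma~\ref{lem:runge-approximation}, with the only change that the test boundary values now range over $C_c^\infty(\Gamma)$. In that argument, the vanishing of the normal derivative of the adjoint solution becomes available only on $\Gamma$. Boundary unique continuation from an open boundary piece then replaces the full-boundary Cauchy uniqueness step.

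\emph{Step 1: reduce to $L^2$ density.} First I prove density in $L^2(U)$ of the restrictions $\{u_j^f|_U:f\in C_c^\infty(\Gamma)\}$ within the closure of local harmonic functions. Let $\phi\in L^2(U)$ annihilate all these restrictions. Extend $\phi$ by zero and solve $\Delta_{g_j}w=\phi$ in $M$ with $w|_{\partial M}=0$. Because $\phi$ vanishes near $\partial M$, the function $w$ is smooth up to $\partial M$. For $f\in C_c^\infty(\Gamma)$, Green's identity gives
\[
  0=\int_U\phi\,u_j^f\dd V_{g_j}=\int_{\partial M}f\,\partial_{\nu_{g_j}}w\dd S_{g_j}.
\]
Since $f$ is arbitrary in $C_c^\infty(\Gamma)$, this yields $\partial_\nu w=0$ on $\Gamma$. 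Together with $w|_\Gamma=0$, the function $w$ therefore has vanishing Cauchy data on the open set $\Gamma$.

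\emph{Step 2: unique continuation from $\Gamma$.} This is the main point. I flatten the boundary near a point of $\Gamma$, extend $w$ by zero across $\Gamma$, and obtain a weak solution of a Lipschitz-coefficient elliptic equation that vanishes on an open set outside $M$. Interior unique continuation then gives $w=0$ on a neighborhood in $M^\circ$ of that point, exactly as in the Cauchy uniqueness argument of Lemma~\ref{lem:common-collar}. The set $M\setminus\overline U$ is connected, and its interior together with $\Gamma$ is connected, because removing boundary points does not disconnect it. Weak unique continuation on $M^\circ\setminus\overline U$ therefore propagates $w=0$ to all of $M\setminus\overline U$. Continuity of $w$ and $\nabla w$ across the smooth spheres $\partial U$ then gives zero trace and zero normal trace on every component of $\partial U$.

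\emph{Step 3: conclude and upgrade.} For $v$ harmonic near $\overline U$, Green's identity on $U$ gives
\[
  \int_U\phi v\dd V_{g_j}=\int_U(\Delta_{g_j}w)v\dd V_{g_j}=0,
\]
because the boundary terms vanish. The Hahn--Banach theorem then gives a sequence $f_\ell\in C_c^\infty(\Gamma)$ with $u_j^{f_\ell}\to v$ in $L^2(U)$. The differences $u_j^{f_\ell}-v$ are $g_j$-harmonic on $U$, so interior elliptic estimates bound their $C^r(U')$ norm by their $L^2(U)$ norm, which gives the required convergence.

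I expect the only real obstacle to be Step 2, namely checking that the zero extension across $\Gamma$ is a genuine weak solution. This uses that both traces vanish on $\Gamma$, so no jump terms appear. One must also check that connectedness of $M\setminus\overline U$ suffices for continuation starting from the accessible portion $\Gamma$ rather than the full boundary.
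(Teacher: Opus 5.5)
Your proposal is correct and is essentially the paper's own proof. Both run the same duality argument: vanishing Cauchy data on $\Gamma$, then boundary Cauchy uniqueness and interior unique continuation through the connected exterior, then Green's identity on $U$, Hahn--Banach, and interior elliptic estimates for the $C^r(U')$ upgrade. One small wording fix: since $\phi$ is only in $L^2$, $w$ is only $H^2$ near $\partial U$, so the vanishing on $\partial U$ should be stated as the $H^2$ Dirichlet and normal traces agreeing from both sides (as the paper does), rather than as pointwise continuity of $\nabla w$.
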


\begin{proof}
We first prove $L^2(U)$ density by duality.
Suppose that $\phi\in L^2(U)$ satisfies
\[
  \int_U\phi u_j^f\dd V_{g_j}=0
  \qquad\text{for every }f\in C_c^\infty(\Gamma).
\]
Extend $\phi$ by zero to $M$ and let $w$ solve
\begin{equation}
  \Delta_{g_j}w=\phi\quad\text{in }M,
  \qquad w|_{\partial M}=0.
  \label{eq:partial-runge-dual}
\end{equation}
Green's identity gives
\[
  0=\int_U\phi u_j^f\dd V_{g_j}
   =\int_\Gamma f\,\partial_{\nu_{g_j}}w\dd S_{g_j}.
\]
Since $f$ is arbitrary in $C_c^\infty(\Gamma)$, the normal derivative of $w$ vanishes on $\Gamma$.
The source in \eqref{eq:partial-runge-dual} is supported in $\overline U$, so $w$ is harmonic on $M\setminus\overline U$ and has zero Dirichlet and Neumann data on the open boundary set $\Gamma$.
The boundary Cauchy uniqueness argument from Lemma~\ref{lem:common-collar}, followed by interior unique continuation through the connected set $M\setminus\overline U$, gives
\begin{equation}
  w=0\qquad\text{on }M\setminus\overline U.
  \label{eq:partial-runge-exterior-zero}
\end{equation}
Elliptic regularity for \eqref{eq:partial-runge-dual} gives $w\in H^2(M)$.
Consequently its Dirichlet and normal traces on each component of $\partial U$ agree from the two sides, and both vanish by \eqref{eq:partial-runge-exterior-zero}.
For every function $v$ harmonic near $\overline U$, Green's identity on $U$ now gives
\[
  \int_U\phi v\dd V_{g_j}
  =\int_U(\Delta_{g_j}w)v\dd V_{g_j}=0.
\]
Thus every continuous linear functional that annihilates the restrictions of $\mathcal H_j^\Gamma$ also annihilates $v$.
The Hahn--Banach theorem proves the $L^2(U)$ approximation.

Finally, $u_j^{f_\ell}-v$ is harmonic on $U$.
Interior elliptic estimates on $U'\Subset U$ bound its $C^r(U')$ norm by its $L^2(U)$ norm, after inserting one intermediate compact subset between $U'$ and $U$.
This proves \eqref{eq:partial-runge-approximation}; see also~\cite[Appendix~A]{LassasLiimatainenSalo2020}.
\end{proof}

\begin{lemma}
\label{lem:partial-runge-jets}
For each $j=1,2$, the family $\mathcal H_j^\Gamma$ has the following properties.
\begin{description}
  \item[(R1)] If $x_1\ne x_2$ are in $M^\circ$, there is $u\in\mathcal H_j^\Gamma$ with $u(x_1)\ne u(x_2)$.
  \item[(R2)] If $x\in M^\circ$ and $\xi\in T_x^*M$, there is $u\in\mathcal H_j^\Gamma$ with $\dd u(x)=\xi$.
  \item[(R3)] Let $(z^1,\ldots,z^n)$ be $g_j$-harmonic coordinates near $x\in M^\circ$.  If $H=(H_{ab})$ is symmetric and
  \begin{equation}
    (g_j)^{ab}(x)H_{ab}=0,
    \label{eq:partial-compatible-hessian}
  \end{equation}
  then there is $u\in\mathcal H_j^\Gamma$ such that
  \begin{equation}
    u(x)=0,
    \qquad \dd u(x)=0,
    \qquad \partial_a\partial_bu(x)=H_{ab}.
    \label{eq:partial-realized-second-jet}
  \end{equation}
\end{description}
\end{lemma}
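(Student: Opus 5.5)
The plan is to repeat the proof of Lemma~\ref{lem:runge-jets} almost verbatim, with Lemma~\ref{lem:partial-runge-approximation} replacing Lemma~\ref{lem:runge-approximation}. All three properties only concern interior points, so the only differences from the full-data case are that boundary data are restricted to $C_c^\infty(\Gamma)$ and that the boundary-point cases of (R1) no longer arise. Since every element of $\mathcal H_j^\Gamma$ is an exact global harmonic function, the only new input is the density statement, which has already been proved for finite unions of small interior balls with connected complement.

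For (R1), I take $x_1\ne x_2$ in $M^\circ$ and choose disjoint coordinate balls $B_1\ni x_1$, $B_2\ni x_2$ so small that $M\setminus(\overline B_1\cup\overline B_2)$ is connected. This holds because removing two small closed balls from a connected manifold of dimension $n\ge2$ keeps it connected. The function $v$ equal to $0$ near $\overline B_1$ and $1$ near $\overline B_2$ is harmonic near $\overline U$, where $U=B_1\cup B_2$. Lemma~\ref{lem:partial-runge-approximation} with $r=0$ then gives $f\in C_c^\infty(\Gamma)$ with $|u_j^f-v|<1/3$ at both points, so $u_j^f(x_1)\ne u_j^f(x_2)$. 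For (R2), local harmonic coordinates near $x$ have differentials spanning $T_x^*M$. Applying the lemma with $r=1$ on a single small ball shows that the linear subspace $\{\dd u(x):u\in\mathcal H_j^\Gamma\}$ is dense in $T_x^*M$. A subspace of a finite-dimensional space is closed, so this subspace is all of $T_x^*M$.

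For (R3), I reuse the local construction from the proof of Lemma~\ref{lem:runge-jets}. The quadratic polynomial $p$ built from $H$ is harmonic for the frozen-coefficient operator, by \eqref{eq:partial-compatible-hessian}. Solving the true Dirichlet problem with boundary value $p$ on shrinking balls and rescaling gives harmonic germs whose $2$-jets converge to $(0,0,H)$. The set of compatible $2$-jets of local harmonic germs is a subspace of a finite-dimensional jet space, hence closed, so $(0,0,H)$ is attained exactly by a harmonic function on one fixed small ball $B$. I then apply Lemma~\ref{lem:partial-runge-approximation} with $r=2$ on $B$ to show that the $2$-jets at $x$ of elements of $\mathcal H_j^\Gamma$ are dense in this closed subspace of jets with vanishing trace. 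They therefore exhaust it, and in particular include $(0,0,H)$.

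The main point needing care is that last closed-range step. I must make sure that exact realization is of the $2$-jet $(0,0,H)$ itself, and not only approximate. The argument is that the map $u\mapsto J_x^2u$ on $\mathcal H_j^\Gamma$ is linear with finite-dimensional range, and this range is dense in the subspace $\{(c,\xi,K):(g_j)^{ab}K_{ab}=0\}$. Every value, gradient and compatible Hessian is first realized by a local harmonic function, which in turn is approximated in $C^2$ by elements of $\mathcal H_j^\Gamma$. A dense linear subspace of a finite-dimensional space is the whole space, so the range equals this subspace and contains $(0,0,H)$. The remaining inputs are routine: connectedness of $M\setminus\overline U$, which holds for small balls in dimension $n\ge3$, and the fact that partial boundary data enter only through Lemma~\ref{lem:partial-runge-approximation}.
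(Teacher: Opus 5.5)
Your proposal is correct and follows the paper's proof essentially step for step: point separation by Runge approximation of the locally constant function on two disjoint balls, exact realization of covectors by the closed-subspace argument, and realization of the compatible $2$-jet via the frozen-coefficient quadratic polynomial, the rescaled Dirichlet problem, and two finite-dimensional closed-range steps (first for local harmonic germs, then for $\mathcal H_j^\Gamma$ using Lemma~\ref{lem:partial-runge-approximation} with $r=2$). Your explicit identification of the jet range with $\{(c,\xi,K):(g_j)^{ab}K_{ab}=0\}$ only spells out the paper's final closed-range step in more detail; it is not a different argument.
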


\begin{proof}
We prove the three statements separately.

\medskip
\noindent\emph{Point separation.}
Choose disjoint small coordinate balls $B_1,B_2\Subset M^\circ$ centered at $x_1,x_2$ whose complement is connected.
The function that equals zero near $\overline{B_1}$ and one near $\overline{B_2}$ is harmonic on the disconnected neighborhood $B_1\cup B_2$.
Lemma~\ref{lem:partial-runge-approximation} with $r=0$ gives $u\in\mathcal H_j^\Gamma$ sufficiently close to this function at the two centers, and hence $u(x_1)\ne u(x_2)$.
This proves \textup{(R1)}.

\medskip
\noindent\emph{First-order jets.}
Local harmonic coordinates near $x$ provide local harmonic functions whose differentials form a basis of $T_x^*M$.
Thus a linear combination of these coordinate functions has differential $\xi$ at $x$.
Lemma~\ref{lem:partial-runge-approximation} with $r=1$ shows that $\xi$ lies in the closure of
\[
  \{\dd u(x):u\in\mathcal H_j^\Gamma\}\subset T_x^*M.
\]
This set is a linear subspace of a finite-dimensional vector space and is therefore closed.
It consequently contains $\xi$, which proves \textup{(R2)} with exact equality.

\medskip
\noindent\emph{Second-order jets.}
In harmonic coordinates one has
\[
  \Delta_{g_j}u=(g_j)^{ab}\partial_a\partial_bu.
\]
Therefore \eqref{eq:partial-compatible-hessian} is necessary for the jet in \eqref{eq:partial-realized-second-jet}.
To prove sufficiency, consider the quadratic polynomial
\[
  p(z)=\frac12H_{ab}(z^a-z^a(x))(z^b-z^b(x)).
\]
The trace condition says that $p$ is harmonic for the operator obtained by freezing the coefficients of $\Delta_{g_j}$ at $x$.
On a coordinate ball of radius $\varepsilon$ centered at $x$, solve the true harmonic Dirichlet problem with boundary value $p$.
After rescaling the ball to unit size, the coefficients converge to the frozen coefficients as $\varepsilon\to0$.
Interior elliptic estimates then show that the value, differential, and Hessian at $x$ of these local harmonic solutions converge to $0$, $0$, and $H$, respectively.

Let $\mathscr G_x$ be the vector space of germs at $x$ of local $g_j$-harmonic functions, and let $J_x^2$ send a germ to its value, differential, and coordinate Hessian at $x$.
Its range is a linear subspace of a finite-dimensional jet space and is therefore closed.
Hence the limiting jet $(0,0,H)$ is realized by a harmonic germ, and thus by a harmonic function on one fixed sufficiently small ball.
Apply Lemma~\ref{lem:partial-runge-approximation} with $r=2$ on that ball.
The range of the corresponding jet map on $\mathcal H_j^\Gamma$ is again a closed subspace of the finite-dimensional jet space; the approximating jets therefore imply that it contains $(0,0,H)$ exactly.
This proves \textup{(R3)}.
\end{proof}

\begin{lemma}
\label{lem:partial-poisson-geometry}
The map $P_j^\Gamma$ is injective and immersive.
At every interior point, finitely many evaluation functionals give $\Eclass$ coordinates on its image; consequently, whenever a point correspondence $(P_j^\Gamma)^{-1}\circ P_i^\Gamma$ is locally defined, it is of class $\Eclass$.
It is a topological embedding
\begin{equation}
  P_j^\Gamma:M^\circ\longrightarrow H^{-n-2}(\Gamma).
  \label{eq:partial-poisson-sobolev}
\end{equation}
\end{lemma}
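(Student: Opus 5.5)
The plan mirrors the proof of Lemma~\ref{lem:poisson-geometry}, with Lemma~\ref{lem:partial-runge-jets} replacing Lemma~\ref{lem:runge-jets}.

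\emph{Injectivity and immersion.} Property (R1) of Lemma~\ref{lem:partial-runge-jets} separates any two interior points, so $P_j^\Gamma$ is injective on $M^\circ$. Property (R2) shows that if $X\in T_xM$ and $\dd u(x)X=0$ for every $u\in\mathcal H_j^\Gamma$, then $X=0$. Hence $P_j^\Gamma$ is immersive, in the sense that its differential $X\mapsto(f\mapsto\dd u_j^f(x)X)$ is injective.

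\emph{Local $\Eclass$ coordinates.} By (R2) we may choose $f_1,\ldots,f_n\in C_c^\infty(\Gamma)$ whose differentials $\dd u_j^{f_k}(x)$ form a basis of $T_x^*M$. Then $U_j=(u_j^{f_1},\ldots,u_j^{f_n})$ is a coordinate map near $x$. It is of class $\Eclass$ by \eqref{eq:23}, and its inverse is $\Eclass$ by the inverse function theorem in Lemma~\ref{lem:dc-stability}. As in \eqref{eq:32}, on the image we have $(P_j^\Gamma)^{-1}=U_j^{-1}\circ\operatorname{ev}_{f_1,\ldots,f_n}$. Consequently, wherever $(P_j^\Gamma)^{-1}\circ P_i^\Gamma$ is defined near an interior point, it equals $U_j^{-1}\circ U_i$ in a neighborhood and is therefore $\Eclass$. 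The regularity of $f$ never enters, because only interior regularity \eqref{eq:23} is used.

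\emph{Continuity into $H^{-n-2}(\Gamma)$.} For $f\in\widetilde H^{n+2}(\Gamma)\subset H^{n+2}(\partial M)$, elliptic regularity puts $u_j^f$ in $H^{n+5/2}(M)$, and Sobolev embedding gives $\|u_j^f\|_{C^1(M)}\le C\|f\|_{H^{n+2}}$. Thus
\[
|P_j^\Gamma(x)(f)-P_j^\Gamma(y)(f)|\le C\,d_{g_j}(x,y)\|f\|,
\]
and $P_j^\Gamma$ extends by density to a Lipschitz map $M^\circ\to H^{-n-2}(\Gamma)$ with respect to $d_{g_j}$.

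\emph{Homeomorphism onto the image.} This is the main obstacle: $M^\circ$ is not compact, so the compactness argument used for Lemma~\ref{lem:poisson-geometry} does not apply directly. The plan is to extend $P_j^\Gamma$ to all of $M$ by the same formula. The extension is still Lipschitz. At a boundary point $y$, $P_j^\Gamma(y)(f)=f(y)$, which is the zero functional if $y\notin\Gamma$ and a point evaluation if $y\in\Gamma$.

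Suppose $x_k,x\in M^\circ$ and $P_j^\Gamma(x_k)\to P_j^\Gamma(x)$. By compactness of $M$, any subsequence has a further subsequence with $x_k\to z\in M$, and then $P_j^\Gamma(z)=P_j^\Gamma(x)$. We must show $z=x$.

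If $z\in M^\circ$, injectivity from (R1) gives $z=x$. If $z\in\partial M$, pick $f\ge0$, $f\not\equiv0$, in $C_c^\infty(\Gamma)$ with $f(z)=0$; this is possible whether or not $z\in\Gamma$, by taking the support of $f$ away from $z$. The strong maximum principle gives $u_j^f(x)>0=P_j^\Gamma(z)(f)$, a contradiction.

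So every subsequence of $(x_k)$ has a further subsequence converging to $x$, hence $x_k\to x$. Therefore the inverse is continuous on the image, and $P_j^\Gamma$ is a topological embedding, as claimed in \eqref{eq:partial-poisson-sobolev}.
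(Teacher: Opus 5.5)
Your proposal is correct and follows the paper's proof essentially step by step. You use (R1)/(R2) for injectivity and immersion, and the evaluation-functional formula with \eqref{eq:23} and the inverse function theorem for the local $\Eclass$ coordinates. Continuity into $H^{-n-2}(\Gamma)$ comes from the $H^{n+2}\to H^{n+5/2}\to C^1$ Lipschitz bound. For continuity of the inverse, you use compactness of the ambient $M$ together with the strong maximum principle to exclude boundary limit points, exactly as the paper does. Your explicit extension of $P_j^\Gamma$ to all of $M$ by the same formula is just a cleaner packaging of the paper's appeal to boundary continuity of harmonic extensions.
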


\begin{proof}
Property \textup{(R1)} of Lemma~\ref{lem:partial-runge-jets} gives injectivity, and \textup{(R2)} gives immersion.
Fix $x\in M^\circ$.
By \textup{(R2)}, choose $f_1,\ldots,f_n\in C_c^\infty(\Gamma)$ such that
\[
  U_j=(u_j^{f_1},\ldots,u_j^{f_n})
\]
is a coordinate map near $x$.
If evaluation at these boundary values is denoted by
\[
  \operatorname{ev}_{f_1,\ldots,f_n}(T)
  =(T(f_1),\ldots,T(f_n)),
\]
then $\operatorname{ev}_{f_1,\ldots,f_n}\circ P_j^\Gamma=U_j$.
Consequently, on the image of this neighborhood,
\begin{equation}
  (P_j^\Gamma)^{-1}
  =U_j^{-1}\circ\operatorname{ev}_{f_1,\ldots,f_n}.
  \label{eq:partial-poisson-local-inverse}
\end{equation}
Lemma~\ref{lem:dc-stability} and the Denjoy--Carleman inverse function theorem show that the finite-dimensional coordinate representation \eqref{eq:partial-poisson-local-inverse} is of class $\Eclass$.

We next justify the topology in \eqref{eq:partial-poisson-sobolev}.
The zero-extension inclusion sends $\widetilde H^{n+2}(\Gamma)$ continuously into $H^{n+2}(\partial M)$.
Elliptic boundary regularity sends the latter data continuously to $H^{n+5/2}(M)$ harmonic functions.
Sobolev embedding gives a uniform $C^1(M)$ bound, so for $x,y\in M$,
\[
  |u_j^f(x)-u_j^f(y)|
  \le C d_{g_j}(x,y)\|f\|_{\widetilde H^{n+2}(\Gamma)}.
\]
Taking the supremum over unit boundary data proves continuity of $P_j^\Gamma$ as a map into $H^{-n-2}(\Gamma)$.

It remains to prove continuity of the inverse because $M^\circ$ is not compact.
Suppose that
\[
  P_j^\Gamma(x_k)\longrightarrow P_j^\Gamma(x)
  \quad\text{in }H^{-n-2}(\Gamma).
\]
By compactness of $M$, every subsequence of $(x_k)$ has a further subsequence converging to some $y\in M$.
If $y\in\partial M$, convergence of the Poisson distributions and boundary continuity of harmonic extensions give
\[
  u_j^f(x)=f(y)
  \qquad(f\in C_c^\infty(\Gamma)),
\]
where $f(y)=0$ when $y\notin\Gamma$ by the zero-extension convention.
Choose $f\ge0$, $f\not\equiv0$, with $f(y)=0$.
The strong maximum principle gives $u_j^f(x)>0$, a contradiction.
Hence $y\in M^\circ$, and continuity and injectivity of $P_j^\Gamma$ give $y=x$.
Every convergent subsequence therefore has limit $x$; compactness then implies $x_k\to x$.
Thus the inverse is continuous on the image, completing the proof.
\end{proof}

The collar identity can now be written intrinsically as
\[
  P_1^\Gamma=P_2^\Gamma\circ F_{0,q}
\]
near each $q\in\Gamma$.
The following lemma globalizes this identity through the connected interior.

\begin{lemma}\label{lem:partial-poisson-globalization}
There is an $\Eclass$ diffeomorphism $F:M^\circ\to M^\circ$ such that
\begin{equation}
  P_1^\Gamma=P_2^\Gamma\circ F,
  \qquad
  u_1^f=u_2^f\circ F\quad\text{on }M^\circ,
  \qquad f\in C_c^\infty(\Gamma).
  \label{eq:partial-global-harmonic}
\end{equation}
\end{lemma}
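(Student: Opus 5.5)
We would follow the maximal-domain scheme of Lemma~\ref{lem:poisson-globalization}, now for the partial Poisson maps $P_j^\Gamma$ of \eqref{eq:partial-poisson} and the local seeds $F_{0,q}$ of Lemma~\ref{lem:partial-common-collar}. Fix one point $q_0\in\Gamma$ and a small connected open set $O_0=C_{1,q_0}\cap M^\circ$ on which \eqref{eq:partial-collar-harmonic} holds, so that $P_1^\Gamma=P_2^\Gamma\circ F_{0,q_0}$ on $O_0$. Let $B\subset M^\circ$ be the union of all connected open sets $O\supset O_0$ with $P_1^\Gamma(O)\subset P_2^\Gamma(M^\circ)$. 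On $B$ set $F=(P_2^\Gamma)^{-1}\circ P_1^\Gamma$, which is well defined by injectivity in Lemma~\ref{lem:partial-poisson-geometry}. On $O_0$ it coincides with $F_{0,q_0}$, and it satisfies $u_1^f=u_2^f\circ F$ on $B$ for all $f\in C_c^\infty(\Gamma)$. By the local formula \eqref{eq:partial-poisson-local-inverse} at $x$ and at $F(x)$, $F$ is $\Eclass$. Differentiating the harmonic identity and using (R2) of Lemma~\ref{lem:partial-runge-jets} shows that $DF$ is injective, so $F$ is a local diffeomorphism. The seed's agreement on all of $O_0$ is the only place where the collar enters.

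The main step is to show that $B$ is closed in $M^\circ$. Take $p_k\in B$ with $p_k\to x_1\in M^\circ\setminus B$. By compactness of $M$, pass to a subsequence with $F(p_k)\to x_2\in M$. The key new point relative to the full-data case is excluding $x_2\in\partial M$, including $x_2\notin\overline\Gamma$. Passing to the limit in $u_1^f(p_k)=u_2^f(F(p_k))$ and using continuity of harmonic extensions up to $\partial M$ gives $u_1^f(x_1)=f(x_2)$ for all $f\in C_c^\infty(\Gamma)$, with $f(x_2)=0$ off $\Gamma$ by the zero-extension convention. Choosing $f\ge0$, $f\not\equiv0$, supported in $\Gamma$ and vanishing at $x_2$ contradicts the strong maximum principle. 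Hence $x_2\in M^\circ$.

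We then choose $f_1,\dots,f_n\in C_c^\infty(\Gamma)$ so that $U_2=(u_2^{f_i})$ is a coordinate map near $x_2$ (by (R2)), and set $U_1=(u_1^{f_i})$. Then $U_1(x_1)=U_2(x_2)$. The limiting argument \eqref{eq:40} shows that $D(U_2^{-1}\circ U_1)(x_1)$ is invertible. On a connected neighbourhood $\Omega_1$ of $x_1$, the functions $u_1^f\circ U_1^{-1}$ and $u_2^f\circ U_2^{-1}$ are $\Eclass$ by \eqref{eq:23}, and they agree on the nonempty open set $U_1(B\cap\Omega_1)$. The identity principle \eqref{eq:24} then forces them to agree on $U_1(\Omega_1)$. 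This yields $P_1^\Gamma=P_2^\Gamma\circ U_2^{-1}\circ U_1$ on $\Omega_1$. Consequently $B\cup\Omega_1$ is an admissible set, which contradicts maximality. Since $M^\circ$ is connected and $B$ is nonempty, open and closed, we get $B=M^\circ$.

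For bijectivity, we repeat the argument with the roles of the metrics exchanged, starting from the inverse seed $F_{0,q_0}^{-1}$. This produces $G$ with $P_2^\Gamma=P_1^\Gamma\circ G$ on $M^\circ$. By injectivity of the Poisson maps, $F\circ G=\Id$ and $G\circ F=\Id$, so $F$ is an $\Eclass$ diffeomorphism of $M^\circ$ satisfying \eqref{eq:partial-global-harmonic}. Seeds at different points of $\Gamma$ are automatically consistent with $F$, since each satisfies the same Poisson identity and $P_2^\Gamma$ is injective. We expect the boundary-escape exclusion to be the main obstacle. Since $M^\circ$ is not compact, it is needed both for the closedness step and for the homeomorphism property in Lemma~\ref{lem:partial-poisson-geometry}, and it must use only data supported in $\Gamma$.
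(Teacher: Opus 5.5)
Your proposal is correct and follows the paper's proof essentially step by step. It uses the same maximal connected domain $B$ grown from a single collar seed, the same strong-maximum-principle exclusion of $F(p_k)\to\partial M$ using only data supported in $\Gamma$, the same paired harmonic coordinates with the limiting invertibility argument and the $\Eclass$ identity principle, and the same bijectivity argument obtained by interchanging the metrics.

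One detail needs tightening. The two coordinate representations are not guaranteed to agree on all of $U_1(B\cap\Omega_1)$, since parts of $B\cap\Omega_1$ may be mapped by $F$ outside $\Omega_2$. They do agree on $U_1(V_k)$ for a small neighbourhood $V_k\subset B\cap\Omega_1$ of $p_k$ with $F(V_k)\subset\Omega_2$; such a $V_k$ exists by continuity of $F$ and $F(p_k)\to x_2$. This is how the paper handles it, and this nonempty open set suffices for the identity principle.
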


\begin{proof}
\noindent\emph{Step 1: define the maximal identification.}
Fix $q_0\in\Gamma$ and choose a connected interior portion of the collar in Lemma~\ref{lem:partial-common-collar}.
Let $B$ be the largest connected open subset of $M^\circ$ containing this seed and satisfying
\[
  P_1^\Gamma(B)\subset P_2^\Gamma(M^\circ).
\]
Such a largest set exists: the union of all admissible connected open sets containing the seed is again connected, open, and has the same image inclusion.
On $B$ define
\[
  F=(P_2^\Gamma)^{-1}\circ P_1^\Gamma.
\]
Lemma~\ref{lem:partial-poisson-geometry} shows that $F$ is an $\Eclass$ local diffeomorphism and gives \eqref{eq:partial-global-harmonic} on $B$.

\medskip
\noindent\emph{Step 2: extend across the boundary of the maximal domain.}
Let $p_k\in B$ and $p_k\to x_1\in M^\circ$.
After passing to a subsequence in the compact ambient manifold $M$, write $F(p_k)\to x_2\in M$.
The point $x_2$ cannot lie on $\partial M$: otherwise \eqref{eq:partial-global-harmonic} and boundary continuity would give
\[
  u_1^f(x_1)=f(x_2)\qquad(f\in C_c^\infty(\Gamma)),
\]
which again contradicts the strong maximum principle after choosing a nonzero nonnegative $f$ that vanishes at $x_2$.
Thus $x_2\in M^\circ$.

Use property \textup{(R2)} of Lemma~\ref{lem:partial-runge-jets} to choose $f_1,\ldots,f_n$ such that
\[
  U_2=(u_2^{f_1},\ldots,u_2^{f_n})
\]
is a harmonic coordinate map near $x_2$, and put $U_1=(u_1^{f_1},\ldots,u_1^{f_n})$.
By \eqref{eq:partial-global-harmonic},
\[
  U_1(p_k)=U_2(F(p_k)).
\]
Passing to the limit gives $U_1(x_1)=U_2(x_2)$.
Choose a coordinate neighborhood $\Omega_2$ of $x_2$ for $U_2$ and then a neighborhood $\Omega_1$ of $x_1$ such that $U_1(\Omega_1)\subset U_2(\Omega_2)$.
Thus
\[
  \widetilde F=U_2^{-1}\circ U_1
\]
is well defined on $\Omega_1$.
We verify that $D\widetilde F(x_1)$ is invertible.
Suppose that $D\widetilde F(x_1)X=0$ and choose tangent vectors $X_k\in T_{p_k}M$ converging to $X$ in the coordinate chart.
For large $k$, the equality $U_1=U_2\circ F$ and the definition of $\widetilde F$ give $F=\widetilde F$ near $p_k$.
Differentiating \eqref{eq:partial-global-harmonic} at $p_k$ yields
\[
  \dd u_1^f(p_k)X_k
  =\dd u_2^f(F(p_k))D\widetilde F(p_k)X_k
  \qquad(f\in C_c^\infty(\Gamma)).
\]
Letting $k\to\infty$ gives $\dd u_1^f(x_1)X=0$ for every supported boundary value $f$.
Property \textup{(R2)} of Lemma~\ref{lem:partial-runge-jets} realizes every covector at $x_1$, so $X=0$.
Thus $D\widetilde F(x_1)$ is injective and hence invertible.
Shrink $\Omega_1$ so that $U_1$ is a coordinate map and $U_1(\Omega_1)$ is connected.
For sufficiently large $k$, one has $p_k\in\Omega_1$ and $F(p_k)\in\Omega_2$.
The continuity of $F$ on $B$ gives a nonempty open neighborhood $V_k\subset B\cap\Omega_1$ of $p_k$ such that $F(V_k)\subset\Omega_2$.
On $V_k$, one has $U_1=U_2\circ F$ and hence $F=\widetilde F$.
For each $f\in C_c^\infty(\Gamma)$, the two coordinate representations
\[
  u_1^f\circ U_1^{-1},
  \qquad \left.(u_2^f\circ U_2^{-1})\right|_{U_1(\Omega_1)}
\]
belong to $\Eclass$ and agree on the nonempty open set $U_1(V_k)$ by \eqref{eq:partial-global-harmonic}.
The quasianalytic identity principle makes them equal on the whole connected coordinate range $U_1(\Omega_1)$.
Thus $P_1^\Gamma=P_2^\Gamma\circ\widetilde F$ near $x_1$, contradicting maximality unless $x_1\in B$.
Hence $B$ is closed as well as open in the connected manifold $M^\circ$, so $B=M^\circ$.

\medskip
\noindent\emph{Step 3: prove global bijectivity.}
Interchanging $g_1$ and $g_2$ gives the reverse inclusion of the two partial Poisson images.
Injectivity of the Poisson maps shows that the two identifications are inverse to one another.
Thus $F:M^\circ\to M^\circ$ is a global $\Eclass$ diffeomorphism satisfying \eqref{eq:partial-global-harmonic}.
\end{proof}

As in the full-data proof, the point correspondence and the tensor recovery are logically separate.
The supported second-order jets now determine the metric in paired harmonic coordinates.

\begin{lemma}\label{lem:partial-metric-from-harmonic}
Let $F$ be the diffeomorphism in Lemma~\ref{lem:partial-poisson-globalization}.
Then
\[
  F^*g_2=g_1\qquad\text{on }M^\circ.
\]
\end{lemma}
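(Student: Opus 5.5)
The argument will follow the proof of Lemma~\ref{lem:metric-from-harmonic}, with the supported family $\mathcal H_j^\Gamma$ in place of $\mathcal H_j$. It has three steps: show that the metrics are conformal in paired harmonic coordinates, show that the conformal factor is constant, and fix the constant using the collar near $\Gamma$.

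\emph{Step 1: conformality.} Fix $x\in M^\circ$. Using property (R2) of Lemma~\ref{lem:partial-runge-jets}, choose $f_1,\ldots,f_n\in C_c^\infty(\Gamma)$ so that $U_2=(u_2^{f_1},\ldots,u_2^{f_n})$ is a $g_2$-harmonic coordinate system near $F(x)$. By \eqref{eq:partial-global-harmonic}, $U_1=U_2\circ F$ has components $u_1^{f_k}$, so it is a $g_1$-harmonic coordinate system near $x$. In these paired coordinates each pair $u_1^f,u_2^f$ with $f\in C_c^\infty(\Gamma)$ has the same coordinate representation. In harmonic coordinates the Laplacian reduces to $(g_j)^{ab}\partial_a\partial_b$.

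Take a symmetric $H$ with $(g_2)^{ab}H_{ab}=0$ at $F(x)$. Property (R3) of Lemma~\ref{lem:partial-runge-jets} gives $u_2^f\in\mathcal H_2^\Gamma$ with $2$-jet $(0,0,H)$. The paired function $u_1^f$ is $g_1$-harmonic and has the same coordinate Hessian, so $(g_1)^{ab}H_{ab}=0$ at $x$. The two trace functionals therefore have the same kernel. Hence they are positively proportional, and $g_1=\lambda\,F^*g_2$ for a positive function $\lambda$ on $M^\circ$. Smoothness of $\lambda$ follows from smoothness of $F$ and of the metrics.

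\emph{Step 2: $\lambda$ is constant.} Both coordinate systems are harmonic, so the contracted Christoffel symbols of $g_1$ and $F^*g_2$ vanish in them. Formula \eqref{eq:46} then reads $0=\lambda^{-1}\bigl(0-\tfrac{n-2}{2}\partial^a_{g_2}\log\lambda\bigr)$. Since $n\ge3$, this forces $\dd\lambda=0$ near $x$. Because $x$ was arbitrary and $M^\circ$ is connected, $\lambda$ is a positive constant.

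\emph{Step 3: $\lambda=1$ (the main obstacle).} Identifying the constant is the one point needing care: it requires $F$ to coincide with the local collar isometry somewhere, whereas $F$ was produced abstractly by maximal-domain globalization. Fix $q\in\Gamma$ and use the map $F_{0,q}$ of Lemma~\ref{lem:partial-common-collar}. On $C_{1,q}\cap M^\circ$, both $P_1^\Gamma=P_2^\Gamma\circ F$ and $P_1^\Gamma=P_2^\Gamma\circ F_{0,q}$ hold, by \eqref{eq:partial-global-harmonic} and \eqref{eq:partial-collar-harmonic}. Injectivity of $P_2^\Gamma$ (Lemma~\ref{lem:partial-poisson-geometry}) then gives $F=F_{0,q}$ on that set. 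Since $F_{0,q}^*g_2=g_1$ there, we get $\lambda=1$ on a nonempty open set, hence everywhere. This yields $F^*g_2=g_1$ on $M^\circ$.
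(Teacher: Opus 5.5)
Your proposal is correct and follows the paper's proof essentially step for step: paired $\Gamma$-supported harmonic coordinates together with (R3) give conformality, the contracted-Christoffel identity with $n\ge3$ makes $\lambda$ constant, and injectivity of $P_2^\Gamma$ identifies $F$ with a collar isometry, which forces $\lambda=1$. The only cosmetic difference is that you use an arbitrary $q\in\Gamma$ instead of the seed point $q_0$; this is harmless because \eqref{eq:partial-global-harmonic} holds on all of $M^\circ$.
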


\begin{proof}
Fix $x\in M^\circ$.
Property \textup{(R2)} of Lemma~\ref{lem:partial-runge-jets} supplies $f_1,\ldots,f_n\in C_c^\infty(\Gamma)$ such that
\[
  U_2=(u_2^{f_1},\ldots,u_2^{f_n})
\]
is a $g_2$-harmonic coordinate map near $F(x)$.
Set $U_1=(u_1^{f_1},\ldots,u_1^{f_n})$.
Equation \eqref{eq:partial-global-harmonic} gives $U_1=U_2\circ F$, so $U_1$ is a $g_1$-harmonic coordinate map near $x$.
Moreover, for every $f\in C_c^\infty(\Gamma)$, the two functions $u_1^f$ and $u_2^f$ have the same expression in these paired coordinates.

We first recover the conformal class.
In harmonic coordinates the contracted Christoffel symbols vanish, and hence
\begin{equation}
  \Delta_{g_j}w=(g_j)^{ab}\partial_a\partial_bw.
  \label{eq:partial-harmonic-laplacian}
\end{equation}
Let $H=(H_{ab})$ be any symmetric matrix satisfying
\[
  (g_2)^{ab}(F(x))H_{ab}=0.
\]
Property \textup{(R3)} of Lemma~\ref{lem:partial-runge-jets} gives $u_2^f\in\mathcal H_2^\Gamma$ whose value and differential vanish at $F(x)$ and whose coordinate Hessian is $H$.
The paired function $u_1^f$ has the same coordinate representation, so its Hessian at $x$ is also $H$.
Since it is $g_1$-harmonic, \eqref{eq:partial-harmonic-laplacian} gives
\begin{equation}
  (g_1)^{ab}(x)H_{ab}=0
  \quad\text{whenever}\quad
  (g_2)^{ab}(F(x))H_{ab}=0.
  \label{eq:partial-trace-kernels}
\end{equation}

The two contractions in \eqref{eq:partial-trace-kernels} are nonzero linear functionals on the finite-dimensional space of symmetric matrices.
Their kernels are hyperplanes, and the inclusion in \eqref{eq:partial-trace-kernels} therefore makes the kernels equal.
Two nonzero linear functionals with the same kernel are proportional.
Because both inverse metric matrices are positive definite, the proportionality factor is positive.
After inverting the matrices, we obtain
\begin{equation}
  g_1=\lambda F^*g_2
  \label{eq:partial-conformal-metrics}
\end{equation}
near $x$, for a positive smooth function $\lambda$.
Since $x$ was arbitrary, \eqref{eq:partial-conformal-metrics} holds on $M^\circ$.

It remains to determine $\lambda$.
The paired coordinates are harmonic for both $g_1$ and $F^*g_2$, so the contracted Christoffel symbols of both metrics vanish in these coordinates.
If $\widetilde g=\lambda F^*g_2$, direct substitution in the Christoffel formula gives
\[
  \Gamma^a(\widetilde g)
  =\lambda^{-1}\left(
    \Gamma^a(F^*g_2)
    -\frac{n-2}{2}\partial_{F^*g_2}^a\log\lambda
  \right).
\]
Both contracted Christoffel symbols on the right and left are zero.
Since $n\ge3$, this identity forces $\dd\lambda=0$.
For $n=2$ the coefficient $n-2$ vanishes, so this harmonic-coordinate argument does not determine $\lambda$.
The interior of $M$ is connected, so $\lambda$ is constant.

On the seed collar, the identity
$P_1^\Gamma=P_2^\Gamma\circ F$
and the collar identity
$P_1^\Gamma=P_2^\Gamma\circ F_{0,q_0}$,
together with injectivity of $P_2^\Gamma$, give $F=F_{0,q_0}$.
Equation \eqref{eq:partial-common-collar} then gives $g_1=F^*g_2$ on a nonempty open set, so the constant $\lambda$ equals one.
Thus $F^*g_2=g_1$ throughout $M^\circ$.
\end{proof}

It remains to identify the accessible boundary and to record precisely what the interior isometry determines on the inaccessible part.

\begin{lemma}\label{lem:partial-boundary-completion}
The isometry in Lemma~\ref{lem:partial-metric-from-harmonic} has a canonical continuous extension to a metric-space isometry
\[
  \overline F:(M,d_{g_1})\longrightarrow(M,d_{g_2}).
\]
It fixes $\Gamma$ pointwise and agrees with $F_{0,q}$ on an interior neighborhood of every $q\in\Gamma$.
Consequently, $\overline F|_{M^\Gamma}$ and its inverse are of class $\Eclass$ up to $\Gamma$.
\end{lemma}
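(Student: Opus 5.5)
The plan is to pass from the smooth interior isometry $F:(M^\circ,g_1)\to(M^\circ,g_2)$ of Lemma~\ref{lem:partial-metric-from-harmonic} to a distance-preserving map, and then extend it by uniform continuity.

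\emph{Step 1: distances.} A diffeomorphism with $F^*g_2=g_1$ maps $g_1$-lengths of piecewise $C^1$ curves in $M^\circ$ to $g_2$-lengths. Hence $F$ is an isometry of the intrinsic length spaces $(M^\circ,d_1^\circ)\to(M^\circ,d_2^\circ)$, and so is $F^{-1}$.

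\emph{Step 2: completions.} I will show that the completion of $(M^\circ,d_j^\circ)$ is canonically $(M,d_{g_j})$. First, $d_j^\circ=d_{g_j}$ on $M^\circ$: any curve in $M$ joining interior points can be pushed slightly into the interior along a collar (inward normal flow for time $\varepsilon$) with length change $O(\varepsilon)$. Second, $M^\circ$ is dense in the compact space $(M,d_{g_j})$. Together these identify the completion with $M$. An isometry between dense subsets of complete spaces extends uniquely to an isometry $\overline F$ of the completions, and its inverse is the extension of $F^{-1}$. This gives the canonical continuous extension $\overline F:(M,d_{g_1})\to(M,d_{g_2})$.

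\emph{Step 3: behaviour near $\Gamma$.} Fix $q\in\Gamma$. On the seed collar of $q_0$ the proof of Lemma~\ref{lem:partial-metric-from-harmonic} already gave $F=F_{0,q_0}$, using injectivity of $P_2^\Gamma$. The same argument works at every $q\in\Gamma$. Both $F$ and $F_{0,q}$ satisfy $P_1^\Gamma=P_2^\Gamma\circ(\cdot)$ on $C_{1,q}\cap M^\circ$: for $F$ by \eqref{eq:partial-global-harmonic}, and for $F_{0,q}$ by \eqref{eq:partial-collar-harmonic}. Here one must check that $F_{0,q}$ maps interior points to interior points, which holds because it is a diffeomorphism of neighborhoods fixing the boundary portion. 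Injectivity of $P_2^\Gamma$ (Lemma~\ref{lem:partial-poisson-geometry}) then forces $F=F_{0,q}$ on $C_{1,q}\cap M^\circ$. Since $F_{0,q}$ is continuous up to $\Gamma$ and is the identity on $\Gamma\cap C_{1,q}$, uniqueness of the continuous extension gives $\overline F=F_{0,q}$ near $q$, and in particular $\overline F(q)=q$. Hence $\overline F|_{M^\Gamma}$ coincides locally near $\Gamma$ with the $\Eclass$ diffeomorphisms $F_{0,q}$, and in the interior with the $\Eclass$ map $F$, so it is $\Eclass$ up to $\Gamma$. The inverse is handled by the symmetric argument with $F_{0,q}^{-1}$.

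The main obstacle is Step 2, namely verifying $d_j^\circ=d_{g_j}$ on $M^\circ$ together with completeness and compactness. The delicate point is the pushing-in estimate near the boundary, which uses a smooth collar and uniform comparison of metrics there. Compactness of $M$ supplies the finite covering needed to make that comparison uniform.
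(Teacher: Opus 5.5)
Your proof is correct and uses the same ingredients as the paper. These are: identifying the completion of $(M^\circ,d_j^\circ)$ with $(M,d_{g_j})$ by pushing curves off the boundary, uniquely extending the distance-preserving map $F$ (with the inverse coming from $F^{-1}$), and using injectivity of $P_2^\Gamma$ to get $F=F_{0,q}$ on $C_{1,q}\cap M^\circ$.

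The one real difference is how you obtain $\overline F|_\Gamma=\Id$.
\begin{itemize}
\item The paper proves it first and independently of the collar maps. It observes that $\overline F$ maps $\partial M$ onto $\partial M$. It then passes to the limit in $u_1^f=u_2^f\circ F$ along $x_k\to y\in\Gamma$, which gives $\widetilde f(y)=\widetilde f(\overline F(y))$ for all $f\in C_c^\infty(\Gamma)$.
\item You deduce it from the local agreement. By density of $C_{1,q}\cap M^\circ$ in $C_{1,q}$ and continuity, $\overline F=F_{0,q}$ on all of $C_{1,q}$. Combined with $F_{0,q}|_{\Gamma\cap C_{1,q}}=\Id$, this gives $\overline F|_\Gamma=\Id$.
\end{itemize}

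Your ordering is more economical. A single step yields both the local agreement with $F_{0,q}$ and the fixing of $\Gamma$. The identity $\overline F(M^\Gamma)=M^\Gamma$ then follows directly from $F(M^\circ)=M^\circ$, without the boundary-to-boundary observation or boundary continuity of the harmonic extensions.

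The paper's argument shows something your route does not. The global partial Poisson identity alone already determines $\overline F$ on $\Gamma$, with no appeal to the local collar isometries of Lemma~\ref{lem:partial-common-collar}.
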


\begin{proof}
Let $d_j^\circ$ denote the intrinsic length distance of $(M^\circ,g_j)$.
A smooth collar permits every piecewise $C^1$ curve in $M$ whose endpoints lie in $M^\circ$ to be pushed into $M^\circ$, with its length changing by $o(1)$.
Consequently,
\[
  d_j^\circ(x,y)=d_{g_j}(x,y),
  \qquad x,y\in M^\circ,
\]
where the right-hand side is the Riemannian length distance on the compact manifold with boundary $M$.
Since $F^*g_2=g_1$, the map $F$ preserves the intrinsic distances $d_j^\circ$.
The metric completion of $(M^\circ,d_j^\circ)$ is therefore canonically $(M,d_{g_j})$.

For $y\in M$, choose $x_k\in M^\circ$ with $x_k\to y$ and define
\[
  \overline F(y)=\lim_{k\to\infty}F(x_k).
\]
Since $F$ preserves $d_j^\circ$, $(F(x_k))$ is Cauchy and the limit exists; the same distance identity shows that it is independent of the approximating sequence.
Applying the construction to $F^{-1}$ gives the inverse map, so $\overline F$ is the unique isometry between the two completions extending $F$.
Both $F$ and $F^{-1}$ preserve the interior, and hence the extension maps $M\setminus M^\circ=\partial M$ onto itself.

If $y\in\Gamma$ and $x_k\in M^\circ$ tends to $y$, let $\widetilde f$ denote the zero extension of $f\in C_c^\infty(\Gamma)$ to $\partial M$.
Then
\[
  \widetilde f(y)
  =\lim_k u_1^f(x_k)
  =\lim_k u_2^f(F(x_k))
  =\widetilde f(\overline F(y)).
\]
Choosing first an $f$ with $\widetilde f(y)\ne0$ shows that $\overline F(y)\in\Gamma$; varying $f$ then gives $\overline F(y)=y$.
For every $q\in\Gamma$, the local map $F_{0,q}$ in \eqref{eq:partial-common-collar} has the same partial Poisson identity as $\overline F$.
Injectivity of $P_2^\Gamma$ in the interior gives $F=F_{0,q}$ on their common interior neighborhood.
Consequently, $\overline F$ and its inverse are of class $\Eclass$ up to every point of $\Gamma$.
\end{proof}

\begin{proof}[Proof of Corollary~\ref{cor:global-quasianalytic-local}]
Equation \eqref{eq:local-quasianalytic-data} first yields the accessible collar isometries of Lemma~\ref{lem:partial-common-collar}.
Lemmas~\ref{lem:partial-runge-approximation} and~\ref{lem:partial-runge-jets} show that harmonic functions generated from $\Gamma$ still separate interior points, provide supported harmonic coordinates, and realize the compatible second-order jets.
These functions define the partial Poisson embeddings \eqref{eq:partial-poisson}.

Starting from one accessible collar, Lemma~\ref{lem:partial-poisson-globalization} uses compactness of the ambient manifold and the quasianalytic identity principle in paired supported harmonic coordinates to obtain a global $\Eclass$ diffeomorphism $F:M^\circ\to M^\circ$.
Lemma~\ref{lem:partial-metric-from-harmonic} then uses the supported second-order jets and $n\ge3$ to prove $F^*g_2=g_1$ in the interior.

Finally, Lemma~\ref{lem:partial-boundary-completion} extends $F$ to the metric completions, fixes every point of $\Gamma$, and identifies the extension with the local collar maps near the accessible boundary.
Its restriction to $M^\Gamma$ is therefore an $\Eclass$ diffeomorphism satisfying \eqref{eq:local-quasianalytic-conclusion}, and the same lemma gives the asserted completion isometry.
\end{proof}

\section{Uniqueness with normal quasianalyticity}\label{sec:proof-normal}

\subsection{Boundary determination and quasianalytic continuation}

The proof identifies the common global normal coordinates, recovers all normal derivatives of the metric at the boundary, and applies one-dimensional quasianalyticity along the normal curves.

The decisive point is that the product structure has already fixed the geometry of the normal curves.
Consequently, the boundary symbol need only recover derivatives in the normal variable, and the identity principle is applied separately along each curve $t\mapsto(t,y)$.
This is why arbitrary smooth tangential dependence is compatible with exact uniqueness.
The geometry does not improve the coefficients from one regularity class to another; it removes the unknown interior point correspondence and confines the required continuation to one known variable.

\begin{lemma}\label{lem:common-normal-distance}
For both metrics in \eqref{eq:11}, the coordinate $t$ equals the distance from $\Gamma_0$.
Hence the two prescribed product coordinates define the same global boundary normal coordinates.
\end{lemma}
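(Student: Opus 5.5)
The plan is to show that, for $g=\dd t^2+h(t,y)$ on $[0,L]\times Y$ with $h(t,\cdot)$ positive definite, the function $t$ is exactly the $g$-distance to $\Gamma_0$. The argument has two inequalities, both resting on the fact that $|\nabla_g t|_g=1$. Indeed, in the product coordinates $g^{tt}=1$ and $g^{t\alpha}=0$. Hence $\nabla_g t=\partial_t$ and $g(\partial_t,\partial_t)=1$. This holds for each $j=1,2$ separately, since only the block form \eqref{eq:11} is used.

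\emph{Upper bound.} For $(t_0,y_0)\in M$, the curve $s\mapsto(s,y_0)$, $0\le s\le t_0$, lies in $M$ and joins $\Gamma_0$ to $(t_0,y_0)$. Its speed is $|\partial_t|_g=1$, so its length is $t_0$. Therefore $d_g((t_0,y_0),\Gamma_0)\le t_0$.

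\emph{Lower bound.} Let $c(s)=(t(s),y(s))$, $s\in[0,1]$, be any piecewise $C^1$ curve in $M$ from a point of $\Gamma_0$ to $(t_0,y_0)$. Using $g(\dot c,\dot c)=\dot t^2+h(\dot y,\dot y)\ge\dot t^2$, I get
\[
  \operatorname{length}(c)=\int_0^1\sqrt{\dot t^2+h(\dot y,\dot y)}\dd s\ge\int_0^1|\dot t|\dd s\ge t(1)-t(0)=t_0 .
\]
Taking the infimum over such curves gives $d_g\ge t_0$.

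\emph{Consequences.} The two bounds give $t=d_{g_j}(\cdot,\Gamma_0)$ for $j=1,2$. Thus the fixed product chart is a global boundary normal coordinate system for both metrics. The curves $t\mapsto(t,y)$ are unit-speed $g_j$-geodesics normal to $\Gamma_0$: they are integral curves of $\nabla t$ with $|\nabla t|=1$, and the standard identity $\nabla_{\nabla t}\nabla t=\tfrac12\nabla|\nabla t|^2=0$ shows they are geodesics. They are also minimizing, by the length computation above. Since both metrics use the same chart and the same boundary identification, the normal exponential maps from $\Gamma_0$ coincide as coordinate maps. That is exactly the statement that the two prescribed product coordinates define the same global boundary normal coordinates. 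The same argument with $L-t$ covers $\Gamma_L$.

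The only point needing care is that a competing curve may touch $\partial M$ or leave a coordinate patch of $Y$. This causes no difficulty: the pointwise inequality $g(\dot c,\dot c)\ge\dot t^2$ is chart-independent, because $t$ is a global function and the decomposition \eqref{eq:11} is global. So the argument needs no hypothesis beyond the global product form. In particular, the quasianalytic estimates \eqref{eq:12} are not needed for this lemma.
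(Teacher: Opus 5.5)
Your proposal is correct and follows the paper's argument. The vertical curve $s\mapsto(s,y_0)$ has length $t_0$, and the pointwise bound $\dot t^2+h_j(\dot y,\dot y)\ge\dot t^2$ gives the matching lower bound for every competing curve, for each $g_j$ separately; the only cosmetic difference is that you show the vertical curves are geodesics via $\nabla_{\nabla t}\nabla t=\tfrac12\nabla|\nabla t|^2=0$, whereas the paper reads this off from $\Gamma^i_{tt}=0$.
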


\begin{proof}
We first verify that the product variable in \eqref{eq:11} is the normal distance for both metrics.
The curves $t\mapsto(t,y)$ are unit-speed geodesics normal to $\Gamma_0$ because $g_{tt}=1$ and $g_{t\alpha}=0$ give $\Gamma_{tt}^i=0$.
If $c(s)=(t(s),y(s))$ is any absolutely continuous curve from $\Gamma_0$ to $(t_0,y_0)$, then, for $j=1,2$,
\begin{equation}
  L_{g_j}(c)=\int\sqrt{\dot t^2+h_j(t,y)(\dot y,\dot y)}\dd s
  \ge\int|\dot t|\dd s\ge t_0.
  \label{eq:53}
\end{equation}
The vertical curve realizes equality.
Thus $t=d_{g_j}(\Gamma_0,\cdot)$ for both metrics, and the gauges in \eqref{eq:11} are identical.
\end{proof}

\begin{lemma}\label{lem:normal-boundary-jets}
Equality of the Dirichlet-to-Neumann maps in \eqref{eq:13} implies
\begin{equation*}
  \partial_t^kh_1(0,y)=\partial_t^kh_2(0,y)
  \qquad(k\ge0,\ y\in Y).
\end{equation*}
\end{lemma}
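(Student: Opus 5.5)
The plan is to deduce this from Lemma~\ref{lem:local-boundary-jets}, applied at each point of $\Gamma_0$. The only real work is to check that its hypotheses are met and that its local gauge coincides with the prescribed global product coordinates.

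Fix $q=(0,y_0)\in\Gamma_0$. Choose $\chi\in C_c^\infty(\partial M)$ supported in a small neighborhood of $q$ inside $\Gamma_0$, with $\chi=1$ near $q$. Since \eqref{eq:13} asserts $\Lambda_{g_1}=\Lambda_{g_2}$ on $\Gamma_0\sqcup\Gamma_L$ under the identity map, it follows immediately that
\[
  \chi\Lambda_{g_1}\chi=\chi\Lambda_{g_2}\chi .
\]
By Lemma~\ref{lem:common-normal-distance}, the product coordinates $(t,y)$ are boundary normal coordinates for both metrics near $\Gamma_0$. They also share the same tangential coordinates $y$, because the marking is the identity on $\Gamma_0=\{0\}\times Y$. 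Hence the normal form $g_j=\dd t^2+h_j(t,y)$ used in Lemma~\ref{lem:local-boundary-jets} is precisely \eqref{eq:11} restricted to a local chart of $Y$, and no further diffeomorphism intervenes. Lemma~\ref{lem:local-boundary-jets} then gives $\partial_t^kh_1(0,y)=\partial_t^kh_2(0,y)$ for all $k\ge0$ and all $y$ in a neighborhood of $y_0$. Covering the compact manifold $Y$ by finitely many such charts yields the identity for every $y\in Y$.

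Two points need some care. First, the hypothesis $n\ge3$ (so that $m=n-1\ge2$) is exactly what Step~5 of Lemma~\ref{lem:local-boundary-jets} requires to recover $\delta H_k$ from \eqref{eq:61}; this is assumed in Theorem~\ref{thm:normal-quasianalytic}. Second, Lemma~\ref{lem:local-boundary-jets} requires only smooth metrics and uses no Denjoy--Carleman condition, so the weight hypothesis \eqref{eq:12} plays no role here. In the symbol computation I would use the orientation $\nu=-\partial_t$ at $t=0$, as in Step~3 of that lemma.

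The main obstacle is making sure that the complete symbol of the DN map near $q$ is determined by the geometry near $\Gamma_0$ alone, so that the other boundary component $\Gamma_L$ does not interfere. This follows because the DN map on $\Gamma_0\sqcup\Gamma_L$ has a block structure whose off-diagonal blocks are smoothing, and because the diagonal block at $\Gamma_0$ is congruent, modulo $\Psi^{-\infty}$, to $\mathcal A(0,y,D_y)$ computed from the factorization \eqref{eq:55} near $t=0$. Cutting off with $\chi$ supported in $\Gamma_0$ isolates this block. If one wanted the analogous statement at $\Gamma_L$, the same argument would apply with $t$ replaced by $L-t$. Note, however, that for $\Gamma_L$ the product coordinate $L-t$ is the normal distance to $\Gamma_L$ only locally, and the lemma as stated requires only the jets at $t=0$.
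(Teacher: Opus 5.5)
Your proposal is correct and follows the paper's proof essentially verbatim. Lemma~\ref{lem:common-normal-distance} makes the product variables common boundary normal coordinates; a cutoff inside $\Gamma_0$ isolates the $\Gamma_0$ block, since the contribution through $\Gamma_L$ is smoothing; and Lemma~\ref{lem:local-boundary-jets}, applied in charts covering $Y$, gives the jet equality.

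One harmless slip in your closing aside: $L-t$ is globally the distance to $\Gamma_L$, not only locally. This follows from the length estimate \eqref{eq:53} with $t$ replaced by $L-t$. The point is not needed for the lemma.
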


\begin{proof}
By Lemma~\ref{lem:common-normal-distance}, the prescribed product variables are common boundary normal coordinates.
Localizing \eqref{eq:13} to $\Gamma_0$ gives equality of the complete localized symbols; propagation through the disjoint component $\Gamma_L$ contributes only a smoothing kernel near the diagonal of $\Gamma_0\times\Gamma_0$.
Lemma~\ref{lem:local-boundary-jets}, applied in boundary charts covering $Y$, therefore gives
\begin{equation}
  \partial_t^kh_1(0,y)=\partial_t^kh_2(0,y)
  \qquad(k\ge0,\ y\in Y).
  \label{eq:62}
\end{equation}
\end{proof}

The preceding lemma determines all normal derivatives at the boundary.
The following quasianalyticity argument extends this equality to the entire cylinder.

\begin{lemma}\label{lem:normal-propagation}
Under the estimates \eqref{eq:12} and the quasianalyticity condition \eqref{eq:10}, the jet equality in Lemma~\ref{lem:normal-boundary-jets} implies $h_1=h_2$ on $[0,L]\times Y$.
\end{lemma}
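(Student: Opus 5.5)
The plan is to reduce the statement to the one-dimensional Denjoy--Carleman theorem applied separately along each normal curve $t\mapsto(t,y)$. Fix a boundary coordinate chart of $Y$, a point $y_0$ in it, and a pair of indices $(\mu,\nu)$. Consider the function of one variable
\[
  \phi(t)=(h_1)_{\mu\nu}(t,y_0)-(h_2)_{\mu\nu}(t,y_0),
  \qquad t\in(-\varepsilon,L+\varepsilon),
\]
defined through the assumed smooth extensions. Taking $\alpha=0$ and $K=\{y_0\}$ in \eqref{eq:12}, for each compact $I_0\Subset(-\varepsilon,L+\varepsilon)$ one gets $\sup_{I_0}|\phi^{(k)}|\le 2C R^k W_k$, with $C,R$ the larger of the two constants for $j=1,2$. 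Thus $\phi$ lies in the one-variable Roumieu class $\mathcal E^{\{W\}}$ on every compact subinterval.

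Next I will invoke the Denjoy--Carleman theorem in one variable. Since $W$ is logarithmically convex with $W_0=1$ and satisfies \eqref{eq:10}, the class $\mathcal E^{\{W\}}(I)$ on an open interval $I$ is quasianalytic. Hence a function in it whose derivatives all vanish at one point of $I$ vanishes on all of $I$. I need to make sure the form of the divergence condition matches the classical theorem. Under log-convexity, $\sum W_{k-1}/W_k=\infty$ is equivalent to $\sum W_k^{-1/k}=\infty$ (via Carleman's inequality), so either form of the theorem applies. I will also apply it on an increasing exhaustion of $(-\varepsilon,L+\varepsilon)$ by compact intervals, since the constants depend on $I_0$. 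Quasianalyticity is a local statement about Taylor series, so this exhaustion is harmless.

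By Lemma~\ref{lem:normal-boundary-jets}, $\phi^{(k)}(0)=0$ for every $k\ge0$. Here $t=0$ is an interior point of the extended interval, so the theorem applies there. The derivatives from inside $[0,L]$ coincide with those of the extension, because the extension is smooth. Hence $\phi\equiv0$ on $(-\varepsilon,L+\varepsilon)$, and in particular on $[0,L]$. Since $y_0$, the chart and $(\mu,\nu)$ were arbitrary, $h_1=h_2$ on $[0,L]\times Y$. Then $g_1=g_2$ follows from \eqref{eq:11}. No tangential regularity beyond continuity is used, which is consistent with Remark~\ref{rem:normal-geometry}.

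The main obstacle is not the propagation itself but bookkeeping. First, the jets of Lemma~\ref{lem:normal-boundary-jets} must be exactly the one-sided normal derivatives at $t=0$ of the same coordinate components to which \eqref{eq:12} applies. Lemma~\ref{lem:common-normal-distance} guarantees this, since the product coordinates are the common boundary normal coordinates. Second, the precise version of the one-dimensional Denjoy--Carleman theorem must be stated for sequences normalized only by $W_0=1$ and log-convexity. I would first try citing the classical Denjoy--Carleman theorem in its $\sum W_{k-1}/W_k$ form, which is valid for log-convex sequences. Failing that, I would reduce to the $\sum W_k^{-1/k}$ form through Carleman's inequality.
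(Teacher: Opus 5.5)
Your argument is correct and is essentially the paper's proof. You fix a chart, a point $y$ and a component, use \eqref{eq:12} with $\alpha=0$ to place the one-variable difference in the Roumieu class of $W$ on $(-\varepsilon,L+\varepsilon)$, and apply the one-dimensional Denjoy--Carleman theorem at the interior point $t=0$. One small correction: the implication you actually need, $\sum W_{k-1}/W_k=\infty\Rightarrow\sum W_k^{-1/k}=\infty$, comes from log-convexity with $W_0=1$, which gives $W_k^{1/k}\le W_k/W_{k-1}$; Carleman's inequality only supplies the converse direction.
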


\begin{proof}
Fix a coordinate chart, a point $y$, and indices $\mu,\nu$.
The one-variable component difference
\begin{equation}
  \delta h_{\mu\nu,y}(t)
  =(h_1)_{\mu\nu}(t,y)-(h_2)_{\mu\nu}(t,y)
  \label{eq:63}
\end{equation}
belongs by \eqref{eq:12} to the one-dimensional Roumieu class associated with $W$ on $(-\varepsilon,L+\varepsilon)$.
All its derivatives at the interior point $t=0$ vanish by \eqref{eq:62}.
The Denjoy--Carleman theorem and \eqref{eq:10} give $\delta h_{\mu\nu,y}\equiv0$ on this connected interval.
Since $y$, the chart, and the component were arbitrary, $h_1=h_2$ on $[0,L]\times Y$.
\end{proof}

Lemma~\ref{lem:common-normal-distance} also explains why no residual diffeomorphism remains.
The coordinate $t$ is the distance from $\Gamma_0$ for both metrics, while $y$ labels the normal geodesic starting at $(0,y)$.
A boundary-fixing isometry must preserve both labels and is therefore the identity in this common global normal gauge.

\begin{proof}[Proof of Theorem~\ref{thm:normal-quasianalytic}]
Lemma~\ref{lem:common-normal-distance} shows that the prescribed variables already form a common global boundary normal gauge, so no interior diffeomorphism has to be constructed.
Equality of the full Dirichlet-to-Neumann maps and the smooth boundary symbol recursion in Lemma~\ref{lem:local-boundary-jets}, applied through Lemma~\ref{lem:normal-boundary-jets}, give
\[
  \partial_t^kh_1(0,y)=\partial_t^kh_2(0,y)
  \qquad(k\ge0, y\in Y).
\]
For each fixed $y$ and each tensor component, estimate \eqref{eq:12} places the difference in the one-dimensional quasianalytic class associated with $W$.
Condition \eqref{eq:10} and Lemma~\ref{lem:normal-propagation} therefore imply equality along the entire normal curve through $(0,y)$.
Varying $y$ gives $h_1=h_2$ on $[0,L]\times Y$, and hence $g_1=g_2$ in the prescribed coordinates.
\end{proof}

\section{Tangentially homogeneous smooth uniqueness}\label{sec:proof-homogeneous}

\subsection{Fourier reduction and inverse spectral theory}

Throughout this section, smooth means $C^\infty$; no analytic or quasianalytic regularity is assumed.
The proof first block-diagonalizes the Dirichlet-to-Neumann map into $2\times2$ endpoint blocks indexed by tangential Fourier modes, then converts these blocks into discrete samples of one-dimensional Weyl functions, and finally recovers the metric in the original normal coordinate.

Tangential homogeneity permits an exact separation of variables in place of a continuation argument.
Each direction $w$ in the finite polarization set below yields a one-dimensional inverse spectral problem, with the modes $k=Nw$ supplying its discrete spectral data and with a direction-dependent Liouville coordinate.
The last part of the proof identifies these coordinates with the prescribed normal coordinate.
The full Dirichlet-to-Neumann map supplies Weyl samples at every positive integer together with the Liouville interval lengths, rather than only a boundary Taylor series.
These data allow inverse spectral uniqueness for smooth one-dimensional potentials to replace the Denjoy--Carleman identity principle.
We use the standard Weyl--Titchmarsh and transformation-operator framework for Sturm--Liouville operators~\cite{Marchenko1986}, in a local form adapted to the discrete negative energies supplied by the Fourier modes.
Let $e_1,\ldots,e_m$ be the standard basis of $\Z^m$.
Throughout this section, we use the fixed polarization set
\[
  \mathcal W
  =\{e_a:1\le a\le m\}
  \cup\{e_a+e_b:1\le a<b\le m\}.
\]
The values of a quadratic form on \(\mathcal W\) determine all its matrix entries.

\medskip
\noindent\emph{Outline of the reconstruction.}
The argument contains several changes of variables, so it is useful to state the entire chain before proving any individual step.
Fix a nonzero lattice direction $w$ and use the modes $k=Nw$, $N=1,2,\ldots$.
The reconstruction consists of the following stages.
\begin{enumerate}
  \item Restricting the full DN map to the Fourier mode $e^{iNw\cdot y}$ gives a $2\times2$ matrix: the two input numbers are the Dirichlet values at $t=0,L$, and the two output numbers are the corresponding outward normal derivatives.
  \item The limit $N\to\infty$ of the diagonal entries recovers the endpoint quadratic forms $w^Th(0)^{-1}w$ and $w^Th(L)^{-1}w$.
  A finite family of directions and polarization recover the endpoint matrices $h(0)$ and $h(L)$, hence the endpoint density factors $\sqrt{\det h(0)}$ and $\sqrt{\det h(L)}$.
  \item Multiplying the pointwise normal derivatives by these known density factors converts the matrix into the symmetric weighted flux matrix dictated by Green's identity.
  \item A Liouville change of variables converts the scalar Fourier equation into a Schrödinger equation.
  One diagonal entry of the weighted flux matrix then gives the initial value of the Liouville factor, its logarithmic derivative, and the Weyl samples $\mathfrak m_{j,w}(N)$.
  The off-diagonal entry gives the Liouville interval length $S_{j,w}$.
  \item The discrete Weyl samples determine the potential $V_{j,w}$ by Lemma~\ref{lem:discrete-weyl}.
  The elementary ODE $\mu_{j,w}''=V_{j,w}\mu_{j,w}$, with the already known initial data, then determines the positive Liouville factor $\mu_{j,w}$.
  \item The Liouville coordinate $s_{j,w}$ depends on both the metric index $j$ and the direction $w$, so the recovered functions cannot yet be combined.
  Lemma~\ref{lem:liouville-synchronization} replaces these coordinates by one common variable $\tau$, uses the recovered quadratic forms to reconstruct $(\det h)h^{-1}$, and finally returns from $\tau$ to the prescribed normal coordinate $t$.
\end{enumerate}
Thus the inverse spectral theorem is not being asked to recover the whole metric at once.
It recovers one scalar directional quantity; finitely many such quantities are combined only after their coordinates have been synchronized.

\begin{lemma}\label{lem:fourier-density-flux}
For each $k\in\Z^m$, the Fourier mode $u(t)e^{ik\cdot y}$ reduces the harmonic equation to a scalar Sturm--Liouville equation.
Once the endpoint density factors are known, the Dirichlet-to-Neumann map determines the corresponding real symmetric weighted endpoint flux matrix.
\end{lemma}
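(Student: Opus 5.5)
We compute the Laplace--Beltrami operator of $g=\dd t^2+h_{\alpha\beta}(t)\dd y^\alpha\dd y^\beta$ directly. The density $|g|^{1/2}=\sqrt{\det h(t)}=:\delta(t)$ depends only on $t$, and $g^{tt}=1$, $g^{t\alpha}=0$, $g^{\alpha\beta}=h^{\alpha\beta}(t)$. Hence
\[
  \Delta_g u=\delta^{-1}\partial_t(\delta\,\partial_t u)+h^{\alpha\beta}(t)\partial_\alpha\partial_\beta u .
\]
For $u=v(t)e^{ik\cdot y}$ this becomes
\[
  (\delta v')'-\delta\,q_k(t)\,v=0,
  \qquad q_k(t)=k^Th(t)^{-1}k\ge0,
\]
which is a scalar Sturm--Liouville equation on $[0,L]$. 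For $k\ne0$ we have $q_k>0$, so the Dirichlet problem with prescribed values $v(0)=a$, $v(L)=b$ is uniquely solvable: the quadratic form $\int_0^L\delta(|v'|^2+q_k|v|^2)\dd t$ is coercive. For $k=0$ the solution is $v(t)=a+(b-a)\int_0^t\delta^{-1}/\int_0^L\delta^{-1}$. Uniqueness of the full Dirichlet problem on $M$ shows that the harmonic extension of $f=a\,e^{ik\cdot y}$ on $\Gamma_0$ and $b\,e^{ik\cdot y}$ on $\Gamma_L$ is exactly $v(t)e^{ik\cdot y}$. Consequently $\Lambda_g$ leaves each two-dimensional space spanned by the mode on the two boundary components invariant. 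The outward normals are $-\partial_t$ at $t=0$ and $\partial_t$ at $t=L$. On this space $\Lambda_g$ therefore acts as the $2\times2$ matrix
\[
  \Lambda_k:(a,b)\mapsto(-v'(0),\,v'(L)),
\]
and this matrix is read off from the DN map in the common Fourier basis.

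Next we form the weighted flux matrix
\[
  \mathcal F_k=\operatorname{diag}(\delta(0),\delta(L))\,\Lambda_k,
\]
which requires the endpoint density factors, assumed known. We check that $\mathcal F_k$ is real and symmetric using Green's identity. Take two real Dirichlet data $(a_1,b_1)$ and $(a_2,b_2)$ with solutions $v_1,v_2$ (real, since the coefficients are real). Multiplying the equation for $v_1$ by $v_2$ and integrating by parts gives
\[
  \bigl[\delta v_1'v_2\bigr]_0^L=\int_0^L\delta\,(v_1'v_2'+q_kv_1v_2)\dd t .
\]
The left side equals $\delta(L)v_1'(L)b_2-\delta(0)v_1'(0)a_2=(a_2,b_2)\cdot\mathcal F_k(a_1,b_1)$. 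The right side is symmetric in the indices $1,2$, so $\mathcal F_k$ is a real symmetric matrix, and indeed positive semidefinite. The same identity with complex data yields the Hermitian version. Realness of the entries also follows from the reality of $\delta$ and $q_k$.

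The only delicate point is bookkeeping. We must make sure that the identification of $\Lambda_k$ from the DN map uses the same Fourier normalization on both components, and that the boundary measure convention matches. The pointwise normal derivative $\Lambda_g$ differs from the weak conductivity form by exactly the density $\dd S_g=\delta\,\dd y$, which is why the factors $\delta(0),\delta(L)$ appear. Once the outward-normal signs are fixed as above, the rest is the routine computation described.
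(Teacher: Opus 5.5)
Your proof is correct and follows the same route as the paper: you separate variables to get $(\omega u_t)_t-\omega\,k^Th^{-1}k\,u=0$, use the $y$-independence of the coefficients so that the DN map preserves each Fourier mode, and apply Green's identity to show that $\operatorname{diag}(\omega(0),\omega(L))$ times the $2\times2$ endpoint block is real symmetric. You also supply details the paper leaves implicit, namely unique solvability of the one-dimensional Dirichlet problem, identification of the harmonic extension by uniqueness on $M$, and the explicit integration by parts.
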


\begin{proof}
We use the convention
\[
  \Delta_g=|g|^{-1/2}\partial_i(|g|^{1/2}g^{ij}\partial_j).
\]
Put
\[
  \omega_j(t)=\sqrt{\det h_j(t)}.
\]
This is the density factor of the slice metric \(h_j(t)\) relative to the fixed coordinate measure on \(\T^m\).
Since the coefficients are independent of $y$, the DN map preserves every Fourier space generated by $e^{ik\cdot y}$, $k\in\Z^m$.
A harmonic function $u(t)e^{ik\cdot y}$ satisfies
\begin{equation}
  (\omega_ju_t)_t-\omega_jk^Th_j^{-1}ku=0.
  \label{eq:64}
\end{equation}
The numerical matrix which maps endpoint values to the pointwise normal derivatives
\[
  (-u_t(0),u_t(L))
\]
need not be symmetric when $\omega_j(0)\ne\omega_j(L)$.
Green's identity instead makes the weighted flux matrix
\begin{equation}
  (u(0),u(L))
  \longmapsto(-\omega_j(0)u_t(0),\omega_j(L)u_t(L))
  \label{eq:65}
\end{equation}
  real symmetric.
We shall first recover the endpoint density factors from the Dirichlet-to-Neumann map and only then pass to \eqref{eq:65}.
\end{proof}

\begin{lemma}\label{lem:directional-weyl-data}
Fix $w\in\Z^m\setminus\{0\}$.
Equality of the Dirichlet-to-Neumann maps determines the common endpoint tangential metrics and density factors.
After the directional Liouville transforms are formed, the two positive Liouville factors have the same value and first derivative at the observed endpoint, their Weyl quotients agree at every positive integer, and their Liouville intervals have the same length.
\end{lemma}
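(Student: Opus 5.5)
Fix $w\in\Z^m\setminus\{0\}$ and work with the Fourier modes $k=Nw$, $N=1,2,\ldots$. By Lemma~\ref{lem:fourier-density-flux}, $\Lambda_{g_j}$ acts on the span of $e^{iNw\cdot y}$ on $\{0,L\}\times\T^m$ as a $2\times2$ matrix $\mathcal N_{j}(N)$. This matrix sends the endpoint values $(u(0),u(L))$ to the pointwise outward derivatives $(-u_t(0),u_t(L))$ of the solution of \eqref{eq:64}. The hypothesis gives $\mathcal N_1(N)=\mathcal N_2(N)$ for every $N$.

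\emph{Endpoint metrics.} I would first show that the $(1,1)$ entry satisfies $N^{-1}\mathcal N_j(N)_{11}\to(w^Th_j(0)^{-1}w)^{1/2}$ as $N\to\infty$. The proof is a WKB/Riccati argument. Put $q=-u_t/u$ for the solution with $u(L)=0$. Then $q$ solves $q_t=q^2+(\log\omega_j)_tq-N^2w^Th_j^{-1}w$. For large $N$ it stays uniformly close to the positive root $N(w^Th_j^{-1}w)^{1/2}$ away from $t=L$. Equivalently, this is the principal symbol \eqref{eq:57} of Lemma~\ref{lem:local-boundary-jets} restricted to the invariant mode.

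The same limit for the $(2,2)$ entry gives the analogous quantity at $t=L$. Running $w$ over $\mathcal W$ and polarizing yields $h_1(0)=h_2(0)$ and $h_1(L)=h_2(L)$. Consequently $\omega_1$ and $\omega_2$ agree at both endpoints, and by Lemma~\ref{lem:fourier-density-flux} the weighted flux matrices \eqref{eq:65} coincide for every $N$.

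\emph{Liouville transform.} Set $p_{j,w}=\omega_j$ and $r_{j,w}=\omega_jw^Th_j^{-1}w$, so that \eqref{eq:64} reads $(pu_t)_t=N^2ru$. Define $s=\int_0^t(r/p)^{1/2}$ and $\mu=(pr)^{1/4}$, and put $v=\mu u$. This converts \eqref{eq:64} into $-v_{ss}+V_{j,w}v=-N^2v$ on $[0,S_{j,w}]$, with $V=\mu''/\mu$ computed in $s$.

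The value $\mu(0)$ is $(\omega(0)^2w^Th(0)^{-1}w)^{1/4}$, which is already known. Next I would expand the Dirichlet-at-$L$ entry of the flux matrix in $N$. Its $N^0$ term involves $\mu_s(0)/\mu(0)$ through the relation $u_t=(r/p)^{1/2}(v_s/\mu-\mu_sv/\mu^2)$, so equality of the entries determines $\mu_s(0)$. After subtracting these known terms, the remaining part of the $(1,1)$ entry is an affine transform, with known coefficients, of the Weyl quotient $\mathfrak m_{j,w}(N)=v_s(0)/v(0)$ for the solution vanishing at $s=S$. Hence $\mathfrak m_{1,w}(N)=\mathfrak m_{2,w}(N)$ for all $N\ge1$.

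\emph{Interval lengths.} The off-diagonal entry equals $\omega(0)u_t(0)$ for the solution with $u(0)=0$ and $u(L)=1$. In Liouville form it behaves like $2N\mu(0)\mu(S)e^{-NS}(1+o(1))$. Its exponential decay rate therefore gives $S_{1,w}=S_{2,w}$, and its prefactor additionally gives $\mu(S)$.

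The main obstacle is making the large-$N$ asymptotics rigorous with only $C^\infty$ coefficients. This is especially delicate for the off-diagonal entry, which is exponentially small, so that reading off $S$ requires a uniform WKB representation rather than only a symbol expansion. My first attempt would use the transformation-operator representation of the Jost-type solution $v(s,N)=e^{-Ns}\bigl(1+\int_s^\infty K(s,\tau)e^{-N(\tau-s)}\dd\tau\bigr)$ from \cite{Marchenko1986}, with $V$ extended smoothly past the endpoint. This representation gives both the Weyl expansion and the exponential asymptotics of the off-diagonal entry with controlled error.
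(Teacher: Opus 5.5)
Your proposal is correct and follows essentially the same route as the paper. It recovers $h_j(0),h_j(L)$ from the growth rate of the diagonal entries and polarization, then passes to the weighted flux matrix. It reads off $\mu_{j,w}(0)$ and $\mu_{j,w}'(0)$ from $D^{00}_{j,w}(N)=\mu_{j,w}(0)^2\mathfrak m_{j,w}(N)+\mu_{j,w}(0)\mu_{j,w}'(0)$ using $\mathfrak m_{j,w}(N)=N+O(N^{-1})$, and gets $S_w$ from the off-diagonal entry $-\mu_{j,w}(0)\mu_{j,w}(S_{j,w})/\varphi_{j,w,N}(S_{j,w})$.

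The differences are cosmetic. You use a Riccati argument in place of the $A$-amplitude for the leading limit; the linear term should read $-(\log\omega_j)_tq$, which does not affect the leading asymptotics. You also use the decay rate rather than matched prefactors to identify $S_w$. Your worry about the off-diagonal entry is overstated: that formula is exact, and the exponentially large $\varphi_{j,w,N}(S_{j,w})$ is controlled by variation of constants and Gronwall alone, as in the paper.
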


\begin{proof}
Fix a nonzero $w\in\Z^m$ and put $k=Nw$.
For either metric define the directional coefficient \(q_{j,w}\), its Liouville coordinate \(s_{j,w}\), and the corresponding interval length \(S_{j,w}\) by
\begin{equation}
  q_{j,w}(t)=w^Th_j(t)^{-1}w,
  \qquad
  s_{j,w}(t)=\int_0^t\sqrt{q_{j,w}(r)}\dd r,
  \qquad S_{j,w}=s_{j,w}(L),
  \label{eq:66}
\end{equation}
Define the positive Liouville factor by
\begin{equation}
  \mu_{j,w}(s_{j,w}(t))=(\omega_j(t)^2q_{j,w}(t))^{1/4}.
  \label{eq:67}
\end{equation}
Define \(v\) by
\[
  v(s_{j,w}(t))=\mu_{j,w}(s_{j,w}(t))u(t).
\]
Then \eqref{eq:64} becomes
\begin{equation}
  -v''+V_{j,w}v=-N^2v,
  \qquad V_{j,w}=\frac{\mu_{j,w}''}{\mu_{j,w}},
  \qquad0<s<S_{j,w},
  \label{eq:68}
\end{equation}
Here and below, primes on $v$ and $\mu_{j,w}$ mean $s$-derivatives, while \(u_t\) denotes the \(t\)-derivative.
The function \(V_{j,w}\) is the Liouville potential.
Indeed,
\(\dd s_{j,w}/\dd t=\sqrt{q_{j,w}}\) and
\(\mu_{j,w}^2=\omega_j\sqrt{q_{j,w}}\), so direct differentiation gives the flux identity
\begin{equation}
  \omega_j(t)u_t(t)
  =\mu_{j,w}(s)v_s(s)-v(s)(\mu_{j,w})_s(s).
  \label{eq:69}
\end{equation}
Here \(s=s_{j,w}(t)\).
Before taking a Weyl quotient, we verify that its denominator cannot vanish.
Suppose a solution of \eqref{eq:64} with $k=Nw$ vanishes at both endpoints.
Multiplying the equation by $\overline u$, integrating by parts, and using the two zero boundary values gives
\begin{equation}
  \int_0^L\omega_j(t)
  \left(|u_t(t)|^2+N^2q_{j,w}(t)|u(t)|^2\right)\dd t=0.
  \label{eq:weyl-nonresonance}
\end{equation}
Both coefficients are strictly positive, so $u\equiv0$.
The Liouville transform is invertible because $\mu_{j,w}>0$; hence the only solution of \eqref{eq:68} that vanishes at both endpoints is also the zero solution.
Consequently any nonzero solution satisfying $v(S_{j,w})=0$ has $v(0)\ne0$.
This proves that the following one-dimensional Weyl quotient is well defined for every real $N>0$, rather than merely away from an unspecified set of poles.
The Fourier data use it only at the positive integers.
Let $\mathfrak m_{j,w}(N)=-v'(0)/v(0)$ for any nonzero solution of \eqref{eq:68} satisfying $v(S_{j,w})=0$.
The Volterra equations for the fundamental solutions, equivalently the $A$-amplitude representation \eqref{eq:80} used below, give
\begin{equation}
  \mathfrak m_{j,w}(N)=N+O(N^{-1})\qquad(N\to\infty).
  \label{eq:70}
\end{equation}
Indeed, in \eqref{eq:80} the amplitude is bounded near zero, so its Laplace integral is $O(N^{-1})$, while the contribution from the right endpoint is exponentially small.
The weaker estimate $\mathfrak m_{j,w}(N)=N+O(1)$ would already suffice for the next limit.

For the solution with $u(0)=1$, $u(L)=0$, \eqref{eq:69} gives
\[
  -u_t(0)=\sqrt{q_{j,w}(0)}
  \left(\mathfrak m_{j,w}(N)+\frac{\mu_{j,w}'(0)}{\mu_{j,w}(0)}\right).
\]
Consequently,
\begin{equation}
  \lim_{N\to\infty}\frac{-u_t(0)}N=\sqrt{w^Th_j(0)^{-1}w}.
  \label{eq:71}
\end{equation}
The same calculation after reversing $t$ gives the corresponding limit at $t=L$.
Using the fixed set \(\mathcal W\) and polarizing determines $h_j(0)^{-1}$ and $h_j(L)^{-1}$.
Equality of the Dirichlet-to-Neumann maps therefore implies
\begin{equation}
  h_1(0)=h_2(0),\quad h_1(L)=h_2(L),\quad
  \omega_1(0)=\omega_2(0),\quad \omega_1(L)=\omega_2(L).
  \label{eq:72}
\end{equation}
We may hence multiply the two equal Dirichlet-to-Neumann maps by the common endpoint density factors.
Thus their weighted flux matrices \eqref{eq:65} agree for every lattice frequency.

Let $D_{j,w}^{00}(N)$ be the left diagonal entry of this weighted flux matrix.
Since $v(0)=\mu_{j,w}(0)$ when $u(0)=1$, formula \eqref{eq:69} yields
\begin{equation}
  D_{j,w}^{00}(N)
  =\mu_{j,w}(0)^2\mathfrak m_{j,w}(N)+\mu_{j,w}(0)\mu_{j,w}'(0).
  \label{eq:73}
\end{equation}
It follows from the common data and \eqref{eq:70} that
\begin{equation}
  \mu_{1,w}(0)^2=\mu_{2,w}(0)^2
  =\lim_{N\to\infty}\frac{D_{1,w}^{00}(N)}N,
  \label{eq:74}
\end{equation}
and
\begin{equation}
  \frac{\mu_{1,w}'(0)}{\mu_{1,w}(0)}
  =\frac{\mu_{2,w}'(0)}{\mu_{2,w}(0)}
  =\lim_{N\to\infty}
  \left(\frac{D_{1,w}^{00}(N)}{\mu_{1,w}(0)^2}-N\right).
  \label{eq:75}
\end{equation}
Subtracting the common constant term in \eqref{eq:73} then gives
\begin{equation}
  \mathfrak m_{1,w}(N)=\mathfrak m_{2,w}(N)\qquad(N=1,2,\ldots).
  \label{eq:76}
\end{equation}

The off-diagonal entry determines the Liouville interval length.
Let $\varphi_{j,w,N}$ solve
\[
  -\varphi_{j,w,N}''+V_{j,w}\varphi_{j,w,N}
  =-N^2\varphi_{j,w,N},
  \qquad \varphi_{j,w,N}(0)=0,
  \qquad \varphi_{j,w,N}'(0)=1,
  \qquad 0<s<S_{j,w}.
\]
For right endpoint value one and left endpoint value zero, \eqref{eq:69} shows that the left component of the weighted flux is
\begin{equation}
  -\frac{\mu_{j,w}(0)\mu_{j,w}(S_{j,w})}
  {\varphi_{j,w,N}(S_{j,w})}.
  \label{eq:77}
\end{equation}
The Volterra equation gives
\begin{equation}
  \varphi_{j,w,N}(S_{j,w})
  =\frac{e^{NS_{j,w}}}{2N}(1+O(N^{-1})),
  \qquad
  \lim_{N\to\infty}\frac1N
  \log|\varphi_{j,w,N}(S_{j,w})|=S_{j,w}.
  \label{eq:78}
\end{equation}
Indeed, variation of constants gives
\[
  \varphi_{j,w,N}(s)=\frac{\sinh(Ns)}N+
  \int_0^s\frac{\sinh(N(s-r))}N
  V_{j,w}(r)\varphi_{j,w,N}(r)\dd r;
\]
after multiplication by $e^{-Ns}$, Gronwall's inequality gives a uniform bound, and one substitution in the integral gives the stated relative $O(N^{-1})$ error.
In particular $\varphi_{j,w,N}(S_{j,w})$ is nonzero for all sufficiently large $N$.
Both endpoint factors in \eqref{eq:77} are already common by \eqref{eq:72} and \eqref{eq:67}.
Equality of the off-diagonal entries therefore implies
\begin{equation}
  S_{1,w}=S_{2,w}=:S_w.
  \label{eq:79}
\end{equation}
\end{proof}

The next three lemmas isolate the inverse spectral step.
The first one records, in the precise form needed here, the external transformation-operator result.
The second is an elementary uniqueness theorem for discrete exponential moments and is proved in full.
Only then do we combine the two inputs to pass from discrete samples of the Weyl function to the potential.

\begin{lemma}\label{lem:a-amplitude}
Let \(V\in C^\infty([0,S];\R)\), and let
\[
  \mathfrak m(\kappa)=-\frac{v'(0)}{v(0)},\qquad
  -v''+Vv=-\kappa^2v,\qquad v(S)=0.
\]
Here $v$ is any nonzero solution satisfying the displayed endpoint condition.
There is a function \(\mathfrak a_V\in C([0,S);\R)\) such that, for every \(0<b<S\) and every \(\varepsilon>0\),
\begin{equation}
  \mathfrak m(\kappa)
  =\kappa+\int_0^b\mathfrak a_V(\alpha)e^{-2\alpha\kappa}\dd\alpha
  +O(e^{-(2b-\varepsilon)\kappa})
  \qquad(\kappa\to+\infty).
  \label{eq:80}
\end{equation}
Moreover, for \(0<a<b\), the restriction \(\mathfrak a_V|_{[0,a]}\) determines \(V|_{[0,a]}\), and conversely.
In particular, if two amplitudes agree on \([0,a]\), then the two potentials agree there.
\end{lemma}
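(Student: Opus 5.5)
This is Simon's $A$-amplitude representation of the Weyl function, adapted to a finite interval with Dirichlet condition at the far endpoint. I will reduce to the half-line case through a transformation operator, and then prove the local uniqueness statement in both directions.

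\emph{Step 1: reduction to a half-line problem.} Fix $0<b<S$ and extend $V|_{[0,b']}$, for some $b<b'<S$, to a compactly supported smooth potential $\widetilde V$ on $[0,\infty)$. For $\kappa$ large there is a unique solution $\psi_\kappa$ of $-\psi''+\widetilde V\psi=-\kappa^2\psi$ on $[0,\infty)$ that decays like $e^{-\kappa s}$; let $\widetilde{\mathfrak m}=-\psi_\kappa'(0)/\psi_\kappa(0)$. The finite-interval solution $v$ with $v(S)=0$ is, on $[0,b']$, a combination $\psi_\kappa+c(\kappa)\phi_\kappa$, where $\phi_\kappa$ is the growing solution. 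The coefficient satisfies $c(\kappa)=O(e^{-2(S-\epsilon)\kappa})$ after the normalization $\phi_\kappa\sim e^{\kappa s}$, since matching at $s=S$ forces the decaying and growing parts to balance there. The Volterra/Gronwall bounds already used for \eqref{eq:78} control both solutions uniformly, so $\mathfrak m-\widetilde{\mathfrak m}=O(e^{-(2b-\varepsilon)\kappa})$. Hence it suffices to prove \eqref{eq:80} for $\widetilde{\mathfrak m}$.

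\emph{Step 2: amplitude representation on the half line.} For compactly supported smooth $\widetilde V$, the Jost solution has the Levin/Marchenko transformation-operator form $\psi_\kappa(s)=e^{-\kappa s}+\int_s^\infty K(s,r)e^{-\kappa r}\dd r$ with $K$ smooth, as in~\cite{Marchenko1986}. Expanding $\widetilde{\mathfrak m}$ gives a Laplace-transform representation $\widetilde{\mathfrak m}(\kappa)=\kappa+\int_0^\infty\mathfrak a(\alpha)e^{-2\alpha\kappa}\dd\alpha$ for large $\kappa$, with $\mathfrak a$ continuous. This is Simon's theorem; the Gesztesy--Simon argument derives it from a Riccati/Volterra equation for $\mathfrak a$. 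Truncating the integral at $b$ costs $O(e^{-(2b-\varepsilon)\kappa})$. I must also check that $\mathfrak a$ on $[0,b]$ does not depend on the chosen extension $\widetilde V$. This follows from the local dependence in Step 3, since $\mathfrak a(\alpha)$ depends only on $\widetilde V|_{[0,\alpha]}$. I then define $\mathfrak a_V$ on $[0,S)$ by letting $b\uparrow S$.

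\emph{Step 3: local equivalence between $\mathfrak a_V|_{[0,a]}$ and $V|_{[0,a]}$.} I will use the Simon/Gesztesy--Simon structure $\mathfrak a(\alpha)=V(\alpha)+\mathcal F(\alpha)$. Here $\mathcal F(\alpha)$ is determined by $V|_{[0,\alpha]}$ through the nonlinear Volterra (Riccati) equation $\partial_\ell A=\partial_\alpha A+\int_0^\alpha A(\beta,\ell)A(\alpha-\beta,\ell)\dd\beta$ with $A(\alpha,\ell)=V(\alpha+\ell)$ at the boundary $\alpha=0$, i.e.\ $A(0,\ell)=V(\ell)$. The dependence is causal in both directions. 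Given $V|_{[0,a]}$, this system determines $A$ on the triangle $\alpha+\ell\le a$, and so determines $\mathfrak a=A(\cdot,0)$ on $[0,a]$. Conversely, given $\mathfrak a|_{[0,a]}$, the same equation solved as a Goursat problem reconstructs $A$ on the triangle. Uniqueness comes from a Gronwall argument applied to the difference of two solutions, and it recovers $V(\ell)=A(0,\ell)$ for $\ell\le a$.

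The main obstacle is Step 3. One must justify this Goursat/Volterra uniqueness, including the regularity of $A$ near the boundary of the triangle. I would first try to cite~\cite{Marchenko1986} and Simon's local Borg--Marchenko theorem directly. The remaining work is to verify that the finite-interval setting, after Step 1, is covered by that theorem.
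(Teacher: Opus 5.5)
Your proposal follows essentially the same route as the paper, whose proof simply invokes Simon's $A$-amplitude representation and the Gesztesy--Simon local correspondence, namely the Volterra/Riccati equation for $A(\alpha,\ell)$ with $A(0,\ell)=V(\ell)$, which you also end up citing.

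Two small remarks. First, your Step 1 is unnecessary, since Simon's representation already covers a finite interval with a boundary condition at the far endpoint. Second, as written, the estimate on $c(\kappa)$ is misjustified. Because $\widetilde V\neq V$ on $(b',S]$, the matching must be done at $s=b'$ rather than $s=S$. With the second solution normalized at $s=0$, this gives $c(\kappa)=O(e^{-2b'\kappa})$ up to powers of $\kappa$, not $O(e^{-2(S-\epsilon)\kappa})$. This weaker bound is still enough, because $b'>b$.
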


\begin{proof}
The representation and the local correspondence between the $A$-amplitude and the potential are due to Simon and Gesztesy--Simon~\cite{Simon1999,GesztesySimon2000II}; the corresponding local Borg--Marchenko consequence is proved in~\cite{GesztesySimon2000,Bennewitz2001}.
We state the precise form used below.
The cited papers use the quotient \(v'(0)/v(0)\), whereas our convention is its negative; this accounts for the signs in \eqref{eq:80}.

Here is the geometric meaning of the formula.
The first term \(\kappa\) is the Weyl response of the free half-line equation.
The value \(\mathfrak a_V(\alpha)\) records the correction created by the potential at depth \(\alpha\) from the observed endpoint \(s=0\).
The kernel \(e^{-2\alpha\kappa}\) suppresses distant information at high energy, and the factor \(2\alpha\) represents travel from the observed endpoint to depth \(\alpha\) and back.
Consequently, knowledge of the response up to an error smaller than \(e^{-2a\kappa}\) determines the potential up to depth \(a\).

The construction of \(\mathfrak a_V\) uses a Volterra transformation operator for the Schrödinger equation.
The Volterra, or triangular, structure is what makes the last assertion local: determining \(\mathfrak a_V\) on \([0,a]\) requires only \(V\) on \([0,a]\), and the corresponding integral equation can be solved in the reverse direction on the same interval.
We use exactly this stated correspondence below; no analyticity of \(V\) is assumed.
\end{proof}

\begin{lemma}\label{lem:discrete-exponential-moments}
Let \(\ell>0\) and \(G\in C([0,\ell])\).
If, for some integer \(N_0\),
\begin{equation}
  \sup_{N\ge N_0}
  N\left|\int_0^\ell e^{Nr}G(r)\dd r\right|<\infty,
  \qquad N\in\mathbb N,
  \label{eq:discrete-moment-assumption}
\end{equation}
then \(G=0\) on \([0,\ell]\).
\end{lemma}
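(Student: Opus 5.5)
The plan is to pass to the entire function
\[
  F(z)=\int_0^\ell e^{zr}G(r)\dd r ,\qquad z\in\mathbb C,
\]
and to show that it has exponential type zero. Paley--Wiener then forces \(G=0\). Two bounds are immediate: \(|F(z)|\le\|G\|_{L^1}\) for \(\operatorname{Re}z\le0\), and \(|F(z)|\le\|G\|_{L^1}e^{\ell\operatorname{Re}z}\) for \(\operatorname{Re}z\ge0\). So \(F\) has exponential type at most \(\ell\), with indicator \(h_F(\pm\pi/2)=0\) on the imaginary axis. The hypothesis \eqref{eq:discrete-moment-assumption} says exactly that \(|F(N)|\le C/N\) for integers \(N\ge N_0\). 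In particular \(\limsup_{N\to\infty}N^{-1}\log|F(N)|\le0\) along the integers.

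The key step, which I expect to be the main obstacle, is to upgrade this integer information to the whole positive real axis, that is, to prove \(h_F(0)\le0\). I would first invoke the Cartwright--Carlson theorem on the right half-plane. If a function of exponential type in \(\operatorname{Re}z\ge0\) satisfies \(h(\pi/2)+h(-\pi/2)<2\pi\), then its growth along the integers controls its growth along the positive axis. Here \(h(\pm\pi/2)=0\), so the sum is \(0<2\pi\), and we get \(h_F(0)\le\limsup_N N^{-1}\log|F(N)|\le0\). If a self-contained argument is preferred, I would instead reproduce the standard proof. One compares \(F\) with an interpolation built from \(\sin(\pi z)\), namely \(\sin(\pi z)\sum_N F(N)(-1)^N/(\pi(z-N))\). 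The difference vanishes at the integers, and by Carlson's theorem it has the correct type; the margin comes precisely from the vanishing imaginary-axis indicator. The interpolation series itself is bounded by \(e^{\varepsilon|z|}\) on the positive axis, because \(|F(N)|=O(1/N)\).

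With \(h_F(0)\le0\) in hand, I would apply Phragmén--Lindelöf in the two quadrants \(\{\operatorname{Re}z\ge0,\ \pm\operatorname{Im}z\ge0\}\). The boundary bounds are \(O(1)\) on the imaginary axis and \(O(e^{\varepsilon x})\) on the positive axis, and the a priori growth is of exponential type. Together these give \(|F(z)|\le C_\varepsilon e^{\varepsilon|z|}\) in the right half-plane for every \(\varepsilon>0\). Combined with the bound in the left half-plane, \(F\) is entire of exponential type zero.

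Finally, \(\zeta\mapsto F(i\zeta)\) is the Fourier transform of the compactly supported function \(G\mathbf 1_{[0,\ell]}\). By the Paley--Wiener theorem, or directly by the Titchmarsh support theorem, the exponential type of such a transform in the directions \(\pm\) equals the endpoints of the convex hull of the support. Type zero therefore forces \(\operatorname{supp}G\subset\{0\}\), and continuity gives \(G\equiv0\) on \([0,\ell]\).

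As an alternative route, in case the Cartwright step is awkward to cite in this form, I would substitute \(y=e^{r-\ell}\). The hypothesis becomes \(\int_{e^{-\ell}}^1y^{N-1}g(y)\dd y=O(e^{-\ell N}/N)\) with \(g(y)=G(\ell+\log y)\). One could then try to show directly that \(g\) vanishes near \(y=1\), using Müntz-type density of \(\{y^{N}\}\) on \([e^{-\ell},1]\), and iterate inward. The complex-analytic route above seems cleaner.
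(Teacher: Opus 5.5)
Your argument is correct if you cite the strong form of Carlson's theorem as a black box. That theorem says: for $f$ of exponential type in $\operatorname{Re}z\ge0$ with $h_f(\pi/2)+h_f(-\pi/2)<2\pi$, one has $h_f(0)=\limsup_{N\to\infty}N^{-1}\log|f(N)|$. Since $h_F(\pm\pi/2)\le0$, this gives $h_F(0)\le0$, and the remaining steps are standard. The Phragm\'en--Lindel\"of step is even redundant, because for a Laplace transform $F(z)=\int_0^\ell e^{zr}G(r)\dd r$ one already has $h_F(0)=\max\operatorname{supp}G$.

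The paper takes a different, entirely elementary route:
\begin{itemize}
  \item It substitutes $x=e^r$, so that $F(N)=c_{N-1}$ are the moments of $G(\log x)\dd x$ on $[1,e^\ell]$.
  \item The bound $c_k=O(1/k)$ makes $\sum_k c_k\zeta^{-k-1}$ converge for $|\zeta|>1$.
  \item This series is the Laurent expansion at infinity of the Cauchy transform $\int_1^{e^\ell}G(\log x)(\zeta-x)^{-1}\dd x$. The transform therefore continues holomorphically across the slit $(1,e^\ell)$, and the continuation is real there.
  \item Consequently the boundary value $-\pi G(\log\zeta)$ of its imaginary part must vanish.
\end{itemize}
This generating-function identity is exactly the Borel--P\'olya mechanism by which the strong Carlson theorem is proved in general. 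The paper can carry it out in a few lines because $F$ is already presented as a Laplace transform over $[0,\ell]$. No indicator theory, Phragm\'en--Lindel\"of, or Paley--Wiener argument is needed, and both proofs use only $\limsup_N N^{-1}\log|F(N)|\le0$. Your route buys conceptual placement and some generality, for instance complex-valued $G$ with no change, whereas the paper uses realness of the $c_k$. The cost is a nontrivial citation.

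Two caveats on your fallback remarks.

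First, your self-contained sketch of the Cartwright step would not work as written. The interpolant $S(z)=\sin(\pi z)\sum_N(-1)^NF(N)/(\pi(z-N))$ has indicator $\pi$ at $\pm\pi/2$. So $F-S$ satisfies $h(\pi/2)+h(-\pi/2)=2\pi$, which is precisely the borderline where Carlson's uniqueness theorem fails ($\sin\pi z$ itself vanishes at every integer). The subtraction destroys the imaginary-axis margin that $F$ had.

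Second, in your moment alternative the substitution $y=e^{r-\ell}$ is essentially the paper's, but M\"untz-type density is not the relevant tool. All monomials are present, and the hypothesis is decay of the moments, not vanishing. What is needed is analytic continuation of the moment generating function, which is exactly the paper's argument.
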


\begin{proof}
We give the moment argument explicitly.
Set \(R=e^\ell\) and use the substitution \(x=e^r\).
For \(k\ge0\), define
\[
  c_k=\int_1^R x^kG(\log x)\dd x.
\]
Because \(\dd x=x\dd r\), the change of variables gives
\[
  c_{N-1}=\int_0^\ell e^{Nr}G(r)\dd r.
\]
Thus \eqref{eq:discrete-moment-assumption} says \(c_k=O((k+1)^{-1})\) for all sufficiently large \(k\).
The finitely many remaining coefficients do not affect convergence, so
\[
  H(z)=\sum_{k=0}^\infty c_kz^k
\]
is holomorphic in the unit disk.

If \(|z|<R^{-1}\), the geometric series converges uniformly for \(1\le x\le R\), and termwise integration gives
\[
  H(z)=\int_1^R\frac{G(\log x)}{1-zx}\dd x.
\]
Now introduce the Cauchy transform
\[
  \mathscr C(\zeta)=\int_1^R\frac{G(\log x)}{\zeta-x}\dd x.
\]
For \(|\zeta|>R\), the preceding identity with \(z=1/\zeta\) becomes
\[
  \mathscr C(\zeta)=\frac1\zeta H(1/\zeta).
\]
The left side is holomorphic on \(\mathbb C\setminus[1,R]\), while the right side is holomorphic for \(|\zeta|>1\).
The set \(\{\zeta:|\zeta|>1\}\setminus[1,R]\) is connected, so the identity theorem shows that the two formulas agree throughout that set.
The right side therefore supplies a holomorphic continuation of \(\mathscr C\) through every interior point of the interval \((1,R)\).

All coefficients \(c_k\) are real.
Hence \(\zeta^{-1}H(\zeta^{-1})\) is real for real \(\zeta\in(1,R)\).
On the other hand, approaching such a point from the upper half-plane gives
\[
  \lim_{\eta\downarrow0}\Im \mathscr C(\zeta+i\eta)
  =-\lim_{\eta\downarrow0}
  \int_1^R\frac{\eta G(\log x)}{(\zeta-x)^2+\eta^2}\dd x
  =-\pi G(\log\zeta),
\]
where the last limit is the usual Poisson-kernel approximation to the identity.
The holomorphic continuation has zero imaginary boundary value, so \(G(\log\zeta)=0\) on \((1,R)\).
Continuity gives \(G=0\) on \([0,\ell]\).
\end{proof}

\begin{lemma}\label{lem:discrete-weyl}
Let $S>0$ and $V_1,V_2\in C^\infty([0,S];\R)$.
For $j=1,2$ and $N>0$, let $v_j$ be any nonzero solution satisfying
\[
  -v_j''+V_jv_j=-N^2v_j,
  \qquad v_j(S)=0,
\]
and set $\mathfrak m_j(N)=-v_j'(0)/v_j(0)$.
If $\mathfrak m_1(N)=\mathfrak m_2(N)$ for all sufficiently large integers $N$, then $V_1=V_2$ on $[0,S]$.
\end{lemma}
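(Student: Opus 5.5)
The plan is to combine the $A$-amplitude representation of Lemma~\ref{lem:a-amplitude} with the discrete moment uniqueness of Lemma~\ref{lem:discrete-exponential-moments}, and then argue by a continuity (supremum) argument in the depth variable.

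Let $\mathfrak a_j=\mathfrak a_{V_j}$ be the amplitudes given by Lemma~\ref{lem:a-amplitude}. Set
\[
  a^*=\sup\{a\in[0,S):\mathfrak a_1=\mathfrak a_2\text{ on }[0,a]\},
\]
with the convention $a^*=0$ if no such $a>0$ exists. The aim is to show $a^*=S$. Once that holds, the local correspondence in Lemma~\ref{lem:a-amplitude} gives $V_1=V_2$ on $[0,a]$ for every $a<S$, and continuity of the $V_j$ gives equality on all of $[0,S]$.

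Suppose instead that $a^*<S$, and choose $b$ with $a^*<b<S$ and $b-a^*$ small. Subtract the two expansions \eqref{eq:80} at $\kappa=N$. For large integers $N$ the left-hand sides agree by hypothesis, and the amplitudes agree on $[0,a^*]$ by continuity. This leaves
\[
  \int_{a^*}^{b}\bigl(\mathfrak a_1-\mathfrak a_2\bigr)(\alpha)\,e^{-2\alpha N}\dd\alpha
  =O\bigl(e^{-(2b-\varepsilon)N}\bigr).
\]
Now substitute $\alpha=b-r/2$ with $r\in[0,\ell]$, where $\ell=2(b-a^*)$, and put $G(r)=\tfrac12(\mathfrak a_1-\mathfrak a_2)(b-r/2)$. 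Multiplying through by $e^{2bN}$ turns the left side into $\int_0^\ell e^{Nr}G(r)\dd r$ and bounds it by $O(e^{\varepsilon N})$.

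This bound is the main obstacle. Lemma~\ref{lem:discrete-exponential-moments} needs the integral to be $O(N^{-1})$, and $O(e^{\varepsilon N})$ is weaker. To close the gap, I would not work on $[a^*,b]$ directly but on a shorter interval $[a^*,b']$ with $b'<b$. Keep the error exponent $2b-\varepsilon$, taking $\varepsilon<2(b-b')$. Then split the integral over $[a^*,b]$ into the piece over $[a^*,b']$ and the piece over $[b',b]$.

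The piece over $[b',b]$ is only $O(e^{-2b'N})$, which is too large to discard. So simply truncating the target interval does not work. The correct normalization is instead to multiply by $e^{2(b-\varepsilon/2)N}$ and substitute relative to the point $b-\varepsilon/2$:
\[
  \int_0^{\ell'} e^{Nr}\widetilde G(r)\dd r
  +\int_{b-\varepsilon/2}^{b}(\cdots)\,e^{-2(\alpha-b+\varepsilon/2)N}\dd\alpha
  =O(1).
\]
The second integral is $O(1)$ because its exponential factor is at most $1$; in fact it is $O(N^{-1})$ when the amplitude difference is bounded. To reach the $O(N^{-1})$ rate on the first integral, I would use the slack in $\varepsilon$: choose the error exponent $2b-\varepsilon$ but normalize at $b-\varepsilon$. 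Then the right-hand error becomes $O(e^{-\varepsilon N})$, and the middle strip $[b-\varepsilon,b]$ contributes $O(N^{-1})$ by direct integration of a bounded function.

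With that normalization, Lemma~\ref{lem:discrete-exponential-moments} applies on $[a^*,b-\varepsilon]$ and forces $\mathfrak a_1=\mathfrak a_2$ there. Since $\varepsilon$ can be taken small enough that $b-\varepsilon>a^*$, this contradicts the maximality of $a^*$. Finally, apply the converse direction of Lemma~\ref{lem:a-amplitude} to conclude $V_1=V_2$.
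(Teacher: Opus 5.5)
Your final argument is correct and is essentially the paper's proof. The paper fixes $a<S$, picks $b,\varepsilon$ with $2a<2b-\varepsilon$, bounds the strip $[a,b]$ by $O(N^{-1}e^{-2Na})$, normalizes at $a$, and applies Lemma~\ref{lem:discrete-exponential-moments}; this is exactly your normalization at $b-\varepsilon$.

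Two points to tidy. First, the supremum argument over $a^*$ is unnecessary: the same estimate with $a^*=0$ already gives $\mathfrak a_1=\mathfrak a_2$ on $[0,b-\varepsilon]$ for arbitrary $b<S$. Second, your intermediate claim that truncating at $b'<b$ ``does not work'' is mistaken. Once you normalize at $b'$, the $O(N^{-1}e^{-2b'N})$ contribution of $[b',b]$ becomes the required $O(N^{-1})$, which is precisely what your final normalization does.
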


\begin{proof}
Apply Lemma~\ref{lem:a-amplitude} to the two potentials and write
\[
  \delta\mathfrak a=\mathfrak a_{V_1}-\mathfrak a_{V_2}.
\]
Fix \(0<a<S\).
Choose \(b\) and \(\varepsilon\) so that
\[
  a<b<S,\qquad 2a<2b-\varepsilon.
\]
Subtracting the two formulas \eqref{eq:80} at \(\kappa=N\) and using the assumed equality of the Weyl samples gives
\[
  \int_0^be^{-2N\alpha}\delta\mathfrak a(\alpha)\dd\alpha
  =O(e^{-(2b-\varepsilon)N}).
\]
Since \(\delta\mathfrak a\) is bounded on \([0,b]\),
\[
  N\left|\int_a^be^{-2N\alpha}\delta\mathfrak a(\alpha)\dd\alpha\right|
  \le\frac12\|\delta\mathfrak a\|_{L^\infty(a,b)}e^{-2Na},
\]
while the choice \(2a<2b-\varepsilon\) implies
\(Ne^{-(2b-\varepsilon)N}=O(e^{-2Na})\).
Hence
\begin{equation}
  N\int_0^ae^{-2N\alpha}\delta\mathfrak a(\alpha)\dd\alpha=O(e^{-2Na}).
  \label{eq:81}
\end{equation}
With $r=2(a-\alpha)$ and $G(r)=\delta\mathfrak a(a-r/2)$, this becomes
\begin{equation}
  \sup_{N\ge N_0}N\left|\int_0^{2a}e^{Nr}G(r)\dd r\right|<\infty.
  \label{eq:82}
\end{equation}
Lemma~\ref{lem:discrete-exponential-moments} applied with interval length \(2a\) gives \(G=0\), and therefore \(\mathfrak a_{V_1}=\mathfrak a_{V_2}\) on \([0,a]\).
The local correspondence in Lemma~\ref{lem:a-amplitude} now gives \(V_1=V_2\) on \([0,a]\).
Because \(a<S\) was arbitrary, the potentials agree on \([0,S)\); continuity gives equality at \(S\) as well.
\end{proof}

\begin{lemma}\label{lem:directional-liouville-factor}
For every $w\in\Z^m\setminus\{0\}$, one has $V_{1,w}=V_{2,w}$ and $\mu_{1,w}=\mu_{2,w}$ on the common interval $[0,S_w]$.
\end{lemma}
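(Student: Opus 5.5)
The plan is to combine the directional Weyl data of Lemma~\ref{lem:directional-weyl-data} with the discrete Borg--Marchenko statement of Lemma~\ref{lem:discrete-weyl}, and then to recover the Liouville factor from an ODE.

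\emph{Step 1 (common interval).} Fix $w\in\Z^m\setminus\{0\}$. By \eqref{eq:79} the two Liouville intervals coincide, $S_{1,w}=S_{2,w}=S_w$. Hence both potentials $V_{j,w}=\mu_{j,w}''/\mu_{j,w}$ live on the same interval $[0,S_w]$.

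\emph{Step 2 (regularity of the potentials).} I will check that $V_{j,w}\in C^\infty([0,S_w];\R)$. The function $q_{j,w}$ is smooth and positive on $[0,L]$ because $h_j\in C^\infty([0,L];\Sym_m^+)$. Therefore $t\mapsto s_{j,w}(t)$ is a smooth diffeomorphism onto $[0,S_w]$ with smooth inverse. The factor $\mu_{j,w}=(\omega_j^2q_{j,w})^{1/4}$, composed with that inverse, is then smooth and positive, so $V_{j,w}$ is smooth up to both endpoints.

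\emph{Step 3 (Weyl quotients).} The Weyl quotient $\mathfrak m_{j,w}(N)$ of \eqref{eq:76} is exactly the quotient $\mathfrak m_j(N)$ of Lemma~\ref{lem:discrete-weyl}. It is built from the solution of $-v''+V_{j,w}v=-N^2v$ with $v(S_w)=0$, and it is well defined by the nonresonance argument \eqref{eq:weyl-nonresonance}. Equality \eqref{eq:76} holds for every positive integer $N$. Lemma~\ref{lem:discrete-weyl} therefore gives $V_{1,w}=V_{2,w}$ on $[0,S_w]$.

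\emph{Step 4 (Liouville factors).} Both $\mu_{j,w}$ solve the same linear second-order equation $\mu''=V_w\mu$ on $[0,S_w]$, where $V_w$ is the common potential. By \eqref{eq:74}, together with positivity of $\mu_{j,w}$, one has $\mu_{1,w}(0)=\mu_{2,w}(0)$. Combining this with \eqref{eq:75} gives $\mu_{1,w}'(0)=\mu_{2,w}'(0)$. Uniqueness for the linear initial value problem with continuous coefficient then yields $\mu_{1,w}=\mu_{2,w}$ on $[0,S_w]$.

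\emph{Main obstacle.} The only delicate point is making sure the hypotheses of Lemma~\ref{lem:discrete-weyl} are met exactly, not merely asymptotically. This requires the intervals to coincide before the potentials are compared, which is why \eqref{eq:79} is used first. It also requires the sign convention for $\mathfrak m$ to match the one in Lemma~\ref{lem:a-amplitude}. Since both conditions are already arranged by the preceding lemma, the proof should be short.
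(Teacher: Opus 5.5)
Your proposal is correct and follows essentially the same route as the paper: apply Lemma~\ref{lem:discrete-weyl} using \eqref{eq:76} and \eqref{eq:79}, then recover $\mu_{1,w}=\mu_{2,w}$ from $\mu''=V_w\mu$ with the common initial data \eqref{eq:74}--\eqref{eq:75} and positivity of the Liouville factors. Your Step~2 checks that $V_{j,w}$ is smooth up to both endpoints, which is a hypothesis of Lemma~\ref{lem:discrete-weyl} that the paper leaves implicit.
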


\begin{proof}
Apply Lemma~\ref{lem:discrete-weyl} with \eqref{eq:76} and \eqref{eq:79}.
We obtain
\[
  V_{1,w}(s)=V_{2,w}(s)\qquad(0\le s\le S_w).
\]
Since $\mu_{j,w}''=V_{j,w}\mu_{j,w}$, the common initial data \eqref{eq:74}--\eqref{eq:75} and ODE uniqueness imply
\begin{equation}
  \mu_{1,w}(s)=\mu_{2,w}(s)=:\mu_w(s)
  \qquad(0\le s\le S_w).
  \label{eq:83}
\end{equation}
Here the positive root of the common number $\mu_{j,w}(0)^2$ is used; $\mu_{j,w}>0$ by its definition in \eqref{eq:67}.
\end{proof}

The inverse problem has now been solved for each lattice direction, but the corresponding Liouville coordinates may differ.
The next lemma introduces a common variable and recovers the full matrix by polarization.
The point to keep in mind is that \(\mu_w\) is known as a function of \(s_w\), while two different directions generally use two different functions \(s_w(t)\).
One must first put all recovered functions on a single axis; comparing them at the same numerical value of \(s\) would have no geometric meaning.

\begin{lemma}\label{lem:liouville-synchronization}
The identities in Lemma~\ref{lem:directional-liouville-factor} imply $h_1(t)=h_2(t)$ for every $0\le t\le L$ in the prescribed normal coordinate.
\end{lemma}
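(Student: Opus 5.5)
The plan is to put all directional data on one axis that does not depend on $w$, namely the coordinate $\tau$ defined through the slice density, and then undo the Liouville transform. By \eqref{eq:66}--\eqref{eq:67}, for each $j$ and $w$ we have $\dd s_{j,w}/\dd t=\sqrt{q_{j,w}}$ and $\mu_{j,w}(s_{j,w}(t))^2=\omega_j(t)\sqrt{q_{j,w}(t)}$. Dividing the first identity by the second gives
\[
  \frac{\dd s_{j,w}}{\mu_{j,w}(s_{j,w})^2}=\frac{\dd t}{\omega_j(t)}.
\]
The left side involves only the recovered function $\mu_w$ of Lemma~\ref{lem:directional-liouville-factor}. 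The right side involves no direction at all.

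\emph{Step 1: a common variable.} For each $w\in\mathcal W$ I set $T_w(s)=\int_0^s\mu_w(\sigma)^{-2}\dd\sigma$ on $[0,S_w]$. This function is strictly increasing and is the same for $j=1,2$. I also define $\tau_j(t)=\int_0^t\omega_j(r)^{-1}\dd r$. The displayed identity integrates to
\[
  \tau_j(t)=T_w(s_{j,w}(t))\qquad\text{for every }w\in\mathcal W\text{ and }j=1,2.
\]
Taking $t=L$ gives $\tau_1(L)=T_w(S_w)=\tau_2(L)=:\tau_L$, using the common lengths \eqref{eq:79}. Thus both $\tau_1$ and $\tau_2$ are increasing bijections $[0,L]\to[0,\tau_L]$. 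Moreover $s_{j,w}=T_w^{-1}\circ\tau_j$, where $T_w^{-1}$ does not depend on $j$.

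\emph{Step 2: recover the tensor density in $\tau$.} I write $\widehat h_j(\tau)=h_j(\tau_j^{-1}(\tau))$ and similarly $\widehat\omega_j$ and $\widehat q_{j,w}$. Then
\[
  \widehat\omega_j(\tau)^2\,\widehat q_{j,w}(\tau)=\mu_w\bigl(T_w^{-1}(\tau)\bigr)^4.
\]
The right side is independent of $j$. Hence $w^T\bigl(\widehat\omega_j^2\widehat h_j^{-1}\bigr)w$ agrees for $j=1,2$ for every $w\in\mathcal W$. Polarization over $\mathcal W$ then gives equality of the matrices
\[
  K_j(\tau)=(\det\widehat h_j)\,\widehat h_j^{-1}.
\]
As in the proof of Proposition~\ref{prop:conductivity-formulation}, I use $\det K_j=(\det\widehat h_j)^{m-1}$ with $m\ge2$. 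This recovers $\det\widehat h_j$ and then $\widehat h_j$ itself, so $\widehat h_1=\widehat h_2$ and $\widehat\omega_1=\widehat\omega_2$ on $[0,\tau_L]$.

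\emph{Step 3: return to $t$.} Since $\dd t/\dd\tau=\widehat\omega_j(\tau)$ and $t=0$ at $\tau=0$, the inverse maps $\tau_j^{-1}(\tau)=\int_0^\tau\widehat\omega_j$ coincide. Hence $\tau_1=\tau_2$, and therefore $h_1(t)=\widehat h_1(\tau_1(t))=\widehat h_2(\tau_2(t))=h_2(t)$ for all $t\in[0,L]$.

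The main obstacle is Step 1. One must find a reparametrization that is simultaneously expressible through the recovered $\mu_w$ for every direction and is also common to both metrics. The point to verify carefully is that $\mu^{-2}\dd s=\omega^{-1}\dd t$ holds exactly under the normalization \eqref{eq:67}, so that $T_w\circ s_{j,w}$ is independent of $w$. Once that holds, the remaining steps are algebra and polarization. The requirement $m\ge2$ enters only in the determinant inversion in Step 2.
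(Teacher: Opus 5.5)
Your proposal is correct and follows the paper's proof essentially step for step. The paper likewise sets $\tau_j(t)=\int_0^t\omega_j^{-1}$ and uses $\dd\tau_j/\dd s_{j,w}=\mu_{j,w}^{-2}$ from \eqref{eq:66}--\eqref{eq:67} to express $\tau_j$ through the common $\mu_w$ with a $w$- and $j$-independent endpoint. It then recovers $(\det h)h^{-1}$ by polarization over $\mathcal W$, inverts it via $\det(\operatorname{cof}h)=(\det h)^{m-1}$, and returns to $t$ by integrating $\dd t_j/\dd\tau=\omega(\tau)$ from $t_j(0)=0$.
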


\begin{proof}
\noindent\emph{Step 1: construct a direction-independent coordinate.}
It remains to remove the Liouville reparametrizations, which depend on the lattice direction.
Define
\[
  \tau_j(t)=\int_0^t\omega_j(r)^{-1}\dd r.
\]
Since \(\omega_j>0\), each \(\tau_j\) is strictly increasing; denote its inverse by \(t_j(\tau)\).
Equations \eqref{eq:66}--\eqref{eq:67} give the exact identity
\begin{equation}
  \frac{\dd\tau_j}{\dd s_{j,w}}=\mu_{j,w}^{-2}.
  \label{eq:84}
\end{equation}
Here the derivative is taken along the curve \(s=s_{j,w}(t)\).
By \eqref{eq:83}, the right-hand side is common.
Therefore
\[
  \tau_w(s)=\int_0^s\mu_w(r)^{-2}\dd r
\]
equals $\tau_j(t)$ when $s=s_{j,w}(t)$.
Its endpoint value $\mathcal T=\tau_w(S_w)$ is independent of $w$ for either fixed metric and, by the preceding equality, is the same for the two metrics.
Because \(\mu_w>0\), the function \(\tau_w(s)\) is strictly increasing and has an inverse, which we denote by \(s_w(\tau)\).
Inverting \(\tau_w\) expresses all the recovered functions in the common variable $0\le\tau\le\mathcal T$.
Thus the same value of \(\tau\), unlike the same numerical value of \(s_w\), refers to the same normal location for every direction.

\medskip
\noindent\emph{Step 2: reconstruct the matrix from directional quadratic forms.}
For a positive definite matrix \(h\), write
\[
  \operatorname{cof}(h)=(\det h)h^{-1}.
\]
At corresponding values of \(\tau\), equation \eqref{eq:67} gives
\begin{equation}
  \mu_w(s_w(\tau))^4
  =w^T\operatorname{cof}(h_j(t_j(\tau)))w.
  \label{eq:85}
\end{equation}
Indeed, the middle expression is
\[
  \omega_j(t_j(\tau))^2q_{j,w}(t_j(\tau))
  =(\det h_j(t_j(\tau)))\,
   w^Th_j(t_j(\tau))^{-1}w.
\]
The directions \(w=e_a\) give the diagonal entries of the cofactor matrix, while \(w=e_a+e_b\), together with the two diagonal values, gives twice its \(ab\)-entry.
Thus polarization gives
\[
  \operatorname{cof}(h_1(t_1(\tau)))
  =\operatorname{cof}(h_2(t_2(\tau))).
\]
For every positive definite \(m\times m\) matrix \(h\), with \(m\ge2\),
\begin{equation}
  \det(\operatorname{cof}h)=(\det h)^{m-1},
  \qquad
  h=\bigl(\det(\operatorname{cof}h)\bigr)^{1/(m-1)}
  (\operatorname{cof}h)^{-1}.
  \label{eq:86}
\end{equation}
Therefore
\[
  h_1(t_1(\tau))=h_2(t_2(\tau)),
  \qquad
  \omega_1(t_1(\tau))=\omega_2(t_2(\tau))=:\omega(\tau).
\]

\medskip
\noindent\emph{Step 3: return to the prescribed coordinate \(t\).}
Since $t_j$ is the inverse of $\tau_j$, one has
\[
  \frac{\dd t_j}{\dd\tau}=\omega(\tau),
  \qquad t_j(0)=0.
\]
Uniqueness for this scalar ODE gives $t_1(\tau)=t_2(\tau)$.
Both inverses start at \(t=0\), so no additive constant remains.
Their common endpoint is \(L\), and the equality as functions of \(\tau\) therefore translates directly into
\(h_1(t)=h_2(t)\) for every \(t\in[0,L]\).
\end{proof}

The recovery of the coordinate $t$ turns the inverse spectral result modulo reparametrization into equality of the coefficients in the prescribed coordinates.
As in Theorem~\ref{thm:normal-quasianalytic}, any boundary-fixing isometry must preserve the distance coordinate $t$ and the initial point $y$ of each vertical normal geodesic.
The fixed global normal gauge therefore removes the residual diffeomorphism freedom.

\begin{proof}[Proof of Theorem~\ref{thm:tangentially-homogeneous}]
Tangential translation invariance preserves every Fourier mode.
In the common Fourier basis, each Dirichlet-to-Neumann map is block diagonal, with one $2\times2$ endpoint block for each $k\in\Z^m$.
By Lemma~\ref{lem:fourier-density-flux}, equality of the full maps therefore gives the same weighted endpoint flux matrix for every lattice mode.
For every direction in the finite polarization set, Lemma~\ref{lem:directional-weyl-data} converts the matrices along its integer multiples into identical Weyl samples and identical Liouville interval lengths.
The discrete Borg--Marchenko argument in Lemma~\ref{lem:discrete-weyl} then recovers the corresponding smooth one-dimensional potential, and Lemma~\ref{lem:directional-liouville-factor} recovers its Liouville factor.

These directional recoveries initially use direction-dependent Liouville coordinates.
Lemma~\ref{lem:liouville-synchronization} places them in a common variable, uses polarization to recover $(\det h)h^{-1}$, and hence recovers $h$ because $m\ge2$.
The same lemma finally compares the Liouville parametrizations and proves that the common reparametrization is the prescribed normal coordinate $t$.
Consequently $h_1(t)=h_2(t)$ for all $t\in[0,L]$, as claimed.
\end{proof}

\section{Smooth partial-data rigidity under Loewner ordering}\label{sec:proof-loewner}

The proof of Theorem~\ref{thm:loewner} has three distinct stages.
First, the conductivity monotonicity inequality converts equality of the partial Dirichlet-to-Neumann maps into sign constraints for interior energies.
Second, partial-boundary Runge approximation combines with local gradient probes to produce global solutions, with boundary traces supported in $\Gamma$, whose energy is large near a prescribed exterior point and negligible in the unrecovered core.
These localized potentials contradict the monotonicity inequalities unless a one-sided contrast vanishes on that exterior layer.
Third, the foliation in \eqref{eq:loewner-foliation} propagates this recovery continuously from the boundary to the innermost level set.
We give each stage separately; compare the general and quantitative Runge frameworks in~\cite{Browder1962,RulandSalo2019}, the localized-potential arguments in~\cite{Gebauer2008,HarrachUllrich2017}, and their anisotropic form in~\cite{GardeJohanssonZacharopoulos2025}.

\subsection{Monotonicity and partial-boundary Runge approximation}

\begin{lemma}\label{lem:loewner-monotonicity}
Let $f\in H_{00}^{1/2}(\Gamma)$ and write $u_j^f=u_{\gamma_j}^f$ for the $\gamma_j$-harmonic solution with trace $f$.
With $A=\gamma_1-\gamma_2$, one has
\begin{equation}
  \int_\Omega A\nabla u_1^f\cdot\nabla u_1^f\dd x
  \le
  \langle(\Lambda_{\gamma_1}^\Gamma-\Lambda_{\gamma_2}^\Gamma)f,f\rangle
  \le
  \int_\Omega A\nabla u_2^f\cdot\nabla u_2^f\dd x.
  \label{eq:loewner-monotonicity}
\end{equation}
\end{lemma}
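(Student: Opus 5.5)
The plan is to prove \eqref{eq:loewner-monotonicity} by the standard energy (Dirichlet-principle) comparison, working entirely with the bilinear form \eqref{eq:loewner-local-form}. The first step is to rewrite the partial DN form as an energy. Since $u_j^f$ is $\gamma_j$-harmonic, the right-hand side of \eqref{eq:loewner-local-form} does not depend on the extension $\widetilde h$. Choosing $\widetilde h=u_j^f$ when $h=f$ gives
\[
  \langle\Lambda_{\gamma_j}^\Gamma f,f\rangle=\int_\Omega\gamma_j\nabla u_j^f\cdot\nabla u_j^f\dd x .
\]
Choosing instead $\widetilde h=u_k^f$ for $k\ne j$ gives the mixed identity
\[
  \langle\Lambda_{\gamma_j}^\Gamma f,f\rangle=\int_\Omega\gamma_j\nabla u_j^f\cdot\nabla u_k^f\dd x .
\]
Here $u_k^f\in H^1(\Omega)$ has the same trace $f$, so it is an admissible extension.

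Next I expand a nonnegative quadratic term. Write $w=u_1^f-u_2^f\in H_0^1(\Omega)$. Since $\gamma_2$ is symmetric and uniformly positive,
\[
  0\le\int_\Omega\gamma_2\nabla w\cdot\nabla w\dd x
  =\int_\Omega\gamma_2\nabla u_1^f\cdot\nabla u_1^f
  -2\int_\Omega\gamma_2\nabla u_2^f\cdot\nabla u_1^f
  +\int_\Omega\gamma_2\nabla u_2^f\cdot\nabla u_2^f .
\]
By the mixed identity with $j=2$, the middle integral equals $2\langle\Lambda_{\gamma_2}^\Gamma f,f\rangle$. The last integral equals $\langle\Lambda_{\gamma_2}^\Gamma f,f\rangle$. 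Hence
\[
  \langle\Lambda_{\gamma_2}^\Gamma f,f\rangle\le\int_\Omega\gamma_2\nabla u_1^f\cdot\nabla u_1^f\dd x .
\]
Subtracting this from $\langle\Lambda_{\gamma_1}^\Gamma f,f\rangle=\int_\Omega\gamma_1\nabla u_1^f\cdot\nabla u_1^f$ gives the left inequality in \eqref{eq:loewner-monotonicity}.

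For the right inequality I apply the same expansion with the roles of the two conductivities swapped, now using $0\le\int_\Omega\gamma_1\nabla w\cdot\nabla w$. This gives $\langle\Lambda_{\gamma_1}^\Gamma f,f\rangle\le\int_\Omega\gamma_1\nabla u_2^f\cdot\nabla u_2^f$. Subtracting $\langle\Lambda_{\gamma_2}^\Gamma f,f\rangle=\int_\Omega\gamma_2\nabla u_2^f\cdot\nabla u_2^f$ then yields $\langle(\Lambda_{\gamma_1}^\Gamma-\Lambda_{\gamma_2}^\Gamma)f,f\rangle\le\int_\Omega A\nabla u_2^f\cdot\nabla u_2^f$.

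There is no real obstacle here. The only points needing care are functional-analytic:
\begin{itemize}
  \item the weak solutions $u_j^f\in H^1(\Omega)$ exist for $f\in H_{00}^{1/2}(\Gamma)$ (via the zero extension into $H^{1/2}(\partial\Omega)$ and Lax--Milgram);
  \item \eqref{eq:loewner-local-form} is independent of the extension, because changing the extension changes the integral by $\int\gamma_j\nabla u_j^f\cdot\nabla\phi$ with $\phi\in H_0^1(\Omega)$, which vanishes by the weak equation;
  \item symmetry of $\gamma_j$ is what allows the cross term $\gamma_2\nabla u_2^f\cdot\nabla u_1^f$ to be identified with the mixed identity.
\end{itemize}
Equality of the partial DN maps is not used in this lemma. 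It enters only later, when the middle term of \eqref{eq:loewner-monotonicity} is set to zero.
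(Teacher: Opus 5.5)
Your proof is correct and follows essentially the same route as the paper: both identify the middle term with $E_1(u_1^f)-E_2(u_2^f)$, where $E_j(v)=\int_\Omega\gamma_j\nabla v\cdot\nabla v$, and use that $u_j^f$ minimizes $E_j$ among functions with trace $f$. The only difference is that you prove this minimization directly, by expanding $\int_\Omega\gamma_j\nabla w\cdot\nabla w\ge0$ with your mixed identity, whereas the paper simply cites the Dirichlet principle.
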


\begin{proof}
For $j=1,2$, put
\[
  E_j(v)=\int_\Omega\gamma_j\nabla v\cdot\nabla v\dd x.
\]
The solution $u_j^f$ minimizes $E_j$ among functions with trace $f$.
Consequently,
\begin{align*}
  E_1(u_1^f)-E_2(u_2^f)
  &=E_2(u_1^f)-E_2(u_2^f)
    +\int_\Omega A\nabla u_1^f\cdot\nabla u_1^f\dd x,\\
  E_1(u_1^f)-E_2(u_2^f)
  &=E_1(u_1^f)-E_1(u_2^f)
    +\int_\Omega A\nabla u_2^f\cdot\nabla u_2^f\dd x.
\end{align*}
The first energy difference on the first line is nonnegative, whereas the first energy difference on the second line is nonpositive.
Taking the solution itself as an extension of its boundary value in \eqref{eq:loewner-local-form} gives
\[
  \langle\Lambda_{\gamma_j}^\Gamma f,f\rangle=E_j(u_j^f).
\]
Thus the middle term in \eqref{eq:loewner-monotonicity} equals $E_1(u_1^f)-E_2(u_2^f)$, and the two displayed decompositions give the asserted inequalities.
\end{proof}

\begin{lemma}\label{lem:loewner-runge}
Let $\gamma\in C^\infty(\overline\Omega;\Sym_n^+)$ be uniformly positive, and let $U\Subset\Omega$ be a smooth, possibly disconnected, open set such that $\Omega\setminus\overline U$ is connected.
Restrictions to $U$ of $\gamma$-harmonic functions whose boundary traces lie in $H_{00}^{1/2}(\Gamma)$ are dense in
\[
  \mathcal H_\gamma(U)
  =\{v\in H^1(U):-\operatorname{div}(\gamma\nabla v)=0\text{ in }U\}
\]
with respect to $L^2(U)$.
On every $U_0\Subset U$, the approximation holds in $H^1(U_0)$.
\end{lemma}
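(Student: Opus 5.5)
The plan is to repeat the duality argument of Lemma~\ref{lem:partial-runge-approximation}, now in divergence form on a Euclidean domain, and then to upgrade $L^2$ density to $H^1$ density by interior elliptic estimates. Let $\mathcal R\subset L^2(U)$ denote the set of restrictions $u_\gamma^f|_U$ with $f\in H_{00}^{1/2}(\Gamma)$. By Hahn--Banach it suffices to show that every $\phi\in L^2(U)$ orthogonal to $\mathcal R$ is also orthogonal to $\mathcal H_\gamma(U)$. Extend $\phi$ by zero and let $w\in H_0^1(\Omega)$ solve $-\operatorname{div}(\gamma\nabla w)=\phi$. Since $\gamma$ is $C^\infty$ up to the boundary and $\partial\Omega$ is smooth, $w\in H^2(\Omega)$. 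Green's identity applied with $u_\gamma^f$ gives
\[
  0=\int_U\phi\,u_\gamma^f\dd x=-\int_{\partial\Omega}f\,(\gamma\nabla w\cdot\nu)\dd S
\]
for every $f\in C_c^\infty(\Gamma)$. Hence the conormal derivative of $w$ vanishes on $\Gamma$, and $w$ already vanishes there.

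Next, $w$ solves the homogeneous equation in $\Omega\setminus\overline U$, with zero Cauchy data on the open set $\Gamma$. I will flatten near a point of $\Gamma$ and extend $w$ by zero across it, which yields a weak solution with Lipschitz coefficients, exactly as in Lemma~\ref{lem:common-collar}. Weak unique continuation for smooth anisotropic divergence-form operators, together with the connectedness of $\Omega\setminus\overline U$, then gives $w=0$ there. Because $w\in H^2$ and $\partial U$ is smooth, the traces of $w$ and $\gamma\nabla w\cdot\nu$ on $\partial U$ from the inside vanish.

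For $v\in\mathcal H_\gamma(U)$, I then want $\int_U\phi v=\int_U -\operatorname{div}(\gamma\nabla w)\,v=0$. The main technical point is justifying Green's identity when $v$ is only $H^1(U)$, so that the conormal trace of $v$ is merely in $H^{-1/2}(\partial U)$. I would handle this by noting $w\in H^2(U)\cap H_0^1(U)$ with zero conormal trace, and approximating $w$ in $H^2(U)$ by functions in $C_c^\infty(U)$; the closure of $C_c^\infty(U)$ in $H^2(U)$ is $H_0^2(U)$, which is exactly the set where both traces vanish. For $\psi\in C_c^\infty(U)$ one has
\[
  \int_U(-\operatorname{div}\gamma\nabla\psi)\,v=\int_U\gamma\nabla\psi\cdot\nabla v=0
\]
by the weak equation for $v$, and passing to the limit gives the claim. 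So the component-by-component vanishing on a possibly disconnected $U$ causes no trouble.

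Finally, given $u_\ell|_U\to v$ in $L^2(U)$, the differences $u_\ell-v$ are $\gamma$-harmonic in $U$. Caccioppoli's inequality on $U_0\Subset U$ bounds $\|\nabla(u_\ell-v)\|_{L^2(U_0)}$ by $C\|u_\ell-v\|_{L^2(U)}$, which gives $H^1(U_0)$ convergence. I expect the unique continuation step from partial Cauchy data and the $H_0^2$ identification to be the only delicate points; everything else mirrors Lemma~\ref{lem:partial-runge-approximation}.
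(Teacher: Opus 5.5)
Your proposal is correct and follows the paper's proof essentially step for step: the same dual problem $w\in H_0^1(\Omega)\cap H^2(\Omega)$, vanishing Cauchy data on $\Gamma$, unique continuation by zero extension across $\Gamma$ and then through the connected set $\Omega\setminus\overline U$, vanishing traces on $\partial U$, Green's formula on $U$, Hahn--Banach, and Caccioppoli. Your $H_0^2(U)$ approximation only makes explicit the final Green step that the paper states in one line; it could equally be done by integrating by parts on $w$ alone, using $w|_U\in H_0^1(U)$ and the zero conormal trace, so the $H^{-1/2}$ conormal trace of $v$ never appears.
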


\begin{proof}
Suppose that $\Phi\in L^2(U)$ annihilates the restrictions of all such global solutions.
Extend $\Phi$ by zero and let $w\in H_0^1(\Omega)$ solve
\begin{equation}
  -\operatorname{div}(\gamma\nabla w)=\Phi
  \quad\text{in }\Omega.
  \label{eq:runge-dual-equation}
\end{equation}
For $f\in H_{00}^{1/2}(\Gamma)$, define the weak conormal trace by
\[
  \langle\partial_\nu^\gamma w,f\rangle_{\partial\Omega}
  =\int_\Omega\gamma\nabla w\cdot\nabla u_\gamma^f\dd x
   -\int_\Omega \Phi u_\gamma^f\dd x.
\]
Since $w\in H_0^1(\Omega)$ and $u_\gamma^f$ is $\gamma$-harmonic, the first integral vanishes.
The annihilation assumption therefore gives
\[
  \langle\partial_\nu^\gamma w,f\rangle_{\partial\Omega}
  =-\int_U\Phi u_\gamma^f\dd x=0.
\]
The source in \eqref{eq:runge-dual-equation} is supported in $\overline U\Subset\Omega$.
Thus $w$ is homogeneous near the boundary, and both its Dirichlet and conormal traces vanish on $\Gamma$.
After extending the coefficients smoothly across a boundary chart, the zero extension of $w$ is a weak homogeneous solution across $\Gamma$.
Interior unique continuation for scalar uniformly elliptic divergence-form equations with Lipschitz principal coefficients~\cite{GarofaloLin1986}, followed through the connected set $\Omega\setminus\overline U$, yields
\begin{equation}
  w=0\qquad\text{in }\Omega\setminus\overline U.
  \label{eq:runge-exterior-vanishing}
\end{equation}
Global $H^2$ regularity for the Dirichlet problem shows that $w\in H^2(\Omega)$.
In particular, its Dirichlet and conormal traces agree when approached from the two sides of $\partial U$.
Equation \eqref{eq:runge-exterior-vanishing} therefore implies that both traces vanish on every component of $\partial U$.
Green's formula on $U$ gives
\[
  \int_U\Phi v\dd x=0\qquad(v\in\mathcal H_\gamma(U)).
\]
The Hahn--Banach theorem proves the $L^2$ density.
If $U_0\Subset U_1\Subset U$, the difference between an approximating function and its harmonic target is harmonic in $U$, and the interior Caccioppoli estimate gives convergence in $H^1(U_0)$.
\end{proof}

\subsection{Localized potentials and recovery of one layer}

The next lemma supplies a local solution that detects a prescribed positive direction of a matrix contrast.
Its proof uses only smooth elliptic stability under rescaling.

\begin{lemma}\label{lem:loewner-local-probe}
Let $\gamma\in C^\infty(\overline\Omega;\Sym_n^+)$ be uniformly positive, let $x_0\in\Omega$, and let $\xi\in\R^n$.
For every $\varepsilon>0$, there are concentric balls $B\Subset B^+\Subset\Omega$ centered at $x_0$ and a function $v\in C^\infty(B^+)$ satisfying
\[
  -\operatorname{div}(\gamma\nabla v)=0\quad\text{in }B^+,
  \qquad
  \|\nabla v-\xi\|_{L^\infty(B)}<\varepsilon.
\]
\end{lemma}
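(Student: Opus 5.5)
The construction is a blow-up (rescaling) argument that compares the variable-coefficient equation with its frozen-coefficient version at $x_0$. Set $\gamma_0=\gamma(x_0)$ and consider the linear function $\ell(x)=\xi\cdot(x-x_0)$. Since $\gamma_0$ is constant, $\operatorname{div}(\gamma_0\nabla\ell)=0$ identically, so $\ell$ solves the frozen equation exactly and $\nabla\ell\equiv\xi$. The plan is to take, for small $r>0$, the solution $v_r\in H^1(B_{2r}(x_0))$ of
\[
  -\operatorname{div}(\gamma\nabla v_r)=0\ \text{in }B_{2r}(x_0),\qquad v_r=\ell\ \text{on }\partial B_{2r}(x_0).
\]
We then set $B^+=B_{2r}(x_0)$ and $B=B_r(x_0)$. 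Interior smoothness of $v_r$ follows from $\gamma\in C^\infty$ and elliptic regularity. The remaining task is to show that $\nabla v_r$ is uniformly close to $\xi$ on $B_r$ once $r$ is small.

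To do this, rescale with $x=x_0+ry$, $y\in B_2=B_2(0)$. Put $\gamma_r(y)=\gamma(x_0+ry)$ and $w_r(y)=r^{-1}(v_r(x_0+ry)-\ell(x_0+ry))$. Then $\nabla_yw_r(y)=\nabla v_r(x)-\xi$, and $w_r\in H^1_0(B_2)$ solves
\[
  -\operatorname{div}_y(\gamma_r\nabla w_r)=\operatorname{div}_y\bigl((\gamma_r-\gamma_0)\xi\bigr)\quad\text{in }B_2 .
\]
The coefficients $\gamma_r$ are uniformly elliptic with the same constants as $\gamma$. All their $y$-derivatives are bounded uniformly in $r\le1$, since each $y$-derivative brings out a factor of $r$. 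Moreover $\|\gamma_r-\gamma_0\|_{C^k(B_2)}=O(r)$ for every $k$, so the source is $O(r)$ in every $C^k(\overline{B_2})$ norm.

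The estimate then proceeds in two steps. First, the energy estimate for the $H^1_0$ problem gives $\|\nabla w_r\|_{L^2(B_2)}\le C\|(\gamma_r-\gamma_0)\xi\|_{L^2(B_2)}=O(r)$. Second, interior higher-order estimates on $B_{3/2}$ have constants independent of $r$, because they depend only on ellipticity and the uniformly bounded $C^k$ norms of $\gamma_r$. These give $\|w_r\|_{H^{k}(B_{3/2})}\le C_k(\|w_r\|_{H^1(B_2)}+\|\text{source}\|_{H^{k}})=O(r)$ for every $k$. Choosing $k>n/2+1$ and applying Sobolev embedding yields $\|\nabla w_r\|_{L^\infty(B_1)}=O(r)$, that is, $\|\nabla v_r-\xi\|_{L^\infty(B_r(x_0))}=O(r)$. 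Taking $r$ small enough that this is below $\varepsilon$, and also $B_{2r}(x_0)\Subset\Omega$, completes the construction.

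The step needing the most care is the uniformity of the interior regularity constants under rescaling. It holds because differentiating $\gamma(x_0+ry)$ only produces positive powers of $r$, so the family $\{\gamma_r\}_{r\le1}$ is bounded in $C^\infty(\overline{B_2})$ with fixed ellipticity. Equivalently, one may apply a Schauder $C^{1,\alpha}$ interior estimate directly, which needs only uniform $C^\alpha$ bounds on the coefficients. Nothing beyond smooth elliptic theory is used, consistent with the remark preceding the lemma.
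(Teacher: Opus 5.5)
Your proof is correct and follows essentially the same route as the paper: rescale to a fixed ball, compare with the affine solution of the frozen-coefficient equation through an energy estimate for the difference, and upgrade to uniform gradient closeness using interior estimates whose constants are uniform in $r$. The differences are cosmetic. You use $H^k$ estimates plus Sobolev embedding, which yields an explicit $O(r)$ rate, where the paper uses interior $W^{2,p}$ estimates with $p>n$; and your balls are $B_r(x_0)\Subset B_{2r}(x_0)$ rather than $B_{r/2}(x_0)\Subset B_r(x_0)$.
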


\begin{proof}
For small $r>0$, write $x=x_0+ry$ and set $\gamma_r(y)=\gamma(x_0+ry)$ on the unit ball.
Let $v_r$ solve
\[
  -\operatorname{div}(\gamma_r\nabla v_r)=0\quad\text{in }B_1,
  \qquad v_r=\xi\cdot y\quad\text{on }\partial B_1.
\]
As $r\to0$, the coefficients $\gamma_r$ converge in every $C^k$ norm to the constant matrix $\gamma(x_0)$, and the affine function $y\mapsto\xi\cdot y$ solves the limiting equation.
Energy estimates for $v_r-\xi\cdot y$, followed by interior $W^{2,p}$ estimates with $p>n$, give
\[
  \nabla v_r\longrightarrow\xi
  \quad\text{uniformly on }B_{1/2}.
\]
After rescaling back and multiplying by $r$, the function
\[
  v(x)=r\,v_r\bigl((x-x_0)/r\bigr)
\]
has the asserted properties on $B=B_{r/2}(x_0)$ and $B^+=B_r(x_0)$.
\end{proof}

We now isolate the localization step.
The disconnected approximation region below has two roles: the local solution is amplified near the point to be tested, while the zero solution is approximated on a neighborhood of the unrecovered core.

\begin{lemma}\label{lem:loewner-localized-potentials}
Let $\gamma\in C^\infty(\overline\Omega;\Sym_n^+)$ be uniformly positive.
Let $D\Subset\Omega$ be smooth with $\Omega\setminus\overline D$ connected, and let $x_0\in\Omega\setminus\overline D$.
Let $B_0^+\Subset\Omega\setminus\overline D$ be a ball centered at $x_0$, and let $v$ be $\gamma$-harmonic on $B_0^+$.
Then there are concentric balls $B\Subset B^+\Subset B_0^+$ centered at $x_0$ and global $\gamma$-harmonic functions $u_m$, with boundary traces in $H_{00}^{1/2}(\Gamma)$, such that
\begin{equation}
  \|u_m-mv\|_{H^1(B)}\le m^{-1},
  \qquad
  \|u_m\|_{H^1(D)}\le m^{-1}.
  \label{eq:localized-potential-estimates}
\end{equation}
\end{lemma}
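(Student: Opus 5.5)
The plan is to apply the partial-boundary Runge approximation of Lemma~\ref{lem:loewner-runge} to a disconnected approximation region $U=B^+\cup D'$, where $B^+$ is a small ball around $x_0$ and $D'$ is a slight smooth enlargement of $D$. The target function on $U$ is $mv$ on $B^+$ and $0$ on $D'$. This target is $\gamma$-harmonic on $U$ because the two pieces have disjoint closures, and it lies in $H^1(U)$.

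The first step is the geometric setup. Since $\overline D$ is compact in the open set $\Omega$ and $\overline{B_0^+}\cap\overline D=\varnothing$, choose a smooth open $D'$ with $\overline D\subset D'\Subset\Omega$ and $\overline{D'}\cap\overline{B_0^+}=\varnothing$. Then choose balls $B\Subset B^+\Subset B_0^+$ centered at $x_0$. Lemma~\ref{lem:loewner-runge} requires $\Omega\setminus\overline U$ to be connected, and verifying this is where I expect the main (though modest) difficulty.

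Removing a small closed ball from the connected open set $\Omega\setminus\overline{D'}$ does not disconnect it, since $n\ge2$ and the ball lies in its interior. The remaining issue is that $\Omega\setminus\overline{D'}$ must itself be connected. I would handle this by taking $D'=\{x:\operatorname{dist}(x,D)<\eta\}$ smoothed, with $\eta$ small. Alternatively, I would use $D'$ equal to $D$ when this is allowed, noting that the lemma needs $H^1(D)$ control only on $D$ itself and that $D$ is already smooth with connected exterior. The cleanest choice is $U=B^+\cup D_1$, where $D_1$ is a small outer parallel set of $D$. For small $\eta$, the exterior of $D_1$ is a deformation retract of the exterior of $\overline D$ minus a thin collar, hence connected. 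The interior estimate then applies on $D\Subset D_1$.

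With the connected exterior secured, Lemma~\ref{lem:loewner-runge} gives, for each $m$, a global $\gamma$-harmonic $u_m$ with trace in $H_{00}^{1/2}(\Gamma)$ approximating the target in $H^1(U_0)$ for any $U_0\Subset U$. Take $U_0=B\cup D''$ with $D\Subset D''\Subset D_1$. Choosing the approximation error below $m^{-1}$ yields both estimates in \eqref{eq:localized-potential-estimates} simultaneously. The approximation is in a single norm on the union, so each component norm is bounded by the total.
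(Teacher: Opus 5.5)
Your proposal is correct and follows the paper's proof essentially step for step: the paper also takes a small outward enlargement $D^+$ of $D$ with connected exterior and a small ball $B^+$ around $x_0$. It then applies Lemma~\ref{lem:loewner-runge} on $U=D^+\cup B^+$ to the target equal to $0$ on $D^+$ and $mv$ on $B^+$, checks connectedness of $\Omega\setminus(\overline{D^+}\cup\overline{B^+})$ by rerouting paths through an annulus, and upgrades to $H^1(D)$ and $H^1(B)$ by interior estimates. Drop your aside about taking $D'=D$, since Lemma~\ref{lem:loewner-runge} only controls $H^1$ on compactly contained subsets; your final choice of the enlargement $D_1$ already handles this.
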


\begin{proof}
Because $D$ is smooth and compactly contained in $\Omega$, it has a tubular neighborhood.
Choose a sufficiently small outward normal enlargement $D^+$ such that
\begin{equation}
  \overline D\subset D^+\Subset\Omega,
  \qquad x_0\notin\overline{D^+},
  \qquad \Omega\setminus\overline{D^+}\text{ is connected}.
  \label{eq:localized-enlargement}
\end{equation}
The enlargement can be chosen by a short ambient tubular flow; for sufficiently short time it preserves the topology of the exterior.

Choose the concentric ball $B^+\Subset B_0^+$ so small that $2B^+$ is compactly contained in $\Omega\setminus\overline{D^+}$, and then choose $B\Subset B^+$.
One has
\begin{equation}
  \Omega\setminus(\overline{D^+}\cup\overline{B^+})
  \quad\text{connected}.
  \label{eq:localized-complement}
\end{equation}
Indeed, any path in the connected exterior of $D^+$ that meets $\overline{B^+}$ can be rerouted inside the annulus $2B^+\setminus\overline{B^+}$; this annulus is connected because $n\ge2$.

Set $U=D^+\cup B^+$ and prescribe on its two components the local $\gamma$-harmonic function
\[
  w_m=0\quad\text{on }D^+,
  \qquad
  w_m=mv\quad\text{on }B^+.
\]
By \eqref{eq:localized-complement}, Lemma~\ref{lem:loewner-runge} applies to $U$.
It gives global solutions with traces supported in $\Gamma$ that approximate $w_m$ in $L^2(U)$ as accurately as desired.
Since the differences are harmonic on each component, interior Caccioppoli estimates upgrade this approximation to $H^1(D)$ and $H^1(B)$.
Choosing the $L^2$ error for the $m$th solution sufficiently small gives \eqref{eq:localized-potential-estimates}.
\end{proof}

\begin{lemma}\label{lem:loewner-exterior}
Let $D\Subset\Omega$ be a smooth open set such that $\Omega\setminus\overline D$ is connected, and assume \eqref{eq:loewner-data}.
If $A\succeq0$ on $\Omega\setminus D$, then
\begin{equation}
  A=0\qquad\text{on }\Omega\setminus D.
  \label{eq:exterior-one-sided-zero}
\end{equation}
The same conclusion holds if $A\preceq0$ on $\Omega\setminus D$.
\end{lemma}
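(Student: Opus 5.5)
We argue by contradiction and treat the case $A\succeq0$ on $\Omega\setminus D$; the case $A\preceq0$ follows by interchanging $\gamma_1$ and $\gamma_2$, which replaces $A$ by $-A$ and leaves \eqref{eq:loewner-data} unchanged. Suppose $A(x_0)\ne0$ for some $x_0\in\Omega\setminus D$. Since $A$ is continuous and $\partial D$ is a smooth hypersurface with $D$ on one side, we may move $x_0$ slightly so that $x_0\in\Omega\setminus\overline D$ while still $A(x_0)\ne0$. Because $A(x_0)\succeq0$ is nonzero, there is $\xi\in\R^n$ with $\xi^TA(x_0)\xi=2c>0$. By continuity, $\xi^TA\xi\ge c$ holds on a small ball around $x_0$, and we may take this ball inside $\Omega\setminus\overline D$.

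Next we build a probe. Lemma~\ref{lem:loewner-local-probe}, applied to $\gamma_1$ with a small $\varepsilon$, gives balls $B'\Subset B_0^+$ centred at $x_0$, with $B_0^+$ inside the above ball, and a $\gamma_1$-harmonic $v$ on $B_0^+$ satisfying $|\nabla v-\xi|<\varepsilon$ on $B'$. Then Lemma~\ref{lem:loewner-localized-potentials}, with $\gamma=\gamma_1$, produces smaller concentric balls $B\Subset B^+$ and solutions $u_m=u_1^{f_m}$ with $f_m\in H^{1/2}_{00}(\Gamma)$ satisfying \eqref{eq:localized-potential-estimates}. We shrink so that $B\subset B'$; this is harmless, since the lemma only shrinks balls around $x_0$. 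Hence, for $\varepsilon$ small,
\[
  \int_B A\nabla u_m\cdot\nabla u_m\dd x
  \ge m^2\int_B A\nabla v\cdot\nabla v\dd x-Cm
  \ge m^2\tfrac c2|B|-Cm .
\]

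The contradiction comes from the left inequality in Lemma~\ref{lem:loewner-monotonicity}, combined with \eqref{eq:loewner-data}:
\[
  \int_\Omega A\nabla u_m\cdot\nabla u_m\dd x\le0 .
\]
We split $\Omega=D\cup(\Omega\setminus D)$. On $\Omega\setminus D$ the integrand is nonnegative because $A\succeq0$ there, so that part of the integral is at least the integral over $B$. On $D$ the integral is bounded in absolute value by $\|A\|_\infty\|u_m\|_{H^1(D)}^2\le Cm^{-2}$. Putting these together gives
\[
  m^2\tfrac c2|B|-Cm-Cm^{-2}\le0,
\]
which fails for large $m$. Hence $A=0$ on $\Omega\setminus\overline D$, and by continuity on $\Omega\setminus D$.

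This argument never uses a sign of $A$ inside $D$; the smallness of $u_m$ on $D$ is what removes that region. This is exactly why only the one-sided order outside $D$ is needed. The main point needing care is to use only the $\gamma_1$-solutions $u_1^{f}$, so that the \emph{lower} bound in \eqref{eq:loewner-monotonicity} applies and no control of $u_2^{f_m}$ is required. The other point is the topological hypothesis $\Omega\setminus\overline D$ connected, which is what Lemma~\ref{lem:loewner-localized-potentials} needs. In the $A\preceq0$ case we instead use the upper bound in \eqref{eq:loewner-monotonicity} with $\gamma_2$-localized potentials.
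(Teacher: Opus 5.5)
Your proof is correct and follows the paper's argument essentially step for step. You use a $\gamma_1$-probe at a point $x_0\in\Omega\setminus\overline D$ where $\xi^TA(x_0)\xi>0$, localized potentials from Lemma~\ref{lem:loewner-localized-potentials} that grow on a small ball and vanish on $D$, and a contradiction with the lower bound in \eqref{eq:loewner-monotonicity}. The case $A\preceq0$ is handled through $\gamma_2$ and the upper bound (equivalently, by swapping the indices), as in the paper; your cross-term bound $-Cm$ is cruder than the paper's $O(1)$ but still suffices.
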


\begin{proof}
Assume first that $A\succeq0$ on $\Omega\setminus D$ and that \eqref{eq:exterior-one-sided-zero} fails.
By continuity, there are $x_0\in\Omega\setminus\overline D$ and $\xi\in\R^n$ such that
\[
  \xi^TA(x_0)\xi>0.
\]
Apply Lemma~\ref{lem:loewner-local-probe} to $\gamma_1$ with $\varepsilon$ small enough.
After shrinking the resulting balls inside $\Omega\setminus\overline D$, continuity of $A$ and the strict inequality at $x_0$ give
\[
  A(x)\nabla v(x)\cdot\nabla v(x)>0\qquad(x\in B).
\]
In particular,
\begin{equation}
  I:=\int_BA\nabla v\cdot\nabla v\dd x>0.
  \label{eq:local-positive-energy}
\end{equation}
Any further shrinking required in Lemma~\ref{lem:loewner-localized-potentials} preserves this positivity.
That lemma therefore produces global $\gamma_1$-harmonic functions $u_m$ satisfying \eqref{eq:localized-potential-estimates}.
It follows from \eqref{eq:local-positive-energy} that
\begin{align}
  \int_BA\nabla u_m\cdot\nabla u_m\dd x&=m^2I+O(1),
  \label{eq:localized-positive}\\
  \int_DA\nabla u_m\cdot\nabla u_m\dd x&\ge-Cm^{-2},
  \label{eq:localized-core}
\end{align}
where $C$ is independent of $m$.
Because $A\succeq0$ on $\Omega\setminus D$, the integrand is nonnegative on $\Omega\setminus(D\cup B)$.
Consequently,
\[
  \int_\Omega A\nabla u_m\cdot\nabla u_m\dd x>0
\]
for all sufficiently large $m$.
On the other hand, \eqref{eq:loewner-data} and the first inequality in \eqref{eq:loewner-monotonicity} give
\[
  \int_\Omega A\nabla u_m\cdot\nabla u_m\dd x\le0,
\]
a contradiction.
This proves \eqref{eq:exterior-one-sided-zero} for $A\succeq0$; equality on $\partial D$ follows by continuity.

If $A\preceq0$ on $\Omega\setminus D$, apply Lemmas~\ref{lem:loewner-local-probe} and \ref{lem:loewner-localized-potentials} to $\gamma_2$ and $-A$.
The resulting solutions make
\[
  \int_\Omega A\nabla u_m\cdot\nabla u_m\dd x<0
\]
for large $m$, whereas \eqref{eq:loewner-data} and the second inequality in \eqref{eq:loewner-monotonicity} require this integral to be nonnegative.
This proves the negative-semidefinite case.
\end{proof}

\subsection{Continuous layer stripping}

The one-layer result becomes global because the ordering assumption is available immediately inside every level surface.
The sign is allowed to change from one layer to the next.

\begin{lemma}\label{lem:loewner-layer-stripping}
Under the conductivity, data, foliation, topology, and ordering assumptions of Theorem~\ref{thm:loewner}, one has $\gamma_1=\gamma_2$ on $\Omega$.
\end{lemma}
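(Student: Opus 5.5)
I will run a continuity (sup) argument along the foliation parameter. Define
\[
  s^*=\sup\{s\in[0,T]:\ A=0\ \text{on }\{x\in\overline\Omega:\rho(x)\le s\}\}.
\]
The set of such $s$ is nonempty and closed under limits: at $s=0$ only $\partial\Omega$ is involved, and it is handled by the first step below. Continuity of $A$ gives $A=0$ on $\{\rho\le s^*\}$. The goal is to show $s^*=T$. Once that holds, $A=0$ on $\{\rho<T\}$. The remaining set $\{\rho=T\}$ has empty interior by \eqref{eq:loewner-topology}, so $\{\rho<T\}$ is dense and continuity gives $A=0$ on $\Omega$.

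\textbf{Step 1: the sets $D_s$ have the right properties.} For $0<s<T$ the level $s$ is regular, because $\dd\rho\ne0$ on $\{0\le\rho<T\}$. Hence $D_s=\{\rho>s\}$ is a smooth open set. It is compactly contained in $\Omega$ and $\Omega\setminus\overline{D_s}$ is connected, both by \eqref{eq:loewner-topology}. These are exactly the hypotheses of Lemma~\ref{lem:loewner-exterior}.

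\textbf{Step 2: the inductive step.} Suppose $s^*<T$. Apply the ordering hypothesis \eqref{eq:loewner-order} at $s=s^*$; this gives $b=b_{s^*}>s^*$ and a sign $\sigma=\sigma_{s^*}$. Choose any $s'\in(s^*,b)$ with $s'<T$. Then $\sigma A\succeq0$ on $\Omega\setminus D_{s'}=\{\rho\le s'\}$, for the following reasons:
\begin{itemize}
\item on $\{s^*<\rho<b\}$ this is the hypothesis;
\item on $\{\rho\le s^*\}$ we already have $A=0$;
\item on the boundary piece $\{\rho=s'\}\subset\{s^*<\rho<b\}$ it is covered by the first item.
\end{itemize}
Lemma~\ref{lem:loewner-exterior}, applied with $D=D_{s'}$ in the case $A\succeq0$ or $A\preceq0$ according to $\sigma$, then gives $A=0$ on $\{\rho\le s'\}$. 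This contradicts the maximality of $s^*$. The sign is allowed to change from layer to layer because the lemma is applied afresh at each stage.

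\textbf{Step 3: the starting case $s^*=0$.} This case needs separate care, because $\{\rho\le0\}=\partial\Omega$. I take $s=0$ in \eqref{eq:loewner-order}. For small $s'\in(0,b_0)$ the layer $\{0<\rho\le s'\}$ has a fixed sign. By continuity the same sign holds on $\partial\Omega$, so Step 2 applies verbatim with the set $\{\rho\le s^*\}$ replaced by $\partial\Omega$.

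The main obstacle is checking that Lemma~\ref{lem:loewner-exterior} applies exactly as stated. Its proof places the test point $x_0$ in $\Omega\setminus\overline D$ and uses $A\succeq0$ on all of $\Omega\setminus D$, including the region near $\partial\Omega$ and near $\Gamma$. My plan supplies this sign on the full exterior $\{\rho\le s'\}$, which is why the induction records vanishing on the entire closed exterior rather than on a single layer. I also need smoothness of $\partial D_{s'}$, which is the regular-value property in Step 1, and nothing else. The final statements then follow directly:
\begin{itemize}
\item $\gamma_j=|g_j|^{1/2}g_j^{-1}$ determines $g_j$ when $n\ge3$, by the determinant identity used in Proposition~\ref{prop:conductivity-formulation};
\item the gauge version follows by replacing $g_2$ with $F^*g_2$, whose partial DN form on $\Gamma$ is unchanged since $F|_\Gamma=\Id$.
\end{itemize}
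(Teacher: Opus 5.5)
Your proof is correct and follows the paper's argument essentially verbatim. The shared structure is a base step at $s=0$ via Lemma~\ref{lem:loewner-exterior}, then a supremum argument in which the one-sided order just inside the current level, combined with the already-recovered vanishing, gives a one-sided sign on the whole exterior $\{\rho\le s'\}$, and finally density of $\{\rho<T\}$ to conclude. One bookkeeping remark: nonemptiness of your set is not automatic at $s=0$, since vanishing of $A$ on $\partial\Omega$ is not known a priori, but your Step~3 is exactly the paper's base step and supplies it.
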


\begin{proof}
Put $A=\gamma_1-\gamma_2$.
Apply \eqref{eq:loewner-order} with $s=0$ and choose $b\in(0,b_0)$.
The order holds on $\{0<\rho<b_0\}$ and extends by continuity to $\{0\le\rho\le b\}$.
Since $\dd\rho\ne0$ on $\{\rho=b\}$, the set $D_b$ has smooth boundary; by \eqref{eq:loewner-topology}, it is compactly contained in $\Omega$ and has connected exterior.
Lemma~\ref{lem:loewner-exterior} therefore gives
\begin{equation}
  A=0\qquad\text{on }\{\rho\le b\}.
  \label{eq:initial-layer}
\end{equation}
Define
\[
  \mathcal S=\{a\in[0,T):A=0\text{ on }\{\rho\le a\}\},
  \qquad
  \tau=\sup\mathcal S.
\]
By \eqref{eq:initial-layer}, $\tau>0$.
If $\tau<T$, continuity gives $A=0$ on $\{\rho\le\tau\}$.
Indeed, choose $a_k\in\mathcal S$ with $a_k\uparrow\tau$; the union of the recovered sets contains $\{\rho<\tau\}$, and continuity adds the level $\{\rho=\tau\}$.

Apply \eqref{eq:loewner-order} at $s=\tau$ and choose $b\in(\tau,b_\tau)$.
The tensor $\sigma_\tau A$ vanishes on $\{\rho\le\tau\}$ and is positive semidefinite on $\{\tau<\rho\le b\}$, where the endpoint $\rho=b$ is included by continuity.
Thus $\sigma_\tau A\succeq0$ on the entire exterior $\{\rho\le b\}$ of $D_b$.
As above, $D_b$ is smooth, compactly contained, and has connected exterior.
Lemma~\ref{lem:loewner-exterior} gives $A=0$ on $\{\rho\le b\}$, so $b\in\mathcal S$, contradicting $b>\tau=\sup\mathcal S$.
Thus $\tau=T$ and $A=0$ on $\{\rho<T\}$.
The complement of this set is $\{\rho=T\}$, which has empty interior by \eqref{eq:loewner-topology}.
Hence $\{\rho<T\}$ is dense in $\Omega$, and continuity gives $A=0$ on all of $\Omega$.
\end{proof}

\subsection{Metric interpretation and completion of the proof}

\begin{lemma}\label{lem:loewner-metric-interpretation}
Let $n\ge3$ and let $g$ be a smooth metric on $\overline\Omega$ with conductivity density $\gamma_g=|g|^{1/2}g^{-1}$.
Then $g$ is determined pointwise by $\gamma_g$ through
\begin{equation}
  g=(\det\gamma_g)^{1/(n-2)}\gamma_g^{-1}.
  \label{eq:metric-from-density}
\end{equation}
Moreover, if $F:\overline\Omega\to\overline\Omega$ is a smooth diffeomorphism with $F|_\Gamma=\Id$, then
\begin{equation}
  \Lambda_{\gamma_{F^*g}}^\Gamma=\Lambda_{\gamma_g}^\Gamma.
  \label{eq:loewner-gauge-invariance}
\end{equation}
\end{lemma}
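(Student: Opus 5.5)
The lemma has two parts: a pointwise algebraic inversion and a gauge-invariance identity. I would prove them in that order.

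For \eqref{eq:metric-from-density}, work in the fixed Euclidean coordinates and take determinants in $\gamma_g=|g|^{1/2}g^{-1}$. This gives $\det\gamma_g=|g|^{n/2}|g|^{-1}=|g|^{(n-2)/2}$. Since $n\ge3$, the exponent is nonzero, so $|g|^{1/2}=(\det\gamma_g)^{1/(n-2)}$. Inverting $\gamma_g$ gives $\gamma_g^{-1}=|g|^{-1/2}g$, and multiplying by $|g|^{1/2}$ yields \eqref{eq:metric-from-density}. This is the same computation as in Proposition~\ref{prop:conductivity-formulation}, run in reverse, and positivity of all determinants makes the fractional powers unambiguous.

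For \eqref{eq:loewner-gauge-invariance}, I would use the variational (weak) form \eqref{eq:loewner-local-form} rather than normal derivatives. That way no boundary unit normal has to be tracked, and $F$ only needs to fix $\Gamma$, not the rest of the boundary.

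1. \emph{Energy identity.} The change-of-variables formula for tensor densities, namely \eqref{eq:50} applied to $\gamma_{F^*g}=F^*\gamma_g$, gives for $v,\phi\in H^1(\Omega)$
\[
  \int_\Omega\gamma_{F^*g}\nabla(v\circ F)\cdot\nabla(\phi\circ F)\dd x
  =\int_\Omega\gamma_g\nabla v\cdot\nabla\phi\dd y .
\]
This is the coordinate form of $\int\langle\dd(v\circ F),\dd(\phi\circ F)\rangle_{F^*g}\dd V_{F^*g}=\int\langle\dd v,\dd\phi\rangle_g\dd V_g$. The factor $|\det DF|$ coming from the substitution $y=F(x)$ cancels exactly the Jacobian in the density transformation.

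2. \emph{Solutions correspond.} Since $F$ is a diffeomorphism of $\overline\Omega$, composition with $F$ maps $H^1(\Omega)$ onto itself and $H_0^1(\Omega)$ onto itself. Step~1 then shows that $u$ is $\gamma_g$-harmonic if and only if $u\circ F$ is $\gamma_{F^*g}$-harmonic.

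3. \emph{Boundary traces.} Take $f\in H_{00}^{1/2}(\Gamma)$. The trace of $u^f_{\gamma_g}\circ F$ is $f\circ F|_{\partial\Omega}$. We have $F(\Gamma)=\Gamma$ with $F|_\Gamma=\Id$, and $F$ maps $\partial\Omega\setminus\Gamma$ into $\partial\Omega\setminus\Gamma$ by bijectivity on the boundary. Hence this trace is again $f$, supported in $\overline\Gamma$ and zero outside. For smooth compactly supported $f$ this is immediate, and density extends it to $H_{00}^{1/2}(\Gamma)$. Uniqueness of the Dirichlet problem then gives $u^f_{\gamma_{F^*g}}=u^f_{\gamma_g}\circ F$.

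4. \emph{Equality of the forms.} For $h\in H_{00}^{1/2}(\Gamma)$ with extension $\widetilde h$, the function $\widetilde h\circ F$ is an extension of $h$ by the same trace argument. Plugging these into \eqref{eq:loewner-local-form} and applying Step~1 gives equality of the two bilinear forms, which is \eqref{eq:loewner-gauge-invariance}.

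The only delicate point is the claim in Step~3 that the trace of $f\circ F$ equals $f$ in $H_{00}^{1/2}(\Gamma)$. This needs $F$ to preserve $\partial\Omega\setminus\overline\Gamma$ as a set. That follows because a diffeomorphism of $\overline\Omega$ maps $\partial\Omega$ bijectively onto itself and is the identity on $\Gamma$, so by continuity it fixes $\overline\Gamma$ as well; for smooth compactly supported $f$ the identity then holds pointwise on $\partial\Omega$, and density finishes the argument.
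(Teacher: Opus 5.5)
Your proof is correct and follows the paper's argument. The inversion formula comes from the same determinant computation $\det\gamma_g=|g|^{(n-2)/2}$, and the gauge identity from the change-of-variables energy identity together with the observation that $F|_{\partial\Omega}$ is a bijection fixing $\Gamma$, so traces supported in $\Gamma$ are unchanged by composition with $F$. The only difference is cosmetic: the paper states the quadratic energy identity and then polarizes, whereas you work with the bilinear form directly and spell out the correspondence of solutions and the density extension to $H_{00}^{1/2}(\Gamma)$.
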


\begin{proof}
Taking determinants in $\gamma_g=|g|^{1/2}g^{-1}$ gives
\[
  \det\gamma_g=|g|^{(n-2)/2}.
\]
Since $n\ge3$, this identity can be inverted, and substitution gives \eqref{eq:metric-from-density}.

For the gauge statement, let $u$ solve the conductivity equation for $g$ and let $v=u\circ F$.
The change-of-variables formula for the Dirichlet energy gives
\[
  \int_\Omega\gamma_{F^*g}\nabla v\cdot\nabla v\dd x
  =\int_\Omega\gamma_g\nabla u\cdot\nabla u\dd x.
\]
The restriction of $F$ to $\partial\Omega$ is a bijection, and $F|_\Gamma=\Id$ therefore implies
$F(\partial\Omega\setminus\Gamma)=\partial\Omega\setminus\Gamma$.
Hence a boundary trace supported in $\Gamma$ is unchanged by composition with $F$, even though $F$ need not fix the inaccessible boundary.
Polarization of the energy identity proves \eqref{eq:loewner-gauge-invariance}.
\end{proof}

\begin{proof}[Proof of Theorem~\ref{thm:loewner}]
Lemma~\ref{lem:loewner-monotonicity} turns equality of the partial Dirichlet-to-Neumann maps into the two energy inequalities \eqref{eq:loewner-monotonicity}.
Lemma~\ref{lem:loewner-runge}, together with the local probes of Lemma~\ref{lem:loewner-local-probe}, produces the localized potentials in Lemma~\ref{lem:loewner-localized-potentials}.
These solutions concentrate energy in any direction in which the contrast is nonzero while making the energy in the unrecovered core negligible.
Lemma~\ref{lem:loewner-exterior} therefore shows that a one-sided Loewner order on an exterior region forces the contrast to vanish there.

The ordering assumption \eqref{eq:loewner-order} supplies such a sign immediately inside every level surface of $\rho$.
Lemma~\ref{lem:loewner-layer-stripping} starts at the boundary, applies the one-layer recovery successively, and uses the empty-interior condition on $\{\rho=T\}$ to conclude
\[
  \gamma_1=\gamma_2\quad\text{on }\Omega.
\]
This proves \eqref{eq:loewner-conclusion}; the sign may change between layers because each application of Lemma~\ref{lem:loewner-exterior} is independent.

If $n\ge3$ and $\gamma_j=\gamma_{g_j}$ in the same fixed coordinates, formula \eqref{eq:metric-from-density} gives $g_1=g_2$.
For the gauge-equivalent formulation, Lemma~\ref{lem:loewner-metric-interpretation} gives
\[
  \Lambda_{\gamma_{F^*g_2}}^\Gamma
  =\Lambda_{\gamma_{g_2}}^\Gamma
  =\Lambda_{\gamma_{g_1}}^\Gamma.
\]
Applying the conductivity conclusion to the ordered pair $\gamma_{g_1}$ and $\gamma_{F^*g_2}$ yields equality of these densities, and \eqref{eq:metric-from-density} then gives $g_1=F^*g_2$.
\end{proof}

\section{Concluding perspective}\label{sec:conclusion}

The results are organized around a tradeoff among regularity, geometric structure, and boundary access.
On a general compact manifold, the Roumieu Denjoy--Carleman assumptions retain the identity principle needed to pass from smooth boundary determination to a global Poisson-embedding identification.
Corollary~\ref{cor:global-quasianalytic-local} shows that the same mechanism starts from boundary values supported and observed on $\Gamma$.
When a global normal geometry is prescribed, continuation is needed only in the normal variable; under tangential homogeneity, Fourier reduction and inverse spectral theory remove quasianalytic continuation altogether and give a $C^\infty$ full-data theorem.

Theorem~\ref{thm:loewner} supplies a different $C^\infty$ mechanism for restricted boundary access.
Here the limitation is structural rather than regularity-based: the prescribed foliation and the layerwise one-sided Loewner order allow monotonicity, localized potentials, and continuous layer stripping to propagate equality from the measured boundary into the interior.
The theorem therefore gives global equality of the two $C^\infty$ conductivities under its stated hypotheses, including semidefinite contrasts of deficient rank and changes of sign between layers.

These statements do not resolve the unrestricted $C^\infty$ anisotropic Calder\'on problem, nor do the partial-data arguments cover cross or disjoint input--output sets.
For the quasianalytic partial-data result, the interior isometry extends to the metric completion, but the measurements fix the boundary marking only on $\Gamma$; the inaccessible boundary is not assigned a pointwise marking by the data.
Thus the four theorems and the partial-data corollary should be read as distinct propagation principles, rather than as claims toward a single most-general uniqueness theorem.

\section*{Acknowledgments}

The work of Y. Jiang was supported by the Hong Kong RGC Project JRFS2627-1S06.
The work of H. Liu was supported by the Hong Kong RGC General Research Funds (projects 11311122, 12301420, and 11300821).
The authors acknowledge the use of AI tools.
All mathematical arguments and proofs in the final manuscript were checked and written by the authors.

\begingroup
\hfuzz=2pt
\bibliographystyle{plain}
\bibliography{ref14}
\endgroup

\end{document}